\newtheorem{theorem}[equation]{Theorem}
\newtheorem{lemma}[equation]{Lemma}
\newtheorem{prop}[equation]{Proposition}
\newtheorem{definition}[equation]{Definition}
\newtheorem{remark}[equation]{Remark}
\newtheorem{assumption}[equation]{Assumption}
\numberwithin{equation}{section}
\newcommand{\R}{\mathbb{R}}
\newcommand{\C}{\mathbb{C}}
\newcommand{\Z}{\mathbb{Z}}
\newcommand{\Sph}{\mathbb{S}}
\newcommand{\T}{\mathbb{T}}
\newcommand{\geuc}{g_{_E}}
\newcommand{\gsph}{g_{_S}}
\newcommand{\kC}{k_{_C}}
\newcommand{\SigmaC}{\Sigma_{_C}}
\newcommand{\mC}{m_{_C}}
\renewcommand{\Re}{\operatorname{Re}}
\renewcommand{\Im}{\operatorname{Im}}
\newcommand{\stab}{\operatorname{Stab}}
\newcommand{\Grp}{\mathscr{G}} 
\newcommand{\Grefl}{\mathscr{G}_{refl}}
\newcommand{\Gsym}{\mathscr{G}_{sym}}
\newcommand{\Hgrp}{\mathscr{H}} 
\newcommand{\Kgrphat}{\widehat{\mathscr{K}}}
\newcommand{\vhat}{\widehat{V}_k}
\newcommand{\hhat}{\widehat{H}}
\newcommand{\Hcal}{\mathcal{H}}
\newcommand{\Lcal}{\mathcal{L}}
\newcommand{\Rcal}{\mathcal{R}}
\newcommand{\Qcal}{\mathcal{Q}}
\newcommand{\Sk}{\mathcal{S}_k}
\newcommand{\Stildekm}{\widetilde{\mathcal{S}}_{k,m}}
\newcommand{\SkC}{\mathcal{S}_{\kC}}
\newcommand{\Ck}{\mathcal{C}_k}
\newcommand{\Ckm}{\mathcal{C}_{k,m}}
\newcommand{\Ckmp}{\mathcal{C}'_{k,m}}
\newcommand{\Cmin}{\mathcal{C}_{min}}
\newcommand{\Cunder}{{\underline{\mathcal{C}}}}
\newcommand{\What}{\widehat{\mathcal{W}}}
\newcommand{\Wcal}{\mathcal{W}}
\newcommand{\xkm}{\widehat{X}_{k,m}}
\newcommand{\xkCm}{\widehat{X}_{\kC,m}}
\newcommand{\xC}{X_{_C}}
\newcommand{\phiC}{\phi_{_C}}
\newcommand{\intcirc}{\mathcal{C}}
\newcommand{\comptor}{\mathcal{T}}
\newcommand{\xx}{\ensuremath{\mathrm{x}}}
\newcommand{\yy}{\ensuremath{\mathrm{y}}}
\newcommand{\zz}{\ensuremath{\mathrm{z}}}
\newcommand{\rr}{\ensuremath{\mathrm{r}}}
\newcommand{\sss}{\ensuremath{\mathrm{s}}}
\newcommand{\ttt}{\ensuremath{\mathrm{t}}}
\newcommand{\ghat}{{\widehat{g}}}
\newcommand{\Ahat}{{\widehat{A}}}
\newcommand{\Lhat}{{\widehat{L}}}
\newcommand{\Grphat}{{\widehat{\Grp}}}
\newcommand{\Rhat}{{\widehat{\Rcal}}}
\newcommand{\Hhat}{{\widehat{\mathscr{H}}}}
\newcommand{\Rtowkn}{{\Rhat_{\Sk,n}}}
\newcommand{\RtowkmC}{{\Rhat_{{\Sk}_{_C},\frac{\mC}{km}}}}
\newcommand{\Rtor}{{\Rhat^{tor}}}
\newcommand{\rot}{\mathsf{R}}
\newcommand{\rote}{\widehat{\mathsf{R}}}
\newcommand{\transe}{\widehat{\mathsf{T}}}
\newcommand{\refle}{\widehat{\underline{\mathsf{R}}}}
\newcommand{\abs}[1]{\left\lvert#1\right\rvert}
\newcommand{\norm}[1]{\left\|#1\right\|}
\newcommand{\cutoff}[2]{\psi\left[ #1,#2 \right]}
\title{Minimal surfaces in the 3-sphere by desingularizing intersecting Clifford tori}
\author{Nikolaos Kapouleas}
\address{Department of Mathematics, Brown University, Providence, RI 02912}
\email{nicos@math.brown.edu}
\author{David Wiygul}
\address{Department of Mathematics, University of California, Irvine, CA 92697}
\email{dwiygul@uci.edu}
\begin{document}

\maketitle

\begin{abstract}
For each integer $k \geq 2$, we apply gluing methods to construct sequences
of minimal surfaces embedded in the round $3$-sphere. We produce two types of sequences,
all desingularizing collections of intersecting Clifford tori.
Sequences of the first type converge 
to a collection of $k$ Clifford tori intersecting with maximal symmetry along these two circles. 
Near each of the circles, after rescaling, the sequences converge smoothly on compact subsets
to a Karcher-Scherk tower of order $k$.
Sequences of the second type desingularize a collection of the same $k$ Clifford tori 
supplemented by an additional Clifford torus equidistant from the original two circles of intersection, 
so that the latter torus orthogonally intersects each of the former $k$ tori along a pair of 
disjoint orthogonal circles, 
near which the corresponding rescaled sequences converge to a singly periodic Scherk surface.
The simpler examples of the first type resemble surfaces constructed by Choe and Soret \cite{CS} 
by different methods where the number of handles desingularizing each circle is the same. 
There is a plethora of new examples which are more complicated and on which the number of handles for the two circles differs. 
Examples of the second type are new as well. 
\end{abstract}

\section{Introduction}

\subsection*{The general framework}
$\phantom{ab}$
\nopagebreak

After totally geodesic $2$-spheres, Clifford tori represent the next simplest minimal 
embeddings of closed surfaces in the round unit $3$-sphere $\Sph^3$. In fact Marques and 
Neves \cite{MN}, 
in their proof of the Willmore conjecture, have identified Clifford 
tori as the unique area minimizers among all embedded closed minimal surfaces of genus 
at least one, 
and Brendle \cite{Brendle} 
has shown they are the only embedded minimal tori, 
affirming a conjecture of Lawson. 
The first examples of higher-genus minimal surfaces in $\Sph^3$ were produced by Lawson himself 
\cite{Lawson}, and further examples were found later by Karcher, Pinkall, 
and Sterling \cite{KPS}. Both constructions proceed by solving Plateau's problem 
for suitably chosen boundary and extending the solution to a closed surface by reflection.

In this article we carry out certain constructions 
by using gluing techniques by singular perturbation methods.  
One begins with a collection of known embedded minimal surfaces. 
These ingredients are then glued together to produce a new embedded 
surface, called the initial surface, having small but nonvanishing 
mean curvature introduced in gluing. 
The construction is successful when the initial surfaces are close to a singular limit. 
The construction is then completed by perturbing the surface to minimality without sacrificing embeddedness.
Of course the size of the mean curvature and the feasibility of perturbing 
the surface so as to eliminate it both depend crucially on the design of the initial surface. 

Gluing methods have been applied extensively
and with great success in gauge theories by Donaldson, Taubes, and others.
In many geometric problems similar to the one studied in this article obstructions appear to solving the linearized equation.  
An extensive methodology has been developed to deal with this difficulty in a large class of geometric problems,  
starting with \cite{schoen}, \cite{kapouleas:annals}
and with further refinements in \cite{KapWe}.  
We refer to \cite{KapClay} for a general discussion of this gluing methodology and \cite{KapSchoen} 
for a detailed general discussion of doubling and desingularization constructions for minimal surfaces. 
In this article, however, 
we limit ourselves to constructions of unusually high symmetry 
(except in section \ref{further}) and this way we avoid these difficulties entirely. 

The first desingularization construction by gluing methods for minimal surfaces 
with intersection curves which are not straight lines was carried out 
in \cite{KapJDG} and serves as a prototype (see for example \cite{Nguyen,KKM,KapLi})  
for desingularizations of rotationally invariant surfaces 
with transverse intersections without triple or higher points. 
(An independent construction by Traizet \cite{Tr} has straight lines as intersections.) 
For one earlier application of the gluing methodology in the context of minimal surfaces in $\Sph^3$ see \cite{KY}, 
where a ``doubling'' construction of the Clifford torus is carried out; 
this work has been extended in \cite{Wiy} 
for ``stackings'' of the Clifford torus and in \cite{KapI} for doublings of the equatorial two-sphere. 
The present construction also glues tori, 
but by desingularization rather than doubling. 

The idea of a desingularization construction for intersecting minimal surfaces in a Riemannian three-manifold 
is to start with a collection of minimal surfaces intersecting along some curves 
and to produce a single embedded minimal surface by desingularizing the curves of intersection. 
Assuming transverse intersection, this is accomplished, at the level of the initial surface, 
through the replacement of a tubular neighborhood of each component curve of the intersection set by a surface 
which on small scales approximates a minimal surface in Euclidean space desingularizing the intersection of a collection of planes along a single line. 
In prior desingularization constructions the appropriate models for these desingularizing surfaces 
were furnished by the classical Scherk towers of \cite{Scherk}, 
which desingularize the intersection of two planes. 
One novelty of the present article is our use of the more general Karcher-Scherk towers, 
introduced in \cite{Kar}, which come in families desingularizing any number of intersecting planes and so accommodate curves of intersection whose complements, 
in small neighborhoods of the curves, contain more than four components.
Note that although having more than two minimal surfaces intersect along a curve is not a generic situation, 
it can happen in rotationally invariant cases as for example in the case of coaxial catenoids.  
Extending the results of \cite{KapJDG} to such situations for example is an interesting but difficult problem 
because one would have to use the full family of Karcher-Scherk towers as studied in \cite{perez2007classification}. 

A motivation for our construction is the observation \cite[Section 2.7]{Kar} 
that Lawson's surfaces may be regarded as desingularizations of a collection 
of great $2$-spheres intersecting with maximal symmetry along a common equator. 
In this article we pursue analogous constructions with tori instead of spheres 
as proposed in \cite[Section 4, page 300]{KapSchoen}. 
Pitts and Rubinstein have described one class of surfaces (item 10 on Table 1 of \cite{PR}), similar to some of our surfaces,  
to be obtained by min-max methods. 
Recently Choe and Soret \cite{CS} have produced examples by solving Plateau's problem for a suitably selected boundary, 
in the spirit of \cite{Lawson}. 
Their examples resemble the simpler examples we construct.  
(To prove they are the same one would have to prove that the solution of the Plateau problem is unique; see remark \ref{r:unique}). 
Our construction has been developed independently and is more general in ways we describe below,  
and our strategy is quite different, 
based as we already mentioned on gluing techniques by singular perturbation methods. 
On the other hand our methods work only for high-genus surfaces. 

To outline, 
we construct two infinite families of embedded minimal surfaces in $\Sph^3$. 
The first family consists of desingularizations of a configuration 
$\Wcal_k$ (see \ref{E:Wk}) 
of $k\ge2$ Clifford tori 
intersecting symmetrically along a pair of disjoint great circles $C_1$ and $C_2$ 
which lie on two orthogonal two-planes in $\R^4$. 
The second family consists of desingularizations of a configuration 
$\Wcal'_k$ (see \ref{E:Wk}) 
which is the previous one augmented by the Clifford torus which is equidistant from $C_1$ and $C_2$. 
In both cases the construction is based on choosing ``scaffoldings'', 
that is unions of great circles contained in the given configuration, 
and which we demand to be contained in the minimal surfaces we construct. 
Moreover, 
reflections with respect to the great circles contained in the scaffoldings are 
required to be symmetries of our constructions. 
We denote the scaffoldings we choose by 
$\mathcal{C}_{k,m}\subset\Wcal_k$   
or 
$\mathcal{C}'_{k,m}\subset\Wcal'_k$  
(see \ref{scaff}).

To construct the initial surfaces we replace tubular neighborhoods of the
intersection circles with surfaces modeled on appropriately scaled and
truncated maximally symmetric Karcher-Scherk towers. Towers with $2k$
ends are used along $C_1$ and $C_2$, while classical Scherk towers with $4$
ends are used along other circles of intersection 
(present only in 
$\Wcal'_k$). 
The replacements are made so that each initial surface
is closed and embedded, contains the applicable scaffolding, and is
invariant under reflection through every scaffold circle 
(see Definition \ref{initsdef}). 
These initial surfaces are perturbed then to minimality in a way which respects the reflections,  
so the surfaces produced are closed embedded minimal and still contain the scaffolding 
(see the Main Theorem \ref{mainthm}). 
Note that Lawson's approach also makes use of a scaffolding. 
Our approach, however, 
gives much more freedom in the number of handles we include in the fundamental domain, 
while in Lawson's method the fundamental domain is a disc so that Plateau's problem can be solved. 
This makes no difference when considering the original construction of Lawson in \cite{Lawson}: 
we expect that the construction with more handles in the fundamental domain will still produce a Lawson surface 
even though it does not a priori impose all the symmetries of the surface. 
When there are more than two circles of intersection involved, 
however, we can choose different numbers of 
handles on each of them and this gives a plethora of new surfaces as in the present constructions 
(see the Main Theorem \ref{mainthm}). 
It seems also rather daunting to try to construct even the simplest of our surfaces desingularizing 
$\Wcal'_k$ by Lawson's method. 

The present constructions motivate two important new directions for further study. 
First, what other similar desingularization constructions can be carried out in cases 
where there are obstructions due to less symmetry? 
One has to deal then with the obstructions in the usual way by introducing smooth families of initial surfaces 
with the parameters corresponding to the obstructions as in earlier work 
(see \cite{KapJDG,KapClay,KapSchoen,Kap:compact}). 
We discuss this question in Section \ref{further} and we provide some partial answers. 
Second, as remarked in the end of \cite[Section 4.2]{KapSchoen}, there are  
various natural questions about rigidity and uniqueness for the surfaces presently constructed which are similar to those asked 
\cite[questions 4.3, 4.4 and 4.5]{KapSchoen} 
about the Lawson surfaces. 
In particular we are currently working to prove that the surfaces desingularizing 
$\Wcal_k$ can not be smoothly deformed to surfaces desingularizing $k$ Clifford tori still 
intersecting along $C_1$ and $C_2$ but with different angles (that is they cannot ``flap their wings''). 
More precisely we hope to prove that even with reduced symmetries imposed so ``flapping the wings'' is allowed, 
there are no new Jacobi fields on our surfaces.

\subsection*{Outline of the approach}
$\phantom{ab}$
\nopagebreak

As we have already mentioned the main difficulty of this construction as compared to earlier 
results is proving that under the symmetries imposed there is no kernel on the Karcher-Scherk towers. 
As for the classical singly periodic Scherk surfaces ($k=2$) \cite{KapJDG}, 
this is achieved by subdividing the surface suitably and applying the Montiel-Ros approach \cite{MR}. 
Our approach is also somewhat different than usual in some other aspects and we employ the high symmetry we have 
available. 

\subsection*{Organization of the presentation}
$\phantom{ab}$
\nopagebreak

In Section 2 we study in sufficient detail the maximally symmetric Karcher-Scherk towers using the 
Enneper-Weirstrass representation and following \cite{Kar}. 
In Section 3 we study the geometry of the Clifford tori and the initial configurations we will 
be using later, their symmetries, and the symmetries and scaffoldings we will impose in our constructions later. 
In Section 4 we discuss in detail the construction of the initial surfaces and we study their geometry. 
In Section 5 we provide estimates for the geometric quantities on the initial surfaces. 
In Section 6 we study the linearized equation and estimate its solutions on the initial surfaces.  
We finally combine these results to establish the main theorem in Section 7. 
Finally in Section 8 we discuss further results using more technology.

\subsection*{Notation and conventions}
Given an open set $\Omega$ of a submanifold immersed in an ambient manifold endowed with metric $g$, an exponent $\alpha \in [0,1)$, and a tensor field $T$ on $\Omega$, possibly taking values in the normal bundle, we define the pointwise H\"{o}lder seminorm
\begin{equation}
  [T]_\alpha(x) = \sup_{y \in B_x} 
  \frac{\abs{T(x)-\tau_{yx}T(y)}_g}{d(x,y)^\alpha},
\end{equation}
where $B_x$ denotes the open geodesic ball, with respect to $g$, 
with center $x$ and radius the minimum of $1$ and the injectivity radius at $x$;
$\abs{\cdot}_g$ denotes the pointwise norm induced by $g$; $\tau_{yx}$ denotes parallel transport, also induced by $g$, 
from $y$ to $x$ along the unique geodesic in $B_x$ joining $y$ and $x$; and $d(x,y)$ denotes the distance between $x$ and $y$.

Given further a continuous positive function $f: \Omega \to \R$ and a nonnegative integer $\ell$, assuming that $T$ is a section of the bundle $E$ over $\Omega$ all of whose order-$\ell$ partial derivatives (with respect to any coordinate system) exist and are continuous, we set
\begin{equation}
  \norm{T: C^{\ell,\alpha}(E,g,f)} = \norm{T}_{\ell,\alpha,f} 
  = \sum_{j=0}^{\ell} \sup_{x \in \Omega} \frac{\abs{D^jT(x)}}{f(x)}
  + \frac{\left[D^\ell T\right]_\alpha(x)}{f(x)},
\end{equation}
where $D$ denotes the Levi-Civita connection determined by $g$.
In case
$E$ is the trivial bundle $\Omega \times \R$, 
instead of $C^{\ell,\alpha}(E,g,f)$ we write $C^{\ell,\alpha}(\Omega,g,f)$.
When $f\equiv 1$, we write just
$C^{\ell,\alpha}(\Omega,g)$, and when $\alpha=0$, we write just $C^\ell(\Omega,g)$.

Additionally, if $\Grp$ is a group acting a on a set $B$ 
and if $A$ is a subset of $B$,
then we refer to the subgroup
  \begin{equation}
  \label{stab}
    \stab_{\Grp}(A):=\{ \mathbf{g} \in \Grp \; | \; \mathbf{g}A \subseteq A \}
  \end{equation}
as the \emph{stabilizer} of $A$ in $\Grp$.
When $A$ is a subset of Euclidean $3$-space (or the round $3$-sphere), we will set
  \begin{equation}
  \label{Gsym}
    \Gsym(A):=\stab_{{Isom}} A,
  \end{equation}
where $Isom=O(3)$ (or $O(4)$).
For a subset $\mathcal{C}$ of Euclidean space (or the round $3$-sphere)
we define
  \begin{equation}
  \label{Grefl}
    \Grefl(\mathcal{C})
  \end{equation}
to be the group generated by reflections with respect to the lines
(or great circles) contained in $\mathcal{C}$.
Here reflection through a great circle can be defined as the restriction
to the $3$-sphere of reflection in $\R^4$ through the $2$-plane containing the circle.

If $\Grp$ is a group of isometries of a Riemannian manifold with a two-sided
immersed submanifold $\Sigma$, 
then we call $\mathbf{g} \in \stab_{\Grp}(\Sigma)$ \emph{even}
if $\mathbf{g}$ preserves the sides of $\Sigma$ 
and \emph{odd} if it exchanges them.
In this two-sided case, sections of the normal bundle
of $\Sigma$ may be represented by functions, on which $\stab_{\Grp} \Sigma$
then acts according to 
$(\mathbf{g}u)(x)=(-1)^{\mathbf{g}}u\left(\mathbf{g}^{-1}x\right)$,
where $(-1)^{\mathbf{g}}$ is defined to be $1$ for $\mathbf{g}$ even
and $-1$ for $\mathbf{g}$ odd.
We append a subscript $\Grp$ to the spaces of functions defined above
to designate the subspace which is
$\stab_{\Grp}(\Sigma)$-equivariant in this sense 
so that for instance
  \begin{equation}
  \label{invfun}
    C^{k,\beta}_{\Grp}(\Sigma,g,f) = \left.
    \left\{u \in C^{k,\beta}(\Sigma,g,f) \; \right| \; 
    u\left(\mathbf{g}^{-1}(x)\right) = 
    (-1)^{\mathbf{g}}u(x) \;
    \forall x \in \Sigma \;\; \forall \mathbf{g} \in \stab_{\Grp}(\Sigma) \right\}.
  \end{equation}

Finally, we make extensive use of cutoff functions,
and for this reason we fix a smooth function $\Psi:\R\to[0,1]$ with the following properties:
\begin{enumerate}[(i)]
\item $\Psi$ is nondecreasing,
\item $\Psi\equiv1$ on $[1,\infty]$ and $\Psi\equiv0$ on $(-\infty,-1]$, and
\item $\Psi-\frac12$ is an odd function.
\end{enumerate}
Given then $a,b\in \R$ with $a\ne b$,
we define a smooth function $\psi[a,b]:\R\to[0,1]$
by
\begin{equation}
\label{Epsiab}
\psi[a,b]=\Psi\circ L_{a,b},
\end{equation}
where $L_{a,b}:\R\to\R$ is the linear function defined by the requirements $L(a)=-3$ and $L(b)=3$.

Clearly then $\psi[a,b]$ has the following properties:
\begin{enumerate}[(i)]
\item $\psi[a,b]$ is weakly monotone,
\item $\psi[a,b]=1$ on a neighborhood of $b$ and $\psi[a,b]=0$ on a neighborhood of $a$, and
\item $\psi[a,b]+\psi[b,a]=1$ on $\R$.
\end{enumerate}

\subsection*{Acknowledgments}

The authors would like to thank Richard Schoen for his continuous support and interest in the results of this article. 
N.K. was partially supported by NSF grants DMS-1105371 and DMS-1405537.

\section{The Karcher-Scherk towers}
In \cite{Kar}
Karcher introduced generalizations of the classical singly periodic Scherk surfaces,
including the maximally symmetric Karcher-Scherk towers of order $k \geq 2$, 
which we will denote by $\Sk$. 
$\Sk$ is a singly periodic, complete minimal surface embedded in Euclidean $3$-space, asymptotic to $k$ planes intersecting at equal angles along a line.
This line, which we call the axis of $\Sk$, is parallel to the direction of periodicity.
The classical singly periodic Scherk tower \cite{Scherk} asymptotic to two orthogonal planes is recovered by taking $k=2$.
Although in this article we will only use 
$\Sk$ in our constructions, 
it is worth mentioning that there is a continuous family 
of singly periodic minimal surfaces with Scherk-type ends 
which has been studied by P{\'e}rez and Traizet in \cite{perez2007classification} 
and in which family $\Sk$ is the most symmetric member.

We proceed to outline the construction of Karcher \cite{Kar}.
Note that $\Sk$, which we will define in detail later, 
differs by a scaling and rotation from the surface described now.
Karcher considered the Enneper-Weierstrass data of
\begin{equation}
\label{WeiData}
\text{Gauss map } \zeta^{k-1} \text{ and height differential } \frac{1}{\zeta^k + \zeta^{-k}}\frac{d\zeta}{\zeta}
\end{equation}
on the closed unit disc in $\C$ punctured at the $2k$\textsuperscript{th} roots of $-1$.
The embedding defined by the data maps the origin to a saddle point, the $k$ line segments joining opposite roots of $-1$ to horizontal lines of symmetry intersecting at equal angles, the $2k$ radii terminating at roots of unity to alternately ascending and descending curves of finite length lying in $k$ vertical planes of symmetry, and the $2k$ circumferential arcs between consecutive roots of $-1$ to curves of infinite length lying alternately in one of two horizontal planes of symmetry.
The union of this region with its reflection through either horizontal plane of symmetry
is a fundamental domain
for the tower under periodic vertical translation.
The images of small neighborhoods of the roots of $-1$ are asymptotic to vertical half-planes,
which bisect the vertical planes of symmetry.

For future reference
the following proposition fills in the details of the above outline and summarizes the basic geometric
properties of $\Sk$,
including its symmetries and asymptotic behavior.
To state the lemma we make a few preliminary definitions.
First we define the sets $\hhat$, a union of horizontal planes,
and $\vhat$, a union of vertical planes, by
  \begin{equation}
    \hhat:=\bigcup_{n \in \Z} \{\zz=n\pi\}
    \quad \mbox{and} \quad
    \vhat:=\bigcup_{j \in \Z} \{ \yy=\xx \tan j\pi/k\},
  \end{equation}
whose intersection is the union of horizontal lines
  \begin{equation}
  \label{Ck}
    \Ck := \hhat \cap \vhat.
  \end{equation}
We enumerate the connected components of the complement of $\hhat \cup \vhat$ by
  \begin{equation}
      B_{k,l,j}:= \left\{ (\rr \cos \theta, \rr \sin \theta, \zz) \; : \;
        \rr \geq 0, \,
        \theta \in \left(\frac{j}{k}\pi, \frac{j+1}{k}\pi\right), \,
        \zz \in (l\pi,(l+1)\pi) \right\} 
  \end{equation}
for each $l,j \in \Z$,
and we also define a partition of
$\R^3 \backslash \left( \hhat \cup \vhat \right)$ into disjoint sets $A_k$ and $A_k'$
given by
  \begin{equation}
  \label{Ak}
      A_k:=\bigcup_{l+j \in 2\Z} B_{k,l,j} \quad \mbox{ and } \quad
      A_k':=\bigcup_{l+j \in 2\Z+1} B_{k,l,j}.
  \end{equation}

To describe the symmetries of $\Sk$ we write $\refle_P$ for reflection in $\R^3$ through the plane $P$,
$\transe_\ell^a$ for translation in $\R^3$ by $a$ units along the directed line $\ell$,
and $\rote_\ell^\phi$ for rotation in $\R^3$ through angle $\phi$
about the directed line $\ell$ (according to the usual orientation conventions).
A horizontal bar over a subset of $\R^3$ denotes its topological closure in $\R^3$,
and angled brackets enclosing a list of elements (or sets of elements) 
of $O(3)$ indicate the subgroup generated by all the elements listed or included in the sets mentioned.

\begin{prop}
\label{tower}
For every integer $k \geq 2$ there is a complete embedded minimal surface $\Sk \subset \R^3$ such that

\begin{enumerate}[(i)]

  \item $\Sk \cap \hhat = \Ck$, $\Sk \backslash \Ck \subset A_k$,
    and every straight line on $\Sk$ is contained in $\Ck$;

  \item for any connected component $B$ of $A_k$
    the intersection $\Sk \cap B$ is an open disc with 
    $\partial \left(\Sk \cap B \right)= \Ck \cap \overline{B}$ (the union of four horizontal rays);

  \item for each non negative integer $m$ the quotient surface
    $\Sk / \left\langle \transe_{\zz \text{-axis}}^{2m\pi} \right\rangle$
    has $2k$ ends and genus $(k-1)(m-1)$;

  \item $\Grefl(\Ck) \subsetneq \Gsym(\Sk) = \Gsym(A_k) = \Gsym(A_k') \subsetneq \Gsym(\Ck)$ 
    (recall \ref{Gsym} and \ref{Grefl})
    and
    $\Gsym(\Sk)$ acts transitively on the set of connected components of $\Sk \backslash \Ck$,
    the set of connected components of $A_k$, and the set of connected components of $A_k'$;

  \item $\Grefl(\Ck)=
    \left\langle 
      \rote_{\zz \text{-axis}}^{2\pi/k}, \,
      \rote_{\xx \text{-axis}}^\pi, \, \transe_{\zz \text{-axis}}^{2\pi}
    \right\rangle$;

  \item $\Gsym(\Sk)
    = 
    \left\langle
      \rote_{\zz \text{-axis}}^{2\pi/k}, \,
      \rote_{\xx \text{-axis}}^\pi, \, \transe_{\zz \text{-axis}}^{2\pi}, \,
      \refle_{\theta=\pi/2k}, \, \refle_{\zz=\pi/2}
    \right\rangle
    = 
    \left\langle
      \Grefl(\Ck), \, 
      \refle_{\theta=\pi/2k}, \, \refle_{\zz=\pi/2}
    \right\rangle $;

  \item $\Gsym(\Ck)
    =
    \Gsym(\Sk) \; \cup \; \rote_{\zz \text{-axis}}^{\pi/k}\Gsym(\Sk)
    =
    \Gsym(\Sk) \; \cup \; \transe_{\zz \text{-axis}}^{\pi}\Gsym(\Sk)$; and

  \item there exists $R_k>0$
    so that
    $\Sk \backslash \{ \sqrt{\xx^2+\yy^2} < R_k \}$ 
    has $2k$ connected components,
    all isometric by the symmetries, exactly one of which lies in the region
    $$
    \{ (\rr \cos \theta, \rr \sin \theta, \zz) \; : \; 
      \rr>0, \, \abs{\theta} < \frac{\pi}{2k}, \, \zz \in \R \},
$$
    and the intersection of this component with $\{\xx \geq R_k\}$
    is the graph
    $$ 
\{(\xx,W_k(\xx,\zz),\zz): \xx \geq R_k, \, \zz \in \R\} 
    $$ 
    over the $\xx\zz$-plane of a function
    $W_k: [R_k,\infty) \times \R \to \R$, 
which decays exponentially in the sense that $\forall j,\ell \ge0$ we have 
    \begin{equation}
    \label{towerdecay}
\abs{\partial_\xx^j \partial_\zz^{\ell} W_k(\xx,\zz)} \leq C(k,j+\ell) \, e^{-\xx}.
    \end{equation} 

\end{enumerate}
\end{prop}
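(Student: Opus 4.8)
\emph{Proof outline.} The plan is to realize $\Sk$, up to a fixed homothety and rotation of $\R^3$, as the complete minimal surface generated from the Enneper--Weierstrass data \eqref{WeiData} by Schwarz reflection, following Karcher \cite{Kar}, and then to read off (i)--(viii) from the explicit formulas together with the reflection group. Writing $dh=\dfrac{\zeta^{k-1}}{\zeta^{2k}+1}\,d\zeta$ and $g=\zeta^{k-1}$ for the height differential and Gauss map of \eqref{WeiData} on the open unit disc $\mathbb{D}\subset\C$, and forming $\phi_1=\tfrac12(g^{-1}-g)\,dh$, $\phi_2=\tfrac i2(g^{-1}+g)\,dh$, $\phi_3=dh$, one checks that these are holomorphic $1$-forms on all of $\mathbb{D}$ --- the order-$(k-1)$ pole of $g^{-1}$ at $\zeta=0$ being cancelled by the order-$(k-1)$ zero of $dh$, which is the only zero of $dh$ in $\mathbb{D}$ --- so the induced metric $\tfrac12(\abs g+\abs g^{-1})\abs{dh}$ is everywhere positive and smooth and $X:=\Re\int^{\zeta}(\phi_1,\phi_2,\phi_3)$ is a well-defined (since $\mathbb{D}$ is simply connected) unbranched conformal minimal immersion. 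At each of the $2k$ boundary points $\zeta_0$ with $\zeta_0^{2k}=-1$ the form $dh$ has a simple pole while $g$ is finite and nonzero, so the metric is complete there and the end is asymptotic to a plane; thus $X(\mathbb{D})$ is a complete minimal ``half fundamental domain.''

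Next I would analyze the boundary and carry out the reflections. The data \eqref{WeiData} is arranged so that along the $k$ diameters joining opposite roots of $-1$, along the $2k$ radii to roots of unity, and along the $2k$ circular arcs between consecutive roots of $-1$, appropriate real or imaginary parts of $g$ and of the primitive of $dh$ vanish; translated through the Weierstrass formulas this identifies the corresponding image curves as, respectively, horizontal straight segments on $X(\mathbb{D})$, planar geodesics lying in $k$ vertical planes through the axis and meeting $X(\mathbb{D})$ orthogonally, and planar geodesics lying in two horizontal planes and meeting $X(\mathbb{D})$ orthogonally. By the Schwarz reflection principle I extend $X(\mathbb{D})$ minimally across each boundary curve --- rotation by $\pi$ about each horizontal segment, reflection through each vertical and each horizontal plane --- and check that the extension closes up with trivial monodromy around the saddle point $\zeta=0$ (where the order-$(k-1)$ zero of $g$ makes $4k$ sectors meet at equal angles, rendering the local reflection group finite) and around every other corner point, while the translations in the group so generated form a rank-one lattice; after a homothety normalizing the vertical period to $2\pi$ and a rotation placing the axis along the $\zz$-axis with the symmetry planes along $\hhat$ and $\vhat$, this produces the connected complete surface $\Sk$, invariant by construction under $\langle\Grefl(\Ck),\ \refle_{\theta=\pi/2k},\ \refle_{\zz=\pi/2}\rangle$. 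To see $\Gsym(\Sk)$ is no larger, note that any symmetry preserves the axis (the common line of the asymptotic planes), hence permutes the $2k$ ends, the planes $\vhat$ and the lines $\Ck$, which together with its action on the two sides of $\Sk$ confines it to a finite extension of $\Grefl(\Ck)$ already exhausted by the generators found; this establishes (iv)--(vii), including the transitivity statements and the presentations of $\Grefl(\Ck)$ and $\Gsym(\Sk)$.

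For embeddedness and the combinatorial structure, I would show that each of the $4k$ fundamental sectors of $\mathbb{D}$ (cut out by the $k$ diameters and $2k$ radii) maps to a graph over a region of one of the vertical symmetry planes, and that the images of distinct sectors, related by the reflections, have pairwise disjoint interiors because the reflections permute a family of closed half-slabs with disjoint interiors; hence $\Sk$ is embedded. With the above normalization $\Sk$ meets each plane of $\hhat$ only along the horizontal lines it contains --- elsewhere the Gauss map inherited from $g$ is non-horizontal --- so $\Sk\cap\hhat=\Ck$, these lines are the only straight lines on $\Sk$, and the alternating pattern of reflections places $\Sk\setminus\Ck$ in $A_k$; this is (i). Each $\Sk\cap B$ is the image of a reflected union of fundamental sectors, a topological disc with boundary the four horizontal rays $\Ck\cap\overline B$; this is (ii).

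Property (iii) follows from an Euler-characteristic count for the natural CW decomposition of the compactification of $\Sk/\langle\transe_{\zz\text{-axis}}^{2m\pi}\rangle$ into the $8km$ images of the fundamental sectors: $4k$ sectors meet at each of the $2m$ saddle vertices, $4$ at each of the $2km$ vertices over roots of unity, and $4m$ at each of the $2k$ end points, so $V=2m+2km+2k$, $E=12km$, $F=8km$, and $\chi=2-2(k-1)(m-1)$ --- genus $(k-1)(m-1)$ with $2k$ ends. For (viii), near a puncture $\zeta_0$ one passes to the conformal coordinate $w$ in which $dh=dw$, so $\operatorname{Re}w\to+\infty$ along the end and $g(w)=\zeta_0^{k-1}+O(e^{-\operatorname{Re}w})$; thus the Gauss map, and after integration the position, converge exponentially to those of the asymptotic vertical half-plane, and re-expressing the component in $\{\abs\theta<\pi/2k\}\cap\{\xx\ge R_k\}$ as the graph of a function $W_k$ over the $\xx\zz$-plane and transferring the exponential bounds to all derivatives --- via interior elliptic (Schauder) estimates for the minimal surface equation --- yields \eqref{towerdecay}. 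The genuinely delicate steps are verifying that the Schwarz reflections close up with trivial monodromy (i.e.\ that the orders in \eqref{WeiData} make all relevant angle conditions hold) and proving global embeddedness; the rest is careful bookkeeping with the Weierstrass formulas and the reflection group, with the exponential estimate \eqref{towerdecay} --- including all derivatives --- being the main additional quantitative input, standard for Scherk-type ends.
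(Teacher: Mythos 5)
Your proposal follows essentially the same route as the paper: realize a fundamental domain from the Enneper--Weierstrass data, read off the symmetries and boundary behavior from the explicit formulas, extend by Schwarz reflection, prove embeddedness directly, and establish the exponential decay of the end by a conformal-coordinate argument. The explicit Euler-characteristic count for (iii) is a useful addition that the paper elides as ``immediate.''

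Two details, however, would not go through as stated. First, the embeddedness step asserts that each fundamental sector of $\mathbb{D}$ maps to a graph over a region of one of the \emph{vertical} symmetry planes. Near the saddle point $\zeta=0$ the Gauss map $\zeta^{k-1}$ is vertical (its stereographic image is the south pole), so the surface there is tangent to a \emph{horizontal} plane and cannot be written as a graph over a vertical plane. The correct assertion, and the one the paper proves by a level-curve argument in the harmonic coordinate $\yy$, is that the image of the sector is a graph over a region of the horizontal plane $\{\zz=0\}$ (the Gauss map on the disc has everywhere nonpositive vertical component). Second, for (viii) you take $w=\int dh$ and claim $\operatorname{Re}w\to+\infty$ along the end. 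But $\operatorname{Re}\int dh = \zz$, and the residue of $dh$ at each root of $-1$ is purely imaginary (one computes it to be $i/k$ at $\omega_1$), so $\operatorname{Re}\int dh$ stays \emph{bounded} near the puncture while the logarithmic divergence appears in $\operatorname{Im}\int dh$. The correct coordinate going to infinity is the ``radial'' one; the paper builds it explicitly as $\sss+i\zz$ and shows $(\sss,\zz)$ is a near-diffeomorphism onto a half-strip via the logarithmic change of variable $g(w)=\omega_1\bigl(1-e^{-k\overline{w}+c}\bigr)$. Neither slip changes the overall strategy, but both would need correcting before the sketch could become a proof.
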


\begin{proof}
The usual Inner-Weierstrass recipe
(see \cite{KarT} for example or any standard reference for the classical theory of minimal surfaces)
defines from the data \ref{WeiData} a minimal surface in $\R^3$ parametrized
by the closed unit disc in $\C$ punctured at the
$2k\textsuperscript{Th}$ roots of $-1$.
Requiring the parametrization to take the origin in $\C$ to the origin in $\R^3$,
it takes the form 
\begin{equation}
\label{WeiMap}
\begin{aligned} 
&\xx(w)=\Re \int_0^w \frac{1-\zeta^{2k-2}}{1+\zeta^{2k}} \, d\zeta
      =\frac{1}{k} \sum_{j=1}^{2k} (-\Re \omega_j) \ln  |w-\omega_j|, \\
&\yy(w)=\Re \int_0^w i\frac{1+\zeta^{2k-2}}{1+\zeta^{2k}} \, d\zeta
      =\frac{1}{k} \sum_{j=1}^{2k} (\Im \omega_j) \ln |w-\omega_j|, \\
&\zz(w)=\Re \int_0^w \frac{2\zeta^{k-1}}{1+\zeta^{2k}} \, d\zeta
      =\frac{1}{k} \sum_{j=1}^{2k} (-1)^{j-1} \arg \frac{\omega_j-w}{\omega_j},
\end{aligned}
\end{equation}
where $\omega_j = e^{i\pi \frac{1+2j}{2k}}$
is the $j$\textsuperscript{Th} $2k$\textsuperscript{Th} root of $-1$
and 
$\arg$ is the imaginary part of the branch of the logarithmic function
cut along the ray from $0$ to $-\infty$
and taking the value $0$ at $1$.

The symmetries can be read from the data by the following standard argument.
Looking at the expression for the metric
\begin{equation}
D's^2=\left(|\zeta|^{k-1}+\frac{1}{|\zeta|^{k-1}}\right)^2 \left| \frac{1}{\zeta^k+\zeta^{-k}} \frac{d\zeta}{\zeta}  \right|^2
\end{equation} in terms of the Enneper-Weierstrass data \ref{WeiData},
one identifies as intrinsic geodesics both
circumferential arcs on the unit circle
and diametric segments joining opposite $2k\textsuperscript{th}$ roots of $\pm 1$.
Looking next at the expression for the second fundamental form
\begin{equation}
\label{tow2ff}
A_{tow}(V,V) = 2(k-1) \Re 
  \left[
  \left(\frac{d\zeta}{\zeta}V\right) \left(\frac{1}{\zeta^k+\zeta^{-k}}\frac{d\zeta}{\zeta}V \right)
  \right]
\end{equation}
in terms of the Enneper-Weierstrass data, it becomes apparent that the diametric segments joining opposite $2k$\textsuperscript{th} roots of $-1$ are extrinsic geodesics as well, which are therefore mapped to Euclidean lines, while the diametric segments joining opposite $2k$\textsuperscript{th} roots of unity along with the circumferential arcs are lines of curvature, which, being also intrinsic geodesics, are necessarily mapped to planar curves.

Consultation of \ref{WeiMap} confirms that
(a) the straight lines lie along the intersection of the single horizontal plane $\zz=0$
with the vertical planes of the form $\theta=\pi/2k + n\pi/k$ for $n \in \Z$, 
(b) the images of the circumferential arcs lie, alternatingly,
in the two horizontal planes $\zz=\pm \pi/2k$,
and (c) the images of the remaining lines of curvature lie, consecutively, in
the vertical planes of the form $\theta= n\pi/k$ for $n \in \Z$.

Below we will check that the parametrization \ref{WeiMap}
is in fact an embedding of the punctured unit disc.
In fact we will show that the image of the punctured sector
  \begin{equation}
    D:=\left\{re^{i\theta}: 0 \leq r \leq 1, \, 0 \leq \theta \leq \frac{\pi}{2k}\right\}
      \backslash \{\omega_1\}
  \end{equation}
is embedded and intersects the planes
$\zz=0$, $\zz=\pi/2k$, $\theta=0$, and $\theta=-\pi/2k$ only along
the curves just mentioned. 
The reflection principle for harmonic functions then implies
that this image may be extended to a
complete embedded minimal surface $\check{\mathcal{S}}_k$,
invariant under reflection through any line in $\frac{1}{k}\Ck$
and through any plane of the form $\theta=\pi/2k+j\pi/k$
or $\zz=\pi/2k+j\pi/k$ for $j \in \Z$.
We define $\Sk:=k \rote_{\zz \text{-axis}}^{\pi/2k} \check{\mathcal{S}}_k$.

Items (ii) and (iii) and the first two claims of item (i) follow immediately,
as do the containments $\Grefl(\Ck) \subsetneq \Gsym(A_k) \subseteq \Gsym(\Sk)$,
considering that item (v) and the equalities
$\Gsym(A_k)=\Gsym(A'_k)= \left\langle
      \rote_{\zz \text{-axis}}^{2\pi/k}, \,
      \rote_{\xx \text{-axis}}^\pi, \, \transe_{\zz \text{-axis}}^{2\pi}, \,
      \refle_{\theta=\pi/2k}, \, \refle_{\zz=\pi/2}
    \right\rangle$
are clear from the definitions (\ref{Ck} and \ref{Ak}) of $\Ck$, $A_k$, and $A'_k$ alone.
To see the containment
$\Gsym(\Sk) \subseteq \Gsym(\Ck)$
note that any symmetry of the tower must permute the asymptotic planes,
so must preserve their intersection,
so must take any line in $\Ck$ to a line orthogonally intersecting the $\zz$-axis;
we are assuming (and confirm with the maximum-principle argument below)
that $\Sk$ intersects the $\zz$-axis only where $\Ck$ does,
and from the expression (\ref{tow2ff}) for the second fundamental form
we know we have already accounted for all lines on $\Sk$ through such points.
The equalities
$\Gsym(\Ck)= \Gsym(A_k) \; \cup \; \rote_{\zz \text{-axis}}^{\pi/k} \Gsym(A_k)
     = \Gsym(A_k) \; \cup \; \transe_{\zz \text{-axis}}^{\pi} \Gsym(A_k)$
are also immediate consequences of the definitions.
In particular reflection through the $\zz=0$ plane preserves $\Ck$,
but it does not preserve $\Sk$
(because, for example,
the normal to $\Sk$ is not constantly vertical along the lines $\Sk \cap \{\zz=0\}$),
so $\Gsym(\Sk) \neq \Gsym(\Ck)$. 
Now, since $\Gsym(A_k)$ has index $2$ in $\Gsym(\Ck)$
and $\Gsym(A_k) \subseteq \Gsym(\Sk) \subsetneq \Gsym(\Ck)$,
in fact $\Gsym(A_k)=\Gsym(\Sk)$.

Thus we have checked items (ii)-(vii), as well as the first two claims of (i),
and the transitivity claim in (iv) is now obvious.
To verify the remaining claim in (i), note that any straight line in $\Sk$,
by virtue of the latter's minimality, is a line of reflectional symmetry. 
On the other hand reflection through a straight line in $\R^3$
preserves $A_k$ only if the line is the $\zz$-axis (and then only for $k$ even)
or if it is contained in $\Ck$ or
$\transe_{\zz \text{-axis}}^{\pi/2}\rote_{\zz \text{-axis}}^{\pi/2k}\Ck$.
It is easy to see, however, that of these lines only those contained in $\Ck$ lie on the surface.
(For example \ref{WeiData} and \ref{WeiMap} reveal that at easily identified points where
any of the other lines do intersect the surface the normal to the surface there is parallel to the line.)

In addition to checking (viii), it remains to show that the image of the region $D$ is embedded
and intersects the planes $\zz=0$, $\zz=\pi/2k$, $\theta=0$, and $\theta=-\pi/2k$
as described above.
That the tower is embedded may be established by recognizing the conjugate surface of the region between two consecutive horizontal planes of symmetry as a graph---specifically the solution to the Jenkins-Serrin problem \cite{JS} on a regular $2k$-gon---and then appealing to a theorem of Krust. See \cite{KarT} for details on this approach. Alternatively we show embeddedness more directly as follows
and in the process identify the intersection of the image of $D$ with these four planes.

Recall that $D$ is the punctured sector $\{re^{i\theta}: 0 \leq r \leq 1, \, 0 \leq \theta \leq \frac{\pi}{2k}\} \backslash \{\omega_1\}$. From \ref{WeiMap} it is clear that $d\zz$ is positive along the radial segment from the origin to $1$ and vanishes along both the radial segment from the origin to $\omega_1$ and the circular arc from $1$ to $\omega_1$. A sufficiently small circular arc, centered at $\omega_j$, which originates on the segment from $0$ to $\omega_1$ and terminates on the circumferential arc from $\omega_1$ to $1$, can be seen from \ref{WeiMap} to have height monotonically increasing from $0$ to $\frac{\pi}{2k}$. The maximum principle then implies that the image of $D$ is contained in the slab
$\{0 \leq \zz \leq \frac{\pi}{2k}\}$
and intersects $\zz=0$ only along the (straight, horizontal) image of the radial segment to $\omega_1$
and $\zz=\pi/2k$ only along the (horizontal) image of the circumferential arc.

Similarly, from \ref{WeiMap} one may readily check monotonicity of $d\xx$, $d\yy$, and $(\Re \omega_1) d\yy + (\Im \omega_1) d\xx$  on the boundary curves of $D$ in order to establish that the boundary has image contained in the wedge
$\{(\rr \cos \theta, \rr \sin \theta, \zz) \; : \;
  \rr \geq 0, \, -\frac{\pi}{2k} \leq \theta \leq 0, \, \zz \in \R \}$.
Moreover, \ref{WeiMap} reveals that in $D$
\begin{align}
& \lim_{w \to \omega_1} \xx(w) = \infty, \\
& \lim_{w \to \omega_1} \yy(w) = -\infty, \text{ and} \\
& \lim_{w \to \omega_1} [(\Re \omega_1)\yy(w) + (\Im \omega_1)\xx(w)] = 0,
\end{align}
so that another application of the maximum principle (to the harmonic coordinate functions $\xx$, $\yy$, and $(\Re \omega_1)\yy + (\Im \omega_1)\xx$, the last extended to the closure of $D$) establishes that the image of $D$ is contained in 
$\{(\rr \cos \theta, \rr \sin \theta, \zz) \; : \;
  \rr \geq 0, \, -\frac{\pi}{2k} \leq \theta \leq 0, \, 0 \leq \zz \leq \frac{\pi}{2k}\}$ 
and intersects $\theta=0$ only along the image of the radial segment to $1$
and $\theta=-\pi/2k$ only along the image of the radial segment to $\omega_1$.

Because of the symmetries it therefore suffices to show that the Enneper-Weierstrass parametrization restricts to $D$ as an embedding. To this end observe first that each level curve of $\yy$ in $D$ is connected. Indeed one can verify that $d\yy$ vanishes nowhere on 
$D$ and has norm (relative to $\abs{dz}^2$) tending to infinity at $\omega_1$,
so $\frac{\nabla \yy}{|\nabla \yy|^2}$ defines a smooth vector field on the closure of $D$. The corresponding flow for time $t$, when it exists, 
maps points (other than the fixed point $\omega_1$) 
with $\yy=\yy_0$ to points with $\yy=\yy_0+t$. 
Examining the field at the boundary, one can check that the backward flow of a point in $D$ exits $D$ only after it reaches the segment joining $0$ and $\omega_1$,
while the forward flow leaves $D$ only through the real boundary segment.
Thus, if the flow for time $t \leq 0$ of a point $x$ on the real segment from $0$ to $1$ lies in $D$, then the flow for time $t$ of any real point to the right of $x$ will also lie in $D$. Now given $w_1, w_2 \in D$ with $\yy(w_1)=\yy(w_2)=\yy_0<0$, the flow for time $-\yy_0$ takes 
each point to a point on the real segment of $D$ (since by the maximum principle and earlier monotonicity arguments this segment is the entirety of the $\yy=0$ curve in $D$). 
By the previous considerations the flow for time $\yy_0$ exists for every point on the real segment joining these points, so, 
the flow for time $\yy_0$ being a continuous function of the initial point, we get a level $\yy=\yy_0$ path joining $w_1$ and $w_2$.

Now suppose there exist points $w_1, w_2 \in D$ such that $\xx(w_1)=\xx(w_2)$ and $\yy(w_1)=\yy(w_2)$. Then there is a path contained in a single level curve of $\yy$ joining $w_1$ and $w_2$, and, if $w_1$ and $w_2$ are distinct, by the mean value theorem there exists a third point $w_3$ between $w_1$ and $w_2$ on this path at which $d\xx$ vanishes along the path. Thus at $w_3$ the gradients $\nabla \xx$ and $\nabla \yy$ must be parallel. One finds from \ref{WeiMap} that these gradients are parallel only on the circular boundary of $D$, where they are tangential to the boundary, and therefore they are endpoints of level curves, so that we may assume $w_3$ is not such a point. Thus we conclude that $w_1=w_2$, showing not only that the Enneper-Weierstrass parametrization is an embedding but also that its image of the unit disc is actually a graph over a region in the $\zz=0$ plane.

Now we prove (viii).
It is clear from \ref{WeiMap} that for $R$ sufficiently large
the set of $w$ in the unit disc with $\xx^2(w)+\yy^2(w) > R^2$
has $2k$ components,
each containing exactly one of the $2k$\textsuperscript{th} roots of $-1$.
We 
define
\begin{align}
\sss(w) &= \Re \omega_1 \xx(w) - \Im \omega_1 \yy(w) \text{ and} \\
\ttt(w) &= \Im \omega_1 \xx(w) + \Re \omega_1 \yy(w),
\end{align}
so that $\ttt(w)$ is
the signed distance of the image of $w$ from the plane $\theta=-\pi/2k$
and 
\begin{equation}
(\xx(w),\yy(w),\zz(w))
= \sss(w) (\Re \omega_1, -\Im \omega_1, 0)
+ \zz(w)(0,0,1)
+ \ttt(w)(\Im \omega_1, \Re \omega_1, 0),
\end{equation}
the three terms on the right being pairwise orthogonal.
We will show that for $R_k$ sufficiently large
there is a correspondingly small neighborhood
$\Omega_1$ of $\omega_1$ in the closed unit disc
such that the map
$f: \C \backslash \bigcup_{j=1}^{2k} \{ c\omega_j \; | \; c \in [1,\infty) \} \to \C$
defined by
  \begin{equation}
    f(w):=(\sss(w),\zz(w))
  \end{equation}
restricts to a diffeomorphism from $\Omega_1$ onto the half-strip
$[R_k,\infty) \times \left[-\frac{\pi}{2k},\frac{\pi}{2k}\right]$.

Working from \ref{WeiMap} we find
  \begin{equation}
    k\ttt(w)
    = 
    \ln \prod_{j=2}^k 
      \abs{ \frac{w-\omega_j}{w-\omega_1^2 
      \overline{\omega_j}}}^{\Im \overline{\omega_1}\omega_j},
  \end{equation}
  \begin{equation}
    \begin{aligned}
    k\zz(w)
    &= \arg \left(1-\frac{w}{\omega_1}\right)
      + \arg \prod_{j=2}^{2k} \left(1-\frac{w}{\omega_j}\right)^{(-1)^{j-1}} \\
    &= \arg \left(1-\frac{w}{\omega_1}\right) + \psi_\zz(w),
    \end{aligned}
  \end{equation}
and
\begin{equation}
  \begin{aligned}
    k\sss(w) &= -\ln |w-\omega_1| + \ln |w+\omega_1| 
      - \sum_{j=2}^{k} \Re \overline{\omega_1}\omega_j 
      \ln |w-\omega_j||w-\omega_1^2 \overline{\omega_j}| \\
    &= -\ln |w-\omega_1| + c + \psi_\sss(w),
  \end{aligned}
\end{equation}
where $c=\displaystyle{\lim_{w \to \omega_1}} \left(k\sss(w) + \ln \abs{w-\omega_j}\right)$
and $\psi_\sss$ and $\psi_\zz$ are defined by the equalities where they are introduced.
Then
$\displaystyle{\lim_{w \to \omega_1} \psi_\sss(w)=\lim_{w \to \omega_1} \psi_\zz(w)}=0$, and for each nonnegative integer $\ell$ there exists a constant 
$C(k,\ell)>0$ such that
\begin{equation}
\sup_{w \in \Omega_1} \sum_{j=0}^\ell \left( \abs{\partial_w^j \partial_{\overline{w}}^{\ell-j} \psi_\sss(w)} + \abs{ \partial_w^j \partial_{\overline{w}}^{\ell-j} \psi_\zz (w)} \right) < C(k,\ell).
\end{equation}

Defining the map $g: \C \to \C$ by
  \begin{equation}
    g(w):=\omega_1\left(1-e^{-k\overline{w}+c}\right),
  \end{equation}
we see that for $R$ sufficiently large the composite
$f \circ g\left|_{[R,\infty) \times (-\pi,\pi)}\right.$ is well-defined
(identifying $\C$ with $\R^2$ as usual)
and
  \begin{equation}
    f(g(w))=w+\frac{1}{k}(\psi_\sss+i\psi_\zz)(g(w)),
  \end{equation}
so,
since
$\displaystyle{\lim_{\Re w \to \infty} g(w)=\omega_1}$
and $\displaystyle{\lim_{\Re w \to \infty} \partial^j_{\overline{w}} g(w)=0}$
for each integer $j>0$,
by taking $R$ large enough we can ensure that
$f \circ g\left|_{[R,\infty) \times (-\pi,\pi)}\right.$
is a small perturbation of the identity
and so a diffeomorphism with image containing the half-strip
$[R_k,\infty) \times \left[-\frac{\pi}{2k},\frac{\pi}{2k}\right]$
for some $R_k>R$.

Thus $f$ itself restricts to a diffeomorphism from some region $\Omega_1$
onto this half-strip, as asserted above,
which shows that the image of $\Omega_1$
under the Enneper-Weierstrass parametrization
is the graph of $\ttt \circ f^{-1}$ over the half-strip
$\{(\rr \cos -\pi/2k, \rr \sin -\pi/2k, \zz) \; : \;
  \rr \geq R_k, \abs{\zz}\leq \pi/2k\}$.
Since $\ttt$ is smooth on $\Omega_1$, $\ttt(\omega_1)=0$,
and $f^{-1}=g \circ (f \circ g)^{-1}$,
we finally obtain the estimates
\begin{equation}
\abs{ \partial_\sss^j \partial_\zz^{\ell-j} (\ttt \circ f^{-1}) (\sss+i\zz)} \leq C(k,\ell)e^{-k\sss}.
\end{equation}
\end{proof}

The union $\Ck$ of all horizontal lines on $\Sk$ can be regarded as a scaffolding for the tower,
but we emphasize that a tower is not uniquely determined by a choice of scaffold: 
\begin{remark}
\label{towerremark} 
(i) The surface $\transe_{\zz \text{-axis}}^\pi \Sk = \rote_{\zz \text{-axis}}^{\pi/k}\Sk$
satisfies all conditions in the lemma provided the roles of $A_k$ and $A_k'$ are reversed in the statement,
so in particular its intersection with $\hhat$ is $\Ck$.

(ii) For $m$ a positive integer the surface $m^{-1}\Sk$ also has $\Ck$ as its intersection with $\hhat$,
but the quotient surface
$\left(m^{-1}\Sk\right) / \left\langle \transe_{\zz \text{-axis}}^{2\pi} \right\rangle$
has genus $(k-1)(m-1)\ne0$ instead of $0$ (recall \ref{tower}.iii). 
\end{remark}

\section{The initial configurations}
\label{initial}

\subsection*{The Clifford tori}

In this subsection we discuss the Clifford tori and their geometry. 
We first introduce some helpful notation. 
Given a great circle $C$ in $\Sph^3$ we will write $C^\perp$ for the furthest great circle from it. 
(Note that the points of $C^\perp$ are at distance $\pi/2$ in $\Sph^3$ from $C$ and any point of $\Sph^3\setminus C^\perp$ 
is at distance $<\pi/2$ from $C$). 
As viewed from $\R^4$, the planes containing $C$ and $C^\perp$ are orthogonal complements. 
On the other hand, $C$ and $C^\perp$ may be regarded as parallel in that the function on $\Sph^3$ measuring distance
from one of the circles is constant on the other.
(This relation of parallelism between two great circles in $\Sph^3$
is not transitive.) 
Another useful characterization of $C^\perp$ identifies it as the set
of poles of great two-spheres with equator $C$.

\begin{definition}[Clifford tori] 
\label{D:tori} 
If $C$ and $C^{\perp}$ are as above, 
then we call them {\em{totally orthogonal}.} 
We define the Clifford torus $\T[C]$ 
with ``axis-circles'' $C$ and $C^\perp$ 
to be the set of points in $\Sph^3$ equidistant from $C$ and $C^\perp$. 
\end{definition}

$\T[C]$ can be alternatively defined as the set of points which are at a distance ${\pi}/{4}$ from $C$, 
or equivalently at a distance ${\pi}/{4}$ from $C^\perp$. 
Clearly $\T[C]=\T[C^\perp]$.  
The set of points at distance $\pi/4$ from $\T[C]$ is $C\cup C^\perp$ 
and the set of points at distance $<\pi/4$ from $\T[C]$ is  
$\Sph^3\setminus (C\cup C^\perp)$.  
$\T[C]$ is a flat, square, embedded torus, foliated by the circles,
of radius $\frac{1}{\sqrt{2}}$ in $\R^4$, where great two-spheres having $C$
as equator and poles on $C^\perp$ intersect $\T$, and also by the circles,
orthogonally intersecting these, where great two-spheres having equator $C^\perp$
and poles on $C$ intersect $\T$ in pairs on opposite sides of the equator.

Evidently any element of $O(4)$ that preserves $C \cup C^\perp$ as a set is a symmetry of $\T[C]$. 
The group of these symmetries includes arbitrary rotation or reflection in the two circles 
as well as orthogonal transformations exchanging the circles. 
To proceed further we have the following. 

\begin{definition}[The rotations $\rot_C^\phi$]  
\label{D:rot} 
Given $C$ as above, $\phi \in \R$,
and assuming an orientation chosen on the totally orthogonal circle $C^\perp$,
we define $\rot_C^\phi$, 
rotation about $C$ by angle $\phi$, 
to be the element of $SO(4)$ preserving $C$ pointwise 
and rotating the totally orthogonal circle $C^\perp$ through an angle $\phi$, 
according to the chosen orientation on $C^\perp$. 
Just as well $\rot_C^\phi$ may be called rotation in $C^\perp$ by angle $\phi$.  
\end{definition} 

We assume now that orientations have been chosen for both $C$ and $C^\perp$. 
(Of course, after orienting $\Sph^3$, an orientation on a circle $C$ determines an orientation on $C^\perp$.)
We define then two $SO(2)$ subgroups of $O(4)$ by 
\begin{equation}
\label{E:Hgrp} 
\Hgrp^\pm_C:= 
\{\rot_C^\phi \rot_{C^\perp}^{\pm\phi}\,:\, \phi\in \R\},  
\end{equation}
each of whose elements rotates $C$ and $C^\perp$ simultaneously by a common angle, 
the two subgroups being distinguished by the relative sense of rotation in the circles.

It is easy to see that any such one-parameter subgroup of $O(4)$,
acting by common rotation in a pair of totally orthogonal circles, 
has only great circles as orbits.
Moreover by an easy calculation the orbits of 
$\Hgrp^+_C$ intersect the orbits of $\Hgrp^-_C$ at an angle equal to twice  
the distance from $C$. 
Consequently, $\T[C]$ itself is foliated by two such families of great circles, 
with the circles of one family intersecting the circles of the other orthogonally. 
More explicitly let $D,D'\subset\T[C]$ be great circles through some given point $p\in\T[C]$ 
with $D$ an orbit of $\Hgrp^+_C$ and $D'$ an orbit of $\Hgrp^-_C$. 
Then one family consists of the images of $D$ under the action of $\Hgrp^-_C$  
and the other of the images of $D'$ under the action of 
$\Hgrp^{+}_C$.  

Reflection through $D$ (that is $\rot_D^\pi$) preserves 
the great circles in $\T[C]$ which are orthogonal to $D$.  
Since these great circles foliate $\T[C]$ we conclude that the reflection 
$\rot_D^\pi$ is a symmetry of $\T[C]$. 
It follows that any point of a Clifford torus lies on a circle of reflection (two in fact)
and this immediately implies the minimality of the torus (since the symmetry allows
the mean curvature nowhere to point). 
Of course the two great circles through a point are asymptotic lines 
for the second fundamental form;
the short circles mentioned above---latitudes $\pi/4$ from the poles
of great two-spheres with equator one of the axis-circles---bisect
these and as such are circles of principal curvature.
These circles have curvature $\sqrt{2}$ in $\R^4$ and $1$ in $\Sph^3$,
showing that the second fundamental form of $\T[C]$ has constant norm $\sqrt{2}$.
This also implies that there are no great circles on $\T[C]$ other than the orbits of 
$\Hgrp^\pm_C$ as above, 
and therefore reflection through any great circle in $\T[C]$ 
is a symmetry of $\T[C]$. 

Given now $D$ and $D'$ as above,  
$\rot_C^{\phi}\rot_{C^\perp}^{-\phi}$ rotates the points of $D$ 
to a parallel great circle along orthogonal geodesic segments parallel to $D'$. 
Taking $\phi=\pi/2$ we conclude that 
$\rot_C^{\pi/2}\rot_{C^\perp}^{-\pi/2}D$, 
and similarly 
$\rot_C^{\pi/2}\rot_{C^\perp}^{\pi/2}D'$,  
are great circles which are parallel 
to $D$ and $D'$ respectively on $\T[C]$ 
at distance $\pi/2$. 
This implies that 
\begin{equation} 
\label{E:rotDD} 
\rot_C^{\pi/2}\rot_{C^\perp}^{-\pi/2}D=D^\perp 
\quad\text{ and } \quad 
\rot_C^{\pi/2}\rot_{C^\perp}^{\pi/2}D'=D'^\perp.
\end{equation} 
Moreover, the elements of $\Hgrp^+_C$ 
which were originally expressed as common rotations in $C$ and $C^\perp$,
are just as well common rotations in $D$ and $D^\perp$ 
(since $\rot_C^\phi\rot_{C^\perp}^\phi$ and $\rot_D^\phi\rot_{D^\perp}^\phi$
obviously have the same action on $D$ and $D^\perp$,
which are orbits of $\Hgrp_C^+$
and are the intersections with $\Sph^3$ of two $2$-planes spanning $\R^4$) 
and hence 
$\Hgrp^+_C = \Hgrp^+_D$ if we choose 
orientations on $D$ and $D^\perp$ appropriately
(or if we choose an orientation on $\Sph^3$).  
Similarly 
$\Hgrp^-_C = \Hgrp^+_{D'}$
(since $D'$ and $D'^\perp$ are orbits of $\Hgrp_C^-$).  

By the above $\T[C]$ is ruled by two families of great circles and any great circle on $\T[C]$ 
belongs to one of the two families.  
Any two circles in a single family are thereby not only parallel in $\T[C]$ 
but also parallel in $\Sph^3$ in the sense of distance as defined above.
Moreover $\T[C]$ is generated by twisting about any one of its great circles $D$
an orthogonally intersecting great circle $D'$, 
where the twisting is common rotation in $D$ and $D^\perp$:
traversing $D$, the conormal co-rotates with the position vector, tracing
out $D$ and $D^\perp$ at equal rates. In this sense the Clifford torus resembles the helicoid,
but the helicoid is just singly ruled, while all of the Clifford torus' great
circles are on equal footing. Additionally, while each helicoid is either right
or left-handed, every Clifford torus is ambidextrous, being right-handed along the
circles of one foliation and left-handed along the others.
More precisely, given an orientation on $\Sph^3$,
a Clifford torus $\T[C]$, and a great circle $D \subset \T[C]$,
we call $D$ right-handed if $\Hgrp_D^+\T[C]=\T[C]$
and left-handed if $\Hgrp_D^-\T[C]=\T[C]$. 

Furthermore, an element $\rot_D^\phi$ of $O(4)$
fixing a great circle $D$ on $\T[C]$ pointwise but rotating $D^\perp$
along itself 
will of course preserve $D$ and $D^\perp$ as sets but rotate $\T[C]$.
Such considerations reveal the existence of a one-parameter family
$\{\rot_D^\phi\T[C] \, : \, \phi \in \R\}$ 
of Clifford tori, 
each intersecting $\T[C]$ transversely along $D$ and $D^\perp$ at a constant angle $\phi$.
Since two great circles intersecting $D$ orthogonally can meet only
on $D$ or $D^\perp$ (unless they coincide everywhere), the ruling forbids distinct
members of this family from intersecting anywhere else.
More precisely for $\phi_1,\phi_2\in\R$, 
$\rot_D^{\phi_1}\T[C] = \rot_D^{\phi_2}\T[C]$ 
when $\phi_1=\phi_2 \pmod \pi$ and  
$\rot_D^{\phi_1}\T[C] \cap \rot_D^{\phi_2}\T[C]= D\cup D^\perp$ otherwise. 

There is a second one-parameter family of Clifford tori through $D$ and $D^\perp$,
having the opposite chirality along both, compared to the first family.
This family can be obtained from the first by, for example, reflection through a great two-sphere
containing $D$ (or $D^\perp$). 
A member of one family then intersects each member of the other family along two pairs of circles: 
$D$ and $D^\perp$ as well as two more circles, 
$D'$ and $D'^\perp$, 
where $D$ and $D^\perp$ intersect 
$D'$ and $D'^\perp$ 
orthogonally. 
The angle of intersection varies (from $0$ to $4\pi$) along these circles, 
with tangency of the surfaces occurring at the eight points where the four circles intersect in pairs.

Note that by the above 
if $D$ is any great circle, 
then 
any Clifford torus $\T$ which contains $D$ contains also $D^\perp$. 
Moreover $\T$ belongs to one of two families of Clifford tori containing $D\cup D^\perp$ as described above. 
Also for each point $p\in \Sph^3\setminus(D\cup D^\perp)$ there is a unique torus in each family which contains it, 
and on that torus there is a unique great circle through $p$ parallel to $D$ and $D^\perp$ in the torus. 
Additionally note that if $\T=\T[C]$,
then, fixing a point $q \in D$, there is a point $p \in C$
at distance $\pi/4$ from $q$, and in fact $p$ must lie a distance
$\pi/4$ from $D$, since otherwise there would be a point on $D$
less than $\pi/4$ from $C$, violating the definition of $\T[C]$.
On the other hand, $D$ is preserved by $\Hgrp_C^+$ or $\Hgrp_C^-$
while both subgroups act transitively on $C$,
so actually the distance from every point on $C$ to $D$ is $\pi/4$.
We conclude that whenever $D$ is a great circle on $\T[C]$,
$C$ is also a great circle on $\T[D]$. 

Now suppose two Clifford tori $\T[C]$ and $\T[C']$ have intersection $D \cup D^\perp$, where $D$ is a great circle.
Since $D \subset \T[C]$ and $D \subset \T[C']$, it follows from the last observation in the previous paragraph that
$C \cup C' \subset \T[D]$.
Moreover $\T[C]$ and $\T[C']$ must have the same chirality along $D$ (since otherwise their intersection would be larger as above), 
so one torus can be obtained from the other by a rotation about $D$, and therefore $C$ and $C'$ (and $C^\perp$ and $C'^\perp$) must be parallel great circles on $\T[D]$. 
Conversely, if $C$ and $C'$ are parallel great circles on $\T[D]$, 
then $C'$ can be obtained from $C$ by a rotation about $D$, 
so $\T[C']$ is obtained from $\T[C]$ by the same rotation, 
and therefore $\T[C]$ and $\T[C']$ intersect along just $D$ and $D^\perp$ as described above. 
Thus two Clifford tori intersect along a single pair of totally orthogonal great circles precisely when all four of their axis-circles are parallel 
on the Clifford torus equidistant from the intersection circles. 
Moreover if $\T[C]$ and $\T[C']$ orthogonally intersect along great circles $D$ and $D^\perp$, 
then each contains the axis-circles of the other: 
by definition of $\T[C]$, geodesics emanating from $\T$ orthogonally hit $C \cup C^\perp$ at distance $\pi/4$ from $\T$, 
but geodesics intersecting $\T[C]$ orthogonally at $D$ by assumption lie on $\T[C']$.
 
On the other hand two Clifford tori
$\T[C]$ and $\T[C']$ 
have intersection
$D\cup D^\perp\cup D'\cup {D'}^\perp$ precisely when
$C$ and $C'$ intersect orthogonally,
in which case $C^\perp$ and $C'^\perp$ also intersect orthogonally
and moreover $C \cup C^\perp \cup C' \cup C'^\perp=\T[D]\cap\T[D']$.
Indeed, suppose first that $\T[C] \cap \T[C']=D\cup D^\perp\cup D'\cup{D'}^\perp$.
Since $D$ lies on $T[C]$ and $T[C']$,
$C$ and $C'$ must be great circles on $\T[D]$;
if $C$ and $C'$ were parallel, then $T[C]$ and $T[C']$
would intersect transversely along $D$ and $D^\perp$ only,
as in the preceding paragraph, so they must intersect orthogonally.
By identical reasoning $C^\perp$ and $C'^\perp$ must also intersect
orthogonally and in fact all four axis-circles
lie on $\T[D']$ as well as on $\T[D]$,
whose intersection is therefore precisely the union of these circles.
Conversely, if we assume $C$ and $C'$ are two orthogonally intersecting great circles,
then we can construct the two Clifford tori $\T$ and $\T'$ containing $C$ and $C'$
(so that $\T \cap \T'=C \cup C^\perp \cup C' \cup C'^\perp$)
and their corresponding axes $D \cup D^\perp$ and $D'\cup D'^\perp$;
it follows from the previous three sentences
(exchanging the roles of all $C$ and $D$ circles)
that $D$ and $D'$ as well as $D^\perp$ and $D'^\perp$
intersect orthogonally and their union is all of $T[C] \cap T[C']$.

Starting with orthogonally intersecting axis-circles $C$ and $C'$ as above,
by varying the angle between $C$ and $C'$ while fixing their two intersection points 
one obtains pairs of tori intersecting along (noncircular) curves with
four points of tangency. The full space of intersections is two-dimensional, corresponding
to the relative configuration of representative axis-circles. It can
be parametrized by, for instance, the minimum and maximum distances
from one of the axis-circles to points on the other. The case of tangency
corresponds to the minimum vanishing, while the case of constant-angle
intersection along a pair of orthogonal great circles corresponds to equality
of the minimum and maximum.

We remark finally that any two Clifford tori necessarily intersect.
This is a consequence of a general result of Frankel \cite{Fra},
but for an elementary argument note that every single Clifford torus divides $\Sph^3$
into two connected components, 
which are open tubular neighborhoods of each axis-circle,
so if two Clifford tori failed to intersect, then one side of one of
them would have to be properly contained in one side of the other,
but the two sides have equal volume, precluding this situation.

\subsection*{The initial configurations}
Although in the previous subsection we discussed Clifford tori in some generality, 
in this article we will only be concerned with 
finite maximally symmetric subcollections of 
a one-parameter family of tori intersecting
along a pair of totally orthogonal great circles and the Clifford torus equidistant from the circles of intersection. 
To fix the notation 
we start by taking $\Sph^3$ to be the unit sphere in $\R^4$,
to which we give its standard orientation and which we will routinely identify
with $\C^2$ via the map $(x^1,y^1,x^2,y^2) \mapsto (x^1+iy^1,x^2+iy^2)$,
so that $\Sph^3=\left\{ (z_1,z_2) \in \C^2 \, : \, \abs{z_1}^2+\abs{z_2}^2=1 \right\}$.
We write
  \begin{equation}
    C_1:=\left\{ \left(e^{it},0\right) \; : \; t \in \R \right\}
      \quad \mbox{ and } \quad
    C_2:=\left\{ \left(0,e^{it}\right) \; : \; t \in \R \right\} = C_1^\perp 
  \end{equation}
for the unit circles in the coordinate planes, oriented by increasing $t$. 
As described in the previous subsection there is a 
unique Clifford torus
  \begin{equation}
    \T':=\T[C_1]=\T[C_2]
    =\left\{ \frac{1}{\sqrt{2}}\left(e^{i\xx}, e^{i\yy}\right): \xx, \yy \in \R \right\}
  \end{equation}
equidistant from $C_1$ and $C_2$.
There is also a
one-parameter family of Clifford tori containing $C_1$ (so also $C_2$)
and right-handed along it (so also along $C_2$);
we distinguish the one
  \begin{equation}
    \T := \left\{ e^{i \zz} (\cos \rr, \sin \rr): \rr, \zz \in \R \right\}
  \end{equation} 
that contains the great circle $C_{0,0}$ in the real plane $\{\Im z_1=\Im z_2=0\}$ in $\C^2$ 
(see \ref{E:Cphph} for the notation).  

For each integer $k \geq 2$ we intend to desingularize the configurations
\begin{equation} 
\label{E:Wk} 
\Wcal_k := \textstyle\bigcup_{j=1}^{k} \T_j
\quad \mbox{ and } \quad
\Wcal_k':=\Wcal_k \textstyle\bigcup \T'
\end{equation}
of $k$ and $k+1$ Clifford tori respectively,
where for each integer $j$
  \begin{equation}
\label{E:Cphph} 
    \T_j := \rot_{C_1}^{(j-1)\pi/k}\T
    =\rot_{C_2}^{-(j-1)\pi/k}\T
    =\left\{ e^{i \zz} \left(\cos \rr, \, e^{i\frac{j-1}{k}\pi} \sin \rr\right): \rr, \zz \in \R \right\},
  \end{equation}
so that $\T_1=\T$ and $\T_{j+k}=\T_j$ for all $j \in \Z$.
Thus $\T_j$ intersects $\T_\ell$ at constant angle $\frac{j-\ell}{k}\pi$
along $C_1$ and $C_2$,
while $\T_j$ intersects $\T'$ orthogonally along the two totally orthogonal geodesics on $\T_j$
equidistant to $C_1$ and $C_2$. 
 
Reflections through certain great circles will play an important role in the construction,
so we name some of these circles now.
First we define
for each $j \in \frac{1}{2}\mathbb{Z}$
  \begin{equation}
\label{E:Cp} 
    C'_j := \rot_{C_1}^{(j-1) \frac{\pi}{k}} 
      \left\{ \left. \frac{e^{i\zz}}{\sqrt{2}}(1,1) \; \right| \zz \in \R \right\}
      = \left\{ \left. \frac{e^{i\zz}}{\sqrt{2}}\left(1,e^{i\frac{j-1}{k}\pi}\right)
        \; \right| \zz \in \R \right\}
    \subset \T',
  \end{equation}
oriented by increasing $\zz$.
Clearly for each $j \in \frac12\Z$ 
  \begin{equation}
\label{E:Cpperp}  
{C'_j}^\perp= C'_{j+k} 
    = \rot_{C_1}^{\pi} C'_j 
    =\rot_{C_2}^{\pi} C'_j, 
\end{equation} 
so in total there are $4k$ such great circles, pairwise disjoint;
$\{C'_j\}_{j \in \Z}$ consists
of the $2k$ great circles where $\T'$ intersects the other Clifford tori in $\Wcal_k'$:
\begin{equation}
\T_j\cap\T'=C'_j \cup C'_{j+k} 
=C'_j \cup {C'_{j}}^\perp    
\end{equation}
and $C'_j$ and $C'_{j+k}$ are parallel on $\T_j$ to $C_1$ and $C_2$ at a distance $\pi/4$; 
and for $j\in\Z$ $C'_{j+1/2}$ is the closer of two totally orthogonal
great circles on $\T'$ equidistant from $C'_j$ and $C'_{j+1}$.
We also mention that
\begin{equation} 
\label{E:Cpinter}
\T_j= \T[C'_{j+k/2}] = \T[C'_{j+3k/2}].
\end{equation}

Next for $\phi_1,\phi_2 \in \R$ we label the great circle orthogonally
intersecting $C_1$ at $\pm (e^{i\phi_1},0)$ and $C_2$ at $\pm (0,e^{i\phi_2})$ by
  \begin{equation}
    C_{\phi_1,\phi_2} = \left\{ \left. \left(e^{i\phi_1}\cos \rr, e^{i\phi_2}
      \sin \rr \right)
      \; \right| \; \rr \in \R \right\}.
  \end{equation}
Thus $C_{\phi_1,\phi_2}$ and $C_{\phi_1',\phi_2'}$
are disjoint unless $\phi_1-\phi_1'$ or
$\phi_2-\phi_2'$ is an integral multiple of $\pi$,
in which case they intersect only at two antipodal points on $C_1$
(when $\phi_2-\phi_2' \not \in \pi\Z$),
only at two antipodal points on $C_2$
(when $\phi_1-\phi_1' \not \in \pi\Z$),
or they coincide;
in particular $C_{\phi_1+j_1\pi, \, \phi_2+j_2\pi}=C_{\phi_1,\phi_2}$
for all $j_1,j_2 \in \Z$.
Note also that 
\begin{equation}
C_{\phi_1,\phi_2}^\perp = C_{\phi_1+\pi/2, \, \phi_2+\pi/2}, 
\qquad 
C_{\phi_1,\phi_2} \cap \T' = \{ \, 2^{-1/2}(\pm e^{i\phi_1}, \pm e^{i\phi_2}) \,\} 
\end{equation} 
where the intersections at the four points are orthogonal, 
and that $\T_j$ is foliated by the disjoint great circles $C_{\phi_1,\phi_2}$
satisfying
$\phi_2=\phi_1+\frac{j-1}{k}\pi$ for $\phi_1 \in [0,\pi)$.

Last we define for each $\psi \in \R$ the great circle
  \begin{equation}
    C''_{\psi} \, := \, \left\{ \left. \frac{1}{\sqrt{2}} 
      \left(e^{i\zz},e^{i(\psi-\zz)}\right) \; \right| \; \zz \in \R \right\}
    =\left\{ \left. \frac{e^{i\zz}}{\sqrt{2}}
      \left(1,e^{i(\psi-2\zz)}\right) \; \right | \; \zz \in \R \right\} \subset \T',
  \end{equation}
oriented by increasing $\zz$.
Note that 
the Clifford torus $\T'$ is foliated by the disjoint great circles
$C''_\psi$ with $\psi \in [0,2\pi)$, 
\begin{equation} 
C''_\psi \cup {C''}_\psi^\perp =\T' \cap \rot_{C_1}^\psi \rot_{C_{0,\pi/2}}^{\pi/2}\T,
\qquad 
{C''_\psi}^\perp=C''_{\psi+\pi}, 
\end{equation} 
and the parameter $\psi$ measures the angle at the point $(1,0)$ between $\T$
and $\rot_{C_1}^\psi \rot_{C_{0,\pi/2}}^{\pi/2}\T$,  
which is a Clifford torus through 
$C_1$ and $C_2$ but left-handed along both. 
Note also that the circles $C''_\psi$ and $C'_j$ intersect orthogonally at two points 
given by 
\begin{equation} 
C''_\psi \cap C'_j 
=
\left\{ 
\pm \frac{1}{\sqrt{2}}\left(e^{i\left(\frac{\psi}{2}-\frac{j-1}{2k}\pi\right)},
e^{i\left(\frac{\psi}{2}+\frac{j-1}{2k}\pi\right)}\right)
\right\}.   
\end{equation} 

\subsection*{The symmetries of the initial configurations and the main constructions}
For future reference we first note the identity
  \begin{equation}
    \rot_C^{\theta}\rot_{C'}^{\theta'} \rot_C^{-\theta}
    =
    \rot_{\rot_C^\theta C'}^{\theta'}
  \end{equation}
for any two great circles $C$ and $C'$ and angles $\theta, \, \theta' \in \R$, 
as well as the particular products
  \begin{equation}
  \label{symcomp}
    \begin{aligned}
      &\rot_{C_{\phi_1,\phi_2}}^\pi \rot_{C_{\phi'_1,\phi'_2}}^\pi
        = \rot_{C_1}^{2(\phi_2-\phi'_2)}\rot_{C_2}^{2(\phi_1-\phi'_1)},  \\
      &\rot_{C'_j}^\pi \rot_{C'_\ell}^\pi
        = \rot_{C_1}^{\frac{\pi}{k}(j-\ell)}\rot_{C_2}^{\frac{\pi}{k}(\ell-j)}, \\
      &\rot_{C''_{\psi_1}}^\pi \rot_{C''_{\psi_2}}^\pi
        = \rot_{C_1}^{\psi_1-\psi_2} \rot_{C_2}^{\psi_1-\psi_2}, \mbox{ and } \\
      &\rot_{C_{0,0}}^\pi \rot_{C'_1}^\pi
        = \rot_{C''_0}^\pi.
    \end{aligned}
  \end{equation}

We now describe the symmetry groups of the above configurations.

\begin{lemma}
\label{configsym} 
(i) For $k \geq 2$  
  \begin{equation} 
  \label{GsymWk}
    \Gsym(\Wcal_k)
    = 
    \left \langle
      \left\{ \rot_{C_1}^{\pi/k}, \, \rot_{C_{0,0}}^\pi, \, \rot_{C_1'}^\pi \right\}
      \cup \left\{\rot_{C_1}^\alpha \rot_{C_2}^\alpha \right\}_{\alpha \in \R} \right \rangle.
  \end{equation}
In particular this group contains reflection through $C_{\phi_1,\phi_2}$
whenever $\phi_1-\phi_2 \in \frac{\pi}{2k}\mathbb{Z}$,
through $C'_j$ for all $j \in \frac{1}{2}\mathbb{\Z}$,
and through $C''_\psi$ for all $\psi \in \R$. 

(ii) $\Gsym(\Wcal_k')=\Gsym(\Wcal_k)$ for $k>2$. 

(iii) $\Gsym(\Wcal_2')=
  \left \langle
        \left\{ \rot_{C_1}^{\pi/2}, \, \rot_{C_{0,0}}^\pi, \, \rot_{C_1'}^{\pi/2} \right\}
        \cup \left\{\rot_{C_1}^\alpha \rot_{C_2}^\alpha \right\}_{\alpha \in \R} \right \rangle
  \supsetneq \Gsym(\Wcal_2)$. 

\end{lemma}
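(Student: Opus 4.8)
The plan is to prove the three parts by combining an explicit coordinate description of the listed elements with an analysis of the block structure of an arbitrary symmetry. First I would record what the generators do on $\Sph^3\subset\C^2$: the element $\rot_{C_1}^{\pi/k}$ is $(z_1,z_2)\mapsto(z_1,e^{i\pi/k}z_2)$, hence carries $\T_j$ to $\T_{j+1}$ by \ref{E:Cphph} and cyclically permutes $\T_1,\dots,\T_k$; $\rot_{C_{0,0}}^\pi$ is complex conjugation $(z_1,z_2)\mapsto(\bar z_1,\bar z_2)$ and $\rot_{C_1'}^\pi$ is the coordinate transposition $(z_1,z_2)\mapsto(z_2,z_1)$, each sending $\T_j$ to $\T_{2-j}$; and each $\rot_{C_1}^\alpha\rot_{C_2}^\alpha$ is $(z_1,z_2)\mapsto e^{i\alpha}(z_1,z_2)$ and fixes every $\T_j$. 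All of these preserve $C_1\cup C_2$ and hence the equidistant Clifford torus $\T'$, so each lies in $\Gsym(\Wcal_k)\cap\Gsym(\Wcal_k')$; this gives ``$\supseteq$'' in (i) and the corresponding easy inclusions in (ii) and (iii).

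For the reverse inclusion in (i) I would start from the identity $\bigcap_{j=1}^k\T_j=C_1\cup C_2$, which holds because a point with $z_1z_2\ne0$ lies in $\T_j$ exactly when $\arg(z_1\bar z_2)\equiv-(j-1)\pi/k\pmod\pi$, and this cannot hold for two consecutive values of $j$ once $k\ge2$. Thus any $\mathbf g\in\Gsym(\Wcal_k)$ preserves $C_1\cup C_2$, so it either fixes each of $C_1,C_2$ or interchanges them; since $C_1,C_2$ span orthogonal $2$-planes, $\mathbf g$ is then either block-diagonal in $O(2)\times O(2)$ with respect to $\C^2=\C\oplus\C$, or such a block-diagonal map followed by $\rot_{C_1'}^\pi$. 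So it suffices to treat block-diagonal $\mathbf g=(A_1,A_2)$. Writing $\Wcal_k=\{z_1z_2=0\}\cup\{\arg(z_1/z_2)\in\tfrac\pi k\Z\}$ and following the induced action on $\arg(z_1/z_2)\bmod\pi$, I would check: a mixed pair (one rotation and one reflection) moves $\arg(z_1/z_2)$ to an expression involving $\arg(z_1z_2)$, which varies on each $\T_j$, so is never a symmetry; a rotation--rotation or reflection--reflection pair is a symmetry precisely when its angular offset lies in $\tfrac\pi k\Z$, and then has the form $(\rot_{C_1}^\alpha\rot_{C_2}^\alpha)(\rot_{C_1}^{\pi/k})^m$, respectively that composed with $\rot_{C_{0,0}}^\pi$. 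In all cases $\mathbf g$ belongs to the generated group. I expect this case analysis---classifying the elements of $O(2)\times O(2)$ stabilizing $\Wcal_k$ and checking each is realized by the three generators together with $\Hgrp_{C_1}^+$---to be the main obstacle, the rest being bookkeeping.

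The ``in particular'' assertions in (i) then follow from the products in \ref{symcomp}. The last of these gives $\rot_{C_0''}^\pi=\rot_{C_{0,0}}^\pi\rot_{C_1'}^\pi\in\Gsym(\Wcal_k)$, and conjugating by $\rot_{C_1}^{\psi/2}\rot_{C_2}^{\psi/2}$, which carries $C_0''$ to $C_\psi''$, yields $\rot_{C_\psi''}^\pi$ for every $\psi$. The second identity writes $\rot_{C_j'}^\pi$ with $j\in\tfrac12\Z$ as $\rot_{C_1'}^\pi$ times a map of the form $(e^{-i\theta}z_1,e^{i\theta}z_2)$ with $\theta\in\tfrac\pi{2k}\Z$, and every such map is a product of an element of $\Hgrp_{C_1}^+$ with a power of $\rot_{C_1}^{\pi/k}$, hence lies in the generated group. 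Likewise the first identity writes $\rot_{C_{\phi_1,\phi_2}}^\pi=\rot_{C_1}^{2\phi_2}\rot_{C_2}^{2\phi_1}\rot_{C_{0,0}}^\pi$, whose rotational factor $(e^{2i\phi_1}z_1,e^{2i\phi_2}z_2)$ lies in the generated group exactly when $\phi_1-\phi_2\in\tfrac\pi{2k}\Z$.

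Finally, parts (ii) and (iii) turn on whether $\T'$ can be moved within the configuration. Any symmetry of $\Wcal_k$ permutes $\{\T_j\}$, hence fixes $\bigcap_j\T_j=C_1\cup C_2$, hence fixes the equidistant torus $\T'$, so $\Gsym(\Wcal_k)\subseteq\Gsym(\Wcal_k')$. Conversely, a symmetry of $\Wcal_k'$ preserves the multiset of constant pairwise intersection angles; for $k>2$ the torus $\T'$ is the unique one meeting every other torus orthogonally (each $\T_j$ meets $\T_{j+1}$ at angle $\pi/k\ne\pi/2$), so $\T'$ is fixed, $\{\T_j\}$ is preserved, and $\Gsym(\Wcal_k')\subseteq\Gsym(\Wcal_k)$, which proves (ii). When $k=2$ this argument fails because all pairwise angles equal $\pi/2$; here I would exhibit the extra symmetry directly. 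Since $\T_1$ and $\T'$ are distinct Clifford tori meeting orthogonally along exactly the pair $C_1'\cup{C_1'}^\perp$, they share a chirality along $C_1'$, so the one-parameter-family description of Clifford tori through $C_1'\cup{C_1'}^\perp$ gives $\T'=\rot_{C_1'}^{\pi/2}\T_1$; moreover $\rot_{C_1'}^{\pi/2}$ preserves $\T[C_1']=\T_2$ and therefore lies in $\Gsym(\Wcal_2')\setminus\Gsym(\Wcal_2)$, giving the strict inclusion, while $(\rot_{C_1'}^{\pi/2})^2=\rot_{C_1'}^\pi$ shows the displayed group contains $\Gsym(\Wcal_2)$. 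Given an arbitrary $\mathbf g\in\Gsym(\Wcal_2')$, composing with $\rot_{C_1}^{\pi/2}$ if necessary arranges $\mathbf g\T'\in\{\T',\T_1\}$, and then composing with $\rot_{C_1'}^{\pi/2}$ if necessary arranges $\mathbf g\T'=\T'$, reducing to the case $\mathbf g\in\Gsym(\Wcal_2)$; so $\mathbf g$ lies in the displayed group, completing (iii).
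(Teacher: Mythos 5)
Your proof is correct and follows essentially the same structure as the paper's: establish the easy inclusion, then use $\bigcap_j\T_j=C_1\cup C_2$ to conclude every symmetry preserves or exchanges $C_1,C_2$, and classify the resulting block-diagonal maps in $O(2)\times O(2)$. The paper rules out the mixed rotation-reflection case by a chirality argument (a reflection in just one axis circle reverses handedness of every $\T_j$), whereas you rule it out by tracking $\arg(z_1/z_2)$ versus $\arg(z_1z_2)$; and in part (ii) the paper observes that $C_1,C_2$ cannot be exchanged with any $C'_j$ since $k>2$ tori pass through the former but only $2$ through the latter, while you single out $\T'$ as the unique torus meeting every other at angle $\pi/2$ — these are equivalent invariants phrased differently, and both are sound. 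Part (iii) is handled the same way in both, via $\rot_{C_1'}^{\pi/2}$ exchanging $\T_1$ and $\T'$.
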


\begin{proof}
First suppose $k \geq 2$.
It is clear that $\Gsym(\Wcal_k)$ contains
the group on the right-hand side of equation \ref{GsymWk}.
A symmetry of $\Wcal_k$
will either exchange the intersection circles $C_1$ and $C_2$ or will preserve each as a set.
In particular it will preserve $\T'=\T[C_1]=\T[C_2]$,
so $\Gsym(\Wcal_k) \subseteq \Gsym(\Wcal_k')$.
Moreover, each element of $O(4)$ is completely determined by its action on $C_1$ and $C_2$.

If a symmetry of $\Wcal_k$ preserves $C_1$, then it also preserves $C_2$
and it must act by rotation or reflection in each circle.
An orthogonal transformation reversing the orientation of one circle
but preserving the orientation of the other 
cannot be a symmetry of $\Wcal_k$
(since $C_1$ and $C_2$ are right-handed in all of the tori of the configuration and a reflection in just one of them reverses chirality). 
A symmetry acting by a reflection in one circle must therefore act by reflection in both circles; 
in other words such a symmetry must be a reflection through a geodesic of $\Sph^3$ orthogonally intersecting $C_1$ and $C_2$.  
Reflection through a geodesic orthogonal to both $C_1$ and $C_2$ will preserve the configuration 
precisely when the geodesic lies in $\Wcal_k \cup \rot_{C_1}^{\frac{\pi}{2k}}\Wcal_k$, 
meaning on one of the tori or halfway between two tori.
Reflections through these geodesics clearly belong
to the group on the right-hand side of \ref{GsymWk}.
Any rotation in each circle preserving $\Wcal_k$
can be composed with a symmetry of the form
$\rot_{C_1}^\alpha\rot_{C_2}^\alpha$
to produce a symmetry fixing $C_1$ pointwise and rotating $C_2$.
Clearly any such symmetry likewise belongs to the group on the right-hand side of \ref{GsymWk},
so we have now accounted for all symmetries of $\Wcal_k$ preserving $C_1$ and $C_2$ separately. 
 
Any transformation exchanging the two circles is the product of a transformation preserving them 
with the reflection through a geodesic equidistant from $C_1$ and $C_2$. 
These are the great circles on the torus $\T'$, 
and reflection through such a circle is a symmetry of the configuration precisely 
when the geodesic either orthogonally intersects all the other tori or else lies on 
either one of the other tori or halfway between a consecutive pair of these.
All these reflections belong to the right-hand side of \ref{GsymWk} too,
so we have finished checking (i). 

If $k>2$, then any element of $\Wcal_k'$
must permute the intersection circles $C_1$ and $C_2$
(and cannot exchange them with any intersection circles on $\T'$)
and therefore must preserve $\Wcal_k$, confirming (ii).
In the case $k=2$, however,
the three tori $\T_1$, $\T_2$, and $\T'$ are all equivalent under the symmetries,
as are all six intersection circles $C_1$, $C_2$, $C_1'$, $C_2'$, $C_3'$, and $C_4'$.
In particular $\rot_{C_1'}^{\pi/2}$ belongs to $\Gsym(\Wcal'_2)$
and exchanges $\T_1$ and $\T'$.
Any symmetry of $\Wcal'_2$ exchanging $\T'$ with either of the other Clifford tori
can be composed with $\rot_{C_1'}^{\pi/2}$ and (possibly) an element
of $\Gsym(\Wcal_2)$ to obtain an element of $\Gsym(\Wcal_2)$,
completing the proof of (iii). 
\end{proof}

The choices we are about to make to desingularize the initial configurations will break many
of the symmetries just described, including in particular the continuous symmetries.
Nevertheless, to simplify the construction we will insist on
retaining reflections through
a collection of great circles which will be included in their entirety
on the surfaces we construct
and which serve as a sort of scaffolding for the construction.
More precisely for any integers $k \geq 2$ and $m \geq 1$
we introduce the scaffoldings
  \begin{equation}
    \label{scaff}
      \mathcal{C}_{k,m} := \bigcup_{j, \ell \in \Z} C_{\frac{j\pi}{k,m},\frac{j\pi}{km}+\frac{\ell \pi}{k}}
        \subset \Wcal_k
      \quad \mbox{ and } \quad
      \mathcal{C}'_{k,m} := \mathcal{C}_{k,2m} \cup
        \bigcup_{j \in \Z} 
        C''_{\frac{j\pi}{km}} \subset \Wcal'_k
  \end{equation}
and corresponding groups
  \begin{equation}
  \label{symm}
    \Grp_{k,m} := \Grefl(\mathcal{C}_{k,m}) \subset O(4)
    \quad \mbox{ and } \quad
    \Grp'_{k,m} := \Grefl(\mathcal{C}'_{k,m}) \subset O(4).
  \end{equation}
Motivation for the choice of scaffolds
and corresponding symmetries can be found in the next section.

\begin{lemma}
For any integers $k \geq 2$ and $m \geq 1$
  \begin{enumerate}[(i)]
    \item $\Ckm$ is the union of $k^2m$ great circles, 
      of which the $km$ circles
        $C_{\frac{j\pi}{km}, \frac{j\pi}{km}+\frac{\ell-1}{k}\pi}$
         with $1 \leq j \leq km$ are parallel on $\T_\ell$;
    \item $\Ckmp$ is the union of $2k^2m+2km$ great circles,
      of which the $2km$ circles
      $C_{\frac{j\pi}{2km}, \frac{j\pi}{2km}+\frac{\ell-1}{k}\pi}$
      with $1 \leq j \leq 2km$ are parallel on $\T_\ell$
      and the $2km$ circles
      $C''_{\frac{j\pi}{km}}$ with $1 \leq j \leq 2km$ are parallel on $\T'$;
    \item $\Ckm$ intersects each of $C_1$ and $C_2$ at the $2km\textsuperscript{th}$ roots of unity,
      at each of which points exactly $k$ great circles in $\Ckm$ intersect,
        one on each $\T_\ell$;
    \item $\Ckmp$ intersects each of $C_1$ and $C_2$ at the $4km\textsuperscript{th}$ roots of unity,
      at each of which points exactly $k$ great circles in $\Ckmp$ intersect,
        one on each $\T_\ell$;
    \item $\Ckmp$ intersects each $C'_\ell$ with $\ell \in \Z$
      at the $4km$ points
      $\frac{1}{\sqrt{2}}\left(e^{i\frac{j\pi}{2km}},e^{i\frac{j\pi}{2km}+i\frac{\ell-1}{k}\pi}\right)$,
      with $1 \leq j \leq 4km$,
      at each of which exactly two great circles in $\Ckmp$ intersect,
      namely $C_{\frac{j\pi}{2km},\frac{j\pi}{2km}+\frac{\ell-1}{k}\pi}$ on $\T_\ell$
      and $C''_{\frac{j\pi}{km}+\frac{\ell-1}{k}\pi}$ on $\T'$; 
    \item $\Grp_{k,m}  
= 
      \left \langle \rot_{C_{0,0}}^\pi, \,
      \rot_{C_1}^{\frac{2\pi}{km}}\rot_{C_2}^{\frac{2\pi}{km}}, \, 
      \rot_{C_1}^{\frac{2\pi}{k}} \right\rangle
= 
      \left \langle \rot_{C_{0,0}}^\pi, \,
      \rot_{C_1}^{\frac{2\pi}{km}}\rot_{C_2}^{\frac{2\pi}{km}}, \, 
      \rot_{C_2}^{\frac{2\pi}{k}} \right\rangle
      \subset \Gsym(\Ckm) \cap \Gsym(\Wcal_k)$; and
    \item  $\Grp'_{k,m} =
      \left \langle 
      \Grp_{k,2m}, \, 
      \rot_{C'_1}^\pi  \right \rangle
      \subset \Gsym(\Ckmp) \cap \Gsym(\Wcal'_k)$.
  \end{enumerate} 
\end{lemma}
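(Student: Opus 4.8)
The plan is to read everything off the explicit parametrizations of this section, using repeatedly that $C_{\phi_1,\phi_2}=C_{\phi'_1,\phi'_2}$ exactly when $\phi_1-\phi'_1$ and $\phi_2-\phi'_2$ both lie in $\pi\Z$, that $\T_\ell$ is foliated by the circles $C_{\phi_1,\,\phi_1+\frac{\ell-1}{k}\pi}$ with $\phi_1\in[0,\pi)$, and that $\T'$ is foliated by the circles $C''_\psi$ with $\psi\in[0,2\pi)$. For (i): a circle $C_{\frac{j\pi}{km},\,\frac{j\pi}{km}+\frac{\ell\pi}{k}}$ is determined by the residues of $\frac{j\pi}{km}$ and of $\frac{\ell\pi}{k}$ modulo $\pi$, which run over $km$ and $k$ values respectively, distinct pairs giving distinct circles; this gives the count $k^2m$, and grouping by the residue of the angular difference modulo $\pi$ exhibits the $km$ circles on each $\T_\ell$ as leaves of its foliation, hence parallel. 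For (ii) I would apply (i) with $m$ replaced by $2m$ to count $\mathcal{C}_{k,2m}$, count the $2km$ distinct circles $C''_{j\pi/(km)}$ (determined by $\frac{j\pi}{km}$ modulo $2\pi$) as leaves of the foliation of $\T'$, and note the two families are disjoint since $C_{\phi_1,\phi_2}\cap\T'$ is only four points, so no $C_{\phi_1,\phi_2}$ lies on $\T'$.

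For (iii)--(v) I would compute the incidences directly. Since $C_{\phi_1,\phi_2}\cap C_1=\{(\pm e^{i\phi_1},0)\}$ and $C''_\psi\cap C_1=\emptyset$ (and likewise with $C_2$), the set $\Ckm\cap C_1$ is $\{(\pm e^{ij\pi/(km)},0):j\in\Z\}$, the $2km$\textsuperscript{th} roots of unity, and for $\Ckmp$ one gets the $4km$\textsuperscript{th} roots; at each such point the incident scaffold circles are precisely those $C_{\phi_1,\phi_2}$ with $\phi_1$ in the forced residue class, one on each $\T_\ell$, which is (iii) and (iv). For (v) I would use $C_{\phi_1,\phi_2}\cap\T'=\{2^{-1/2}(\pm e^{i\phi_1},\pm e^{i\phi_2})\}$ together with the formulas \ref{E:Cp} for $C'_\ell$ and for $C''_\psi\cap C'_\ell$ to check that among the circles of $\mathcal{C}_{k,2m}$ exactly those on $\T_\ell$ meet $C'_\ell$, each in the two antipodal points of the asserted form, and that exactly one circle $C''_{j\pi/(km)}$ passes through each such point; matching the two index parametrizations identifies it as $C''_{\frac{j\pi}{km}+\frac{\ell-1}{k}\pi}$.

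For (vi) I would first observe that the three generators act on $\Sph^3\subset\C^2$ by $\rot_{C_{0,0}}^\pi\colon(z_1,z_2)\mapsto(\overline{z_1},\overline{z_2})$, by $\rot_{C_1}^{2\pi/(km)}\rot_{C_2}^{2\pi/(km)}\colon(z_1,z_2)\mapsto(e^{2\pi i/(km)}z_1,e^{2\pi i/(km)}z_2)$, and by $\rot_{C_1}^{2\pi/k}\colon(z_1,z_2)\mapsto(z_1,e^{2\pi i/k}z_2)$, so that they carry $\Ckm$ onto itself (sending $C_{\phi_1,\phi_2}$ to $C_{-\phi_1,-\phi_2}$, to $C_{\phi_1+\frac{2\pi}{km},\,\phi_2+\frac{2\pi}{km}}$, and to $C_{\phi_1,\,\phi_2+\frac{2\pi}{k}}$ respectively) and lie in $\Gsym(\Wcal_k)$ by Lemma \ref{configsym}(i) (the first being one of the generators listed there, the second of the form $\rot_{C_1}^\alpha\rot_{C_2}^\alpha$, the third $(\rot_{C_1}^{\pi/k})^2$); hence the group they generate lies in $\Gsym(\Ckm)\cap\Gsym(\Wcal_k)$. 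To see that this group equals $\Grp_{k,m}=\Grefl(\Ckm)$, I would apply the first identity of \ref{symcomp} with $(\phi'_1,\phi'_2)=(0,0)$ to obtain $\rot_{C_{\frac{j\pi}{km},\,\frac{j\pi}{km}+\frac{\ell\pi}{k}}}^\pi=(\rot_{C_1}^{2\pi/(km)}\rot_{C_2}^{2\pi/(km)})^{j}(\rot_{C_1}^{2\pi/k})^{\ell}\rot_{C_{0,0}}^\pi$, which shows $\Grefl(\Ckm)$ lies in the stated group, while specializing $(j,\ell)=(1,0)$ and $(0,1)$ recovers the other two generators as products of reflections in $\Ckm$; the second presentation then follows from $(\rot_{C_1}^{2\pi/(km)}\rot_{C_2}^{2\pi/(km)})^{m}=\rot_{C_1}^{2\pi/k}\rot_{C_2}^{2\pi/k}$.

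Part (vii) is entirely parallel: $\rot_{C'_1}^\pi$ acts by $(z_1,z_2)\mapsto(z_2,z_1)$, so it sends $C_{\phi_1,\phi_2}$ to $C_{\phi_2,\phi_1}$ and fixes each $C''_\psi$ setwise, hence preserves $\Ckmp=\mathcal{C}_{k,2m}\cup\bigcup_j C''_{j\pi/(km)}$; together with the facts that the generators of $\Grp_{k,2m}$ also preserve $\Ckmp$ (checked as in (vi) on both families of circles), that $\Grp_{k,2m}\subseteq\Gsym(\Wcal_k)\subseteq\Gsym(\Wcal'_k)$, and that $\rot_{C'_1}^\pi\in\Gsym(\Wcal'_k)$ (reflection through $C'_1\subset\T'$, which belongs to $\Gsym(\Wcal_k)$ by Lemma \ref{configsym}(i)), this gives $\langle\Grp_{k,2m},\rot_{C'_1}^\pi\rangle\subseteq\Gsym(\Ckmp)\cap\Gsym(\Wcal'_k)$. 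Finally $\Grefl(\Ckmp)$ is generated by $\Grefl(\mathcal{C}_{k,2m})=\Grp_{k,2m}$ together with the $\rot_{C''_{j\pi/(km)}}^\pi$, and the last and third identities of \ref{symcomp} give $\rot_{C''_0}^\pi=\rot_{C_{0,0}}^\pi\rot_{C'_1}^\pi$ and $\rot_{C''_{j\pi/(km)}}^\pi=(\rot_{C_1}^{\pi/(km)}\rot_{C_2}^{\pi/(km)})^{j}\rot_{C''_0}^\pi$, exhibiting these reflections in $\langle\Grp_{k,2m},\rot_{C'_1}^\pi\rangle$ (the rotation factor lying in $\Grp_{k,2m}$), while conversely $\rot_{C'_1}^\pi=\rot_{C_{0,0}}^\pi\rot_{C''_0}^\pi\in\Grefl(\Ckmp)$. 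I expect the one genuinely tedious point to be the index bookkeeping in (v)---aligning the $\mathcal{C}_{k,2m}$-parametrization with the $C''$-parametrization at each of the $4km$ points of $C'_\ell$; everything else reduces to substitution into the explicit formulas and the identities \ref{symcomp}.
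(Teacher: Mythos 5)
Your proposal is correct and takes essentially the same approach as the paper: items (i)--(v) and the containments in (vi)--(vii) are read directly off the explicit parametrizations and the uniqueness criterion $C_{\phi_1,\phi_2}=C_{\phi'_1,\phi'_2}\iff\phi_1-\phi'_1,\phi_2-\phi'_2\in\pi\Z$, while the group equalities in (vi)--(vii) are extracted from the composition identities in \ref{symcomp}. The paper's proof is extremely terse (a single sentence for (i)--(v) plus a display of the three composition identities); yours supplies the same argument with all the routine verification written out, including the useful observation that $(\rot_{C_1}^{2\pi/km}\rot_{C_2}^{2\pi/km})^m=\rot_{C_1}^{2\pi/k}\rot_{C_2}^{2\pi/k}$ gives the second presentation in (vi), and that $\rot_{C'_1}^\pi=\rot_{C_{0,0}}^\pi\rot_{C''_0}^\pi$ provides the reverse inclusion in (vii).
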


\begin{proof} 
Items (i)-(v) follow immediately from the definitions,
as do the containments in the last two items.
Referring to \ref{symcomp}, we observe
  \begin{equation}
\begin{aligned} 
    \rot_{C_{\frac{j\pi}{2km},\frac{j\pi}{2km}+\frac{\ell \pi}{k}}}^\pi
      \rot_{C_{0,0}}^\pi
   & =\left(\rot_{C_1}^{2\pi/k}\right)^\ell
      \left(\rot_{C_1}^{\pi/km}\rot_{C_2}^{\pi/km}\right)^j,
\\ 
    \rot_{C_{0,0}}^\pi \rot_{C'_1}^\pi
&=\rot_{C''_0}^\pi,
\\ 
    \rot_{C''_0}^\pi \rot_{C''_{\frac{j\pi}{km}}}^\pi
   & =
    \left(\rot_{C_1}^{\pi/km}\rot_{C_2}^{\pi/km}\right)^j, 
\end{aligned} 
  \end{equation}
completing the proof.
\end{proof}

\section{The initial surfaces}
\label{Sinit}

\subsection*{Toral coordinates along a great circle} 

We define $\Phi : \R^3 \to \Sph^3$ by 
\begin{equation}
\label{E:Phi} 
\Phi(\rr \cos \theta, \rr \sin \theta, \zz) = e^{i\zz}(\cos \rr, e^{i\theta} \sin \rr) . 
\end{equation}
Observe that $\Phi$ takes planes 
of constant $\theta$ to Clifford tori through $C_1$ and $C_2$, 
cylinders of constant $\rr$ to constant-mean-curvature tori 
(degenerating to $C_1$ or $C_2$ when $\rr$ is respectively an even or odd multiple of $\pi/2$)
with axis-circles $C_1$ and $C_2$,
horizontal planes to great two-spheres with equator $C_2$,
and radial and vertical lines to great circles. 
In particular
$\Phi(\{\yy=0\})=\T$,
$\Phi\left(\left\{\sqrt{\xx^2+\yy^2}=\frac{\pi}{4}\right\}\right)=\T'$,
$\Phi(\{\xx=\yy=0\})=C_1$,
$\Phi(\{\yy=\zz=0\})=C_{0,0}$,
and $\Phi\left(\left\{\xx=\frac{\pi}{4}, \, \yy=0\right\}\right)=C'_1$.
On the other hand
the Clifford torus left-handed along $C_1$
and containing the circle $\rot_{C_1}^\psi C_{0,0}$
is the image under $\Phi$ of the helicoid $\theta=\psi-2\zz$,
and $C''_\psi$ and $C''_{\psi+\pi}$ are the images
of the two helices where this helicoid intersects
the cylinder $\rr=\pi/4$.
In particular
$\Phi\left(\left\{
\left(\frac{\pi}{4} \cos \theta, \, \frac{\pi}{4} \sin \theta, \, -\frac{1}{2}\theta \right) \; : \;
  \theta \in [0,2\pi) \right\} \right)=C''_0$.
Though great spheres do not play an important role in the construction,
to aid understanding of the geometry of $\Phi$ we also mention that
the great sphere with equator $C_1$
and poles $\pm (0,e^{i\phi})$
is the image of the helicoid $\theta=\phi-\zz$.

Moreover, the open solid torus
$\left\{\sqrt{\xx^2+\yy^2}<\frac{\pi}{4}\right\}
/ \left\langle \transe_{\zz \text{-axis}}^{2\pi} \right\rangle$ 
is mapped diffeomorphically by $\Phi$ onto the open 
solid torus of points within $\frac{\pi}{4}$ of $C_1$---the $C_1$ side of $\T'$---and 
$\Phi$ is an approximate isometry for small $\rr$.
More precisely, writing $\gsph$ for the round metric on $\Sph^3$
and $\geuc$ for the Euclidean metric on $\R^3$, we have
\begin{equation}
\label{phullback}
  \Phi^*\gsph = 
    d\rr^2 + \sin^2 \rr \, d\theta^2 + 2 \sin^2 \rr \, d\theta \, d\zz + d\zz^2 \\
    = \geuc + \left(\sin^2 \rr - \rr^2\right)d\theta^2 
      + 2 \sin^2 \rr \, d\theta \, d\zz.
\end{equation}
We also note that $\Phi$ intertwines some symmetries of interest: for every $c\in\R$
\begin{equation}
\label{phintertwine}
  \begin{aligned}
    &\Phi \rote_{\zz \text{-axis}}^{c} = \rot_{C_1}^{c} \Phi, \\
    &\Phi \rote_{\xx \text{-axis}}^\pi = \rot_{C_{0,0}}^\pi \Phi, \text{ and} \\
    &\Phi \transe_{\zz \text{-axis}}^c
      = \rot_{C_1}^c \rot_{C_2}^c \Phi.
  \end{aligned}
\end{equation}

\subsection*{Towers with straightened wings}
Since $\Phi$ maps the asymptotic planes of the Karcher-Scherk towers to Clifford tori, 
we can smoothly glue the towers imported by $\Phi$ to the tori of our configurations
by straightening the wings to exact half-planes before applying $\Phi$.
We accomplish this transition using the cut-off functions defined in \ref{Epsiab}.
Specifically, 
given integers $k \geq 2$ and $m \geq 1$,
we set 
  \begin{equation}
  \label{adef}
    a:=a_m:=\frac{m\pi}{4}-10 
  \end{equation}
and we define the map
  \begin{equation}
    \xkm: \Sk \to \R^3
  \end{equation}
by modifying the inclusion map
$\iota_{_{\Sk}}: \Sk \to \R^3$
(recall \ref{tower}) as follows.
We will specify $\xkm$ on the wedge 
$E:=\{(\rr \cos \theta, \rr \sin \theta, \zz) \; : \;
  \rr \geq 0, \, \abs{\theta} \leq \pi/2k, \, \zz \in \R \}$ 
and complete its global definition by requiring it
to commute with all elements
of $\Gsym(\Sk)$. 
We assume that $m$ is large enough so that $a>R_k$.
The new map $\xkm$ agrees with $\iota_{_{\Sk}}$
on 
$\Sk \cap E \cap \{\xx<a\}$.
We recall from \ref{tower} that the complement
$\Sk \cap E \cap \{\xx \geq a\}$
is the graph $\{(\xx,W_k(\xx,\zz),\zz) \; : \; \xx \geq a, \, \zz \in \R\}$
of the function $W_k$ over the half-plane $[a,\infty) \times \{0\} \times \R$
and further that this graph misses the boundary of the wedge.
We decree that
  \begin{equation}
  \label{xkm}
    \xkm: (\xx,W_k(\xx,\zz),\zz) \mapsto (\xx,\cutoff{a+1}{a}(\xx)W_k(\xx,\zz),\zz)
  \end{equation}
for each point $(\xx,W_k(\xx,\zz),\zz)$ in this region.
Imposing the symmetries as just described completes the definition of $\xkm: \Sk \to \R^3$,
and we also define its truncated image
  \begin{equation}
  \label{Stildekm}
    \Stildekm(s) = \xkm \left(\Sk \cap \{ \sqrt{\xx^2+\yy^2} \}\right)
  \end{equation}
for any $s>0$ controlling the truncation.

\subsection*{Data, Symmetries, and Scaffolding}
Each initial surface $M$ desingularizing $\Wcal_k$
is specified by a quintuple of data $(k,m,n_1,n_2,\sigma)$,
and each initial surface $N$ desingularizing $\Wcal'_k$
is specified by a 
septuple $(k,m,n,n'_1,n'_{-1},\sigma'_1,\sigma'_{-1})$,
where $k \geq 2$ and $m,n,n_1,n_2,n'_1,n'_{-1} \geq 1$ are integers
and $\sigma,\sigma'_1,\sigma'_{-1} \in \{0,1\}$.
The role of $k$ in the definition of the initial configurations is already clear:
it is the number of Clifford tori intersecting along $C_1$ and $C_2$.
Along with $m$ it determines the scaffolding ($\Ckm$ or $\Ckmp$)
according to \ref{scaff}.
We have also used $m$ in \ref{adef}
to set the distance from a tower's axis at which its wings are straightened.
Together with $k$ and $m$ the remaining positive integers
(either $n$, $n_1$, and $n_2$ or $n'_1$ and $n'_{-1}$) specify
the number of fundamental periods, or equivalently the scale,
of the towers along each circle of intersection.
Finally, the relative alignment of towers along the various circles
is prescribed by either $\sigma$ or $\sigma'_1$ and $\sigma'_{-1}$.

We will now explain the roles of the data in slightly more detail
and simultaneously offer some brief motivation
for the definitions of the scaffoldings (\ref{scaff} above)
and the initial surfaces (\ref{initsdef} below).
The initial surfaces are to be constructed from the initial configurations
they are intended to desingularize
by replacing a tubular neighborhood of each intersection circle $C$
with a truncated Karcher-Scherk tower with straightened wings,
scaled in $\R^3$ by some factor $\mC>0$,
mapped into $\Sph^3$ by $\Phi$, and then positioned
as desired along $C$ by a rotation $\rot[C] \in O(4)$:
  \begin{equation}
  \label{SigmaC}
    \SigmaC:=\rot[C]\Phi
    \left(
      \mC^{-1}
      \widetilde{\mathcal{S}}_{\kC,m}\left(\mC s_{_C}\right)
    \right),
  \end{equation}
where $\kC$ is the number (either $k$ or $2$) of Clifford tori intersecting along $C$
in the corresponding initial configuration
and where $s_{_C}>0$ is picked, somewhat arbitrarily,
to ensure each tower is truncated well away from towers on neighboring circles.

Since $\Sk$ has fundamental period $2\pi$
and $\Phi$ is periodic in $\zz$ with period $2\pi$
(each circle of intersection having length $2\pi$),
obviously $\mC$ must be an integer
in order for the initial surface to be embedded.
Further constraints on each $\mC$ are placed
by the following symmetry requirements which we make of the initial surface
to simplify the analysis of the linearized operator on the towers in Section \ref{linear}.
  \begin{assumption}
    \label{sa}
    Let $\Sigma$ be an initial surface.
    We require $C_{0,0} \subset \Sigma$ and
    $\rot_{C_{0,0}}^\pi \in \Gsym(\Sigma)$,
    and for each intersection circle $C$
    in the corresponding initial configuration
    we require $\rot_C^{2\pi/\kC} \in \Gsym(\Sigma)$.
  \end{assumption}
\noindent The assumption ensures triviality of the kernel of each tower's Jacobi operator
restricted to the space of deformations respecting $\Gsym(\Sigma)$;
we do not claim that these conditions are necessary, but they are natural and sufficient.

It is not hard to check (using \ref{tower} and \ref{symcomp} for instance)
that imposing \ref{sa} is equivalent to demanding
$\Ckm \subset \Sigma$ and $\Grefl(\Ckm) \subset \Gsym(\Sigma)$
(when $\Sigma$ desingularizes $\Wcal_k$)
or $\Ckmp \subset \Sigma$ and $\Grefl(\Ckmp) \subset \Gsym(\Sigma)$
(when $\Sigma$ desingularizes $\Wcal'_k$)
for some integer $m \geq 1$.
In particular each tower $\SigmaC$ must itself contain the appropriate scaffolding,
forcing $\mC$ to divide $km$ (when $\Ckm \subset \Sigma$)
or $2km$ (when $\Ckmp \subset \Sigma)$.
Since $\Grp_{k,m}$ includes no symmetries exchanging $C_1$ and $C_2$,
the quotients $m_{_{C_1}}/km$ and $m_{_{C_2}}/km$ are independent
and are given by $n_1$ and $n_2$.
On the other hand modulo $\Grp'_{k,m}$
$C_1$ and $C_2$ are equivalent to one another
but to no other intersection circles,
while for every $j \in \Z$
the circles $C'_j$ and $C'_{j+2}$ are equivalent
but $C'_j$ and $C'_{j+1}$ are inequivalent.
Thus only $m_{_{C_1}}/2km$,
$m_{_{C'_1}}/2km$, and $m_{_{C'_2}}/2km$
can be independently prescribed as $n$, $n'_1$, and $n'_{-1}$ respectively.

\subsection*{Alignment}
The data so far described completely determine the periods and sizes
of the towers replacing the circles of intersection,
but this information and the particular scaffolding
do not quite fix the initial surface $\Sigma$, even up to congruence.
(We call two initial surfaces $\Sigma_1$ and $\Sigma_2$
\emph{congruent} in $\Sph^3$ if there exists $\rot \in O(4)$
such that $\Sigma_2=\rot \Sigma_1$.)
Specifically there is some not entirely inconsequential freedom
in the choice of $\rot[C]$ in \ref{SigmaC}:
it may be replaced by $\rot_C^{\pi/\kC}\rot[C]$;
equivalently we could replace the tower $\Sk$ defining $\widetilde{\mathcal{S}}_{\kC,m}$
by its ``dual'' tower mentioned in Remark \ref{towerremark}
and having the same scaffolding and symmetry group as $\Sk$
but occupying $A_k'$ instead of $A_k$.

Accordingly we allow the values of $\sigma$
to make a choice of one model tower or the other along $C_2$
for initial surfaces desingularizing $\Wcal_k$
and we use $\sigma'_1$ and $\sigma'_{-1}$ for identical purposes
along the towers $C'_j$ with $j$ odd and even respectively
for initial surfaces desingularizing $\Wcal'_k$.
Thus these data control the \emph{alignment} of the towers.
It is not necessary to allow for the two possibilities on every (inequivalent) intersection circle,
since up to congruence it is only the relative alignment that matters.
In fact, even allowing realignment on just the circles mentioned,
we still sometimes produce congruent initial surfaces with different values of $\sigma$
(or $\sigma'_1$ and $\sigma'_{-1}$), depending on the parities of the other data.

For example, assuming $n_1$ and $n_2$ relatively prime
(since we may absorb a common divisor into $m$),
the surfaces $M(k,m,n_1,n_2,\sigma=0)$ and $M(k,m,n_1,n_2,\sigma=1)$
(formally defined in \ref{initsdef} below)
desingularizing $\Wcal_k$ are congruent precisely when
either $m$ or exactly one $n_i$ is odd.
Indeed, if $mn_1$ is odd, then
  \begin{equation}
    \rot_{C_2}^{\frac{\pi}{k}}M(k,m,n_1,n_2,0)
    =
    \left(\rot_{C_1}^{\frac{\pi}{kmn_1}}\rot_{C_2}^{\frac{\pi}{kmn_1}}\right)^{mn_1}
      \rot_{C_1}^{-\frac{\pi}{k}}M(k,m,n_1,n_2,0)
    =
    M(k,m,n_1,n_2,1);
  \end{equation}
if $n_1$ is even but $n_2$ odd, then
  \begin{equation}
    \left(\rot_{C_1}^{\frac{\pi}{kmn_2}}\rot_{C_2}^{\frac{\pi}{kmn_2}}\right)^{n_2}M(k,m,n_1,n_2,0)
    =
    \left(\rot_{C_1}^{\frac{\pi}{kmn_1}}\rot_{C_2}^{\frac{\pi}{kmn_1}}\right)^{n_1}
      M(k,m,n_1,n_2,0)
    =
    M(k,m,n_1,n_2,1);
  \end{equation}
and if $n_1$ is odd but $m$ and $n_2$ even, then
  \begin{equation}
    \rot_{C_2}^{\frac{\pi}{k}}
      \left(\rot_{C_1}^{\frac{\pi}{kmn_2}}\rot_{C_2}^{\frac{\pi}{kmn_2}}\right)^{n_2}
      M(k,m,n_1,n_2,0)
    =
    \left(\rot_{C_1}^{\frac{\pi}{kmn_1}}\rot_{C_2}^{\frac{\pi}{kmn_1}}\right)^{(m+1)n_1}
      \rot_{C_1}^{-\frac{\pi}{k}}
      M(k,m,n_1,n_2,0)
    =
    M(k,m,n_1,n_2,1).
  \end{equation}

To see that $M(k,m,n_1,n_2,0)$ and $M(k,m,n_1,n_2,1)$
are not congruent when $m$ is even and both $n_1$ and $n_2$ are odd,
recall that each initial surface
will be defined to
include the scaffold circles whose union is $\mathcal{C}_{k,m}$,
which orthogonally intersect $C_1$ and $C_2$
at the $2km\textsuperscript{th}$ roots of unity on each.
Choosing the global normal which at $(1,0)$
points in the positive direction of $C_1$ then determines
the direction of that normal at all $2km\textsuperscript{th}$ roots of unity
on $C_1$ and $C_2$.
We call the unit normal at such a point positive
if it points in the positive direction of the circle on which it lies
and we call it negative otherwise.
Then, given two such points joined
by an arc of a great circle in $\Ckm$,
we say that the two normals there
are aligned if they are either both positive or both negative 
and otherwise say that they are antialigned.
The parities assumed for the data imply that,
for a given initial surface with that data,
this alignment does not depend on the pair of chosen points
and therefore
defines a property of the initial surface
which is invariant under congruences that preserve $\Ckm$,
but it is reversed by altering the value of $\sigma$.
This shows that $M(k,m,n_1,n_2,0)$ is not congruent to $M(k,m,n_1,n_2,1)$.

Similar considerations apply to surfaces desingularizing $\Wcal'_k$,
but to avoid complicating the definition of the initial surfaces
we do not make a systematic effort to eliminate completely
duplication of congruence classes within the collection of initial surfaces.

\subsection*{Definition and basic properties}
With the foregoing in mind we define the initial surfaces as follows,
recalling \ref{adef} and \ref{Stildekm}.
\begin{definition}
\label{initsdef}
Given integers $k \geq 2$, $m \geq 1$,
and relatively prime $n_1,n_2>0$,
as well as $\sigma \in \{0,1\}$,
set
\begin{equation}
  M(k,m,n_1,n_2,\sigma) 
    :=
   \bigcup_{j=1}^2 
     \left(\rot_{C_2}^{\frac{\pi}{k}}
       \right)^{(j-1)\sigma}
       \rot_{C'_1}^{(j-1)\pi} \Phi \left( \frac{1}{kmn_j}
       \Stildekm \left(kmn_j\frac{\pi}{4}\right) \right).
\end{equation}

Given instead integers $k \geq 2$, $m\geq1$, and 
relatively prime $n,n'_1,n'_{-1}>0$,
as well as $\sigma'_1,\sigma'_{-1} \in \{0,1\}$,
set
\begin{equation}
  \begin{aligned}
  &N(k,m,n,n'_1,n'_{-1},\sigma'_1,\sigma'_{-1})
  :=
    \bigcup_{j=0}^1 
    \rot_{C'_1}^{j \pi} \Phi \left(\frac{1}{2kmn}
    \Stildekm \left(2kmn\left[\frac{\pi}{4}-\frac{\pi}{4k}\right] \right)  \right) \\
  & \;\;\; \cup \bigcup_{j=0}^{2k-1} \rot_{C_1}^{j\pi/k}
    \left(\rot_{C_1}^{\pi/kmn'_{(-1)^j}}
      \rot_{C_2}^{\pi/kmn'_{(-1)^j}}\right)
      ^{\sigma'_{(-1)^j}}
    \rot_{C_{0,0}^{\pi/4}\rot_{C_{\frac{\pi}{2},\frac{\pi}{2}}}^{\pi/4}}\Phi
    \left(\frac{1}{2kmn'_{(-1)^j}} 
    \widetilde{\mathcal{S}}_{2,m} \left(2kmn'_{(-1)^j}\frac{\pi}{4k}\right) \right).
  \end{aligned}
\end{equation}

\end{definition}

We will abbreviate the initial surfaces
$M(k,m,n_1,n_2,\sigma)$
and $N(k,m,n,n'_1,n'_{-1},\sigma_1,\sigma_{-1})$
by $M$ and $N$ respectively or sometimes indiscriminately by $\Sigma$,
when context permits.

\begin{remark}
The divisibility
assumptions are made to avoid listing a single initial surface
multiple times under different labels,
but as already acknowledged some redundancy persists in the list
in that certain items are congruent to others.
In such cases the resulting
minimal surfaces ultimately produced will also be congruent.
\end{remark}

We next collect some basic properties of the initial surfaces.

\begin{prop}
\label{initsprop}
For every choice of data, assuming 
$a>\max(R_k,R_2)$, the initial surfaces
$M(k,m,n_1,n_2,\sigma)$
and $N(k,m,n,n'_1,n'_{-1},\sigma_1',\sigma_{-1}')$
are closed, smooth surfaces embedded in $\Sph^3$. Moreover
  \begin{enumerate}[(i)]
    \item $M$ has genus $k(k-1)m(n_1+n_2)+1$;

    \item $N$ has genus $2k^2m(n'_1+n'_{-1}) + 4kmn(k-1) + 1$;

    \item $\Ckm \subset M$ and $\Grp_{k,m} \subseteq \Gsym(M)$; and
  
    \item $\Ckmp \subset N$ and $\Grp'_{k,m} \subseteq \Gsym(N)$.
  \end{enumerate}
\end{prop}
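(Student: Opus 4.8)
The plan is to establish each of the four assertions in turn, exploiting the fact that the initial surfaces are built by gluing together scaled, truncated, wing-straightened Karcher-Scherk towers along the intersection circles, so that all the needed information is essentially local: away from neighborhoods of the circles the surface coincides with a piece of one of the Clifford tori $\T_j$ (or $\T'$), and near each circle it coincides with one of the pieces $\SigmaC$ of \ref{SigmaC}.

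\medskip\noindent\textbf{Closedness, smoothness, and embeddedness.} First I would verify that the various pieces fit together smoothly. By construction $\xkm$ agrees with the inclusion $\iota_{\Sk}$ on $\Sk\cap E\cap\{\xx<a\}$ and, on $\{\xx\ge a\}$, replaces the exponentially decaying graph $W_k$ by the graph of $\cutoff{a+1}{a}(\xx)W_k$, which equals $W_k$ near $\xx=a$ and vanishes identically for $\xx\ge a+1$; since $W_k$ and all its derivatives are uniformly small for $\xx\ge a$ by \ref{towerdecay} (using $a>R_k$, resp.\ $a>R_2$), the wing of $\Stildekm$ coincides with an exact half-plane near its truncation boundary $\{\sqrt{\xx^2+\yy^2}=s\}$. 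Applying $\mC^{-1}$ and then $\Phi$ sends that half-plane to a piece of a Clifford torus through $C_1$ and $C_2$ (because $\Phi$ carries constant-$\theta$ planes to such tori, as recorded after \ref{E:Phi}), and the rotations $\rot[C]$ (resp.\ the explicit rotations in Definition \ref{initsdef}) are chosen exactly so that this piece matches up with the corresponding $\T_j$ or $\T'$; matching of the remaining circles ($C'_j$ for $N$) is likewise arranged by the explicit conjugating rotations. Thus the union is a closed $C^\infty$ surface. Embeddedness follows because each tower piece is embedded (Proposition \ref{tower}, together with the fact that straightening the wings and scaling preserve embeddedness), because $\Phi$ restricts to a diffeomorphism of $\{\sqrt{\xx^2+\yy^2}<\pi/4\}/\langle\transe_{\zz\text{-axis}}^{2\pi}\rangle$ onto the $C_1$-side of $\T'$ (and symmetrically on the $C_2$-side after the appropriate isometry), and because the Clifford-torus pieces away from the circles are disjoint from one another except along the very circles being desingularized, where the towers have been inserted; here one uses the scaling by $1/kmn_j$ (resp.\ $1/2kmn$, $1/2kmn'$) to guarantee that $\Phi$ is an approximate isometry on the relevant scale (via \ref{phullback}) and that the truncation radii $s_C$ keep towers on neighboring circles disjoint, as stipulated before \ref{SigmaC}.

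\medskip\noindent\textbf{Genus.} The genus computations (i) and (ii) are Euler-characteristic bookkeeping. The surface is assembled from $k$ (resp.\ $k+1$) punctured Clifford tori and a collection of truncated towers, glued along circles. Using Proposition \ref{tower}(iii), the quotient $\Sk/\langle\transe_{\zz\text{-axis}}^{2m\pi}\rangle$ has genus $(k-1)(m-1)$ and $2k$ ends; on a circle of length $2\pi$ the scale-$1/kmn_j$ tower $\frac{1}{kmn_j}\Stildekm(\cdots)$ wraps $kmn_j$ times around the period $2\pi/(kmn_j)\cdot m\cdot$(appropriate factor), and one computes its contribution to the Euler characteristic accordingly; the $2k$-end towers along $C_1,C_2$ each contribute the genus count $k(k-1)m n_i$, the $4$-end ($\kC=2$) towers along the $C'_j$ contribute the terms with $n'_{\pm1}$, and the $+1$ comes from the closing-up (the surface being connected and orientable). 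I would carry this out by cutting $\Sigma$ along all the intersection circles, computing $\chi$ of each piece (a punctured torus of known puncture count, or a truncated tower of known genus and end count), and summing, being careful about how many tower-periods occur; then $\mathrm{genus}=1-\chi/2$.

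\medskip\noindent\textbf{The scaffolding and symmetry claims (iii),(iv).} For (iii) and (iv) I would first check $\Ckm\subset M$ (resp.\ $\Ckmp\subset N$): by Proposition \ref{tower}(i) each tower $\Sk$ contains $\Ck=\hhat\cap\vhat$, the wing-straightening map $\xkm$ fixes $\Ck$ pointwise (it only alters the $\yy$-coordinate on the wings, which are disjoint from $\Ck$), and $\Phi$ carries the relevant lines of $\frac{1}{kmn_j}\Ck$ to the circles comprising $\Ckm$ — this is exactly the computation recorded after \ref{E:Phi} (e.g.\ $\Phi(\{\xx=\yy=0\})=C_1$, $\Phi(\{\yy=\zz=0\})=C_{0,0}$, $\Phi(\{\xx=\pi/4,\yy=0\})=C'_1$) scaled and rotated appropriately, matched against the definition \ref{scaff} of the scaffoldings. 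Since the Clifford-torus pieces also contain their portions of these circles, $\Ckm\subset M$. For the group inclusion, I would show each generator of $\Grp_{k,m}$ (namely $\rot_{C_{0,0}}^\pi$, $\rot_{C_1}^{2\pi/km}\rot_{C_2}^{2\pi/km}$, $\rot_{C_1}^{2\pi/k}$, by the last lemma of Section \ref{initial}) preserves $M$: each corresponds under $\Phi$ (via the intertwining relations \ref{phintertwine}) to a symmetry of the towers in $\Gsym(\Sk)$ (recall Assumption \ref{sa} guarantees $\rot_C^{2\pi/\kC}\in\Gsym(\Sigma)$ and $\rot_{C_{0,0}}^\pi\in\Gsym(\Sigma)$, and by the remark after \ref{sa} imposing \ref{sa} is equivalent to $\Grefl(\Ckm)\subset\Gsym(\Sigma)$), so it permutes the tower pieces and permutes the torus pieces $\T_j$ among themselves, hence preserves $M$. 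Claim (iv) for $N$ is identical but uses $\Grp'_{k,m}=\langle\Grp_{k,2m},\rot_{C'_1}^\pi\rangle$ and the corresponding intertwining and alignment data, noting that $\rot_{C'_1}^\pi$ swaps the two $\Phi$-images indexed by $j=0,1$ in the first line of the definition of $N$ and acts compatibly on the $\T_j$ and the $C'$-towers.

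\medskip\noindent The step I expect to be the genuine obstacle is the embeddedness verification: closedness and smoothness are essentially forced by the construction, and the genus and symmetry claims are bookkeeping, but confirming that the inserted towers do not collide with one another or with the torus pieces requires quantitative control — one must know the truncation radii $s_C$ are small enough relative to the separation between consecutive intersection circles on each torus (which shrinks as $m$ grows, but the tower scale $1/\mC$ shrinks faster), and one must use that $\Phi$ is a near-isometry on these small scales via \ref{phullback} so that the Euclidean embeddedness of the towers (Proposition \ref{tower}) transfers to $\Sph^3$. This is where the hypothesis $a>\max(R_k,R_2)$ and the specific choices of scaling in Definition \ref{initsdef} are used in an essential, rather than cosmetic, way.
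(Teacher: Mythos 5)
Your proposal is correct and reaches the same conclusions, but your genus argument takes a genuinely different (if logically equivalent) route from the paper's. The paper argues topologically and recursively: it pairs the $2k$ components of $\Wcal_k\backslash(C_1\cup C_2)$ into $k$ topological tori, takes their connected sum (producing genus $k$ at the cost of one fundamental tower period), and then observes that each additional period along $C_1$ or $C_2$ contributes $k-1$ handles, so $\mathrm{genus}=k+(k-1)\bigl(km(n_1+n_2)-1\bigr)=k(k-1)m(n_1+n_2)+1$, with an analogous handle count for $N$. You instead propose to cut $\Sigma$ along the boundary circles between tower and toral regions, use Proposition \ref{tower}(iii) for the genus and end count of each truncated tower, sum Euler characteristics (the torus pieces being annuli of $\chi=0$), and convert via $\mathrm{genus}=1-\chi/2$; carrying that out does recover $\chi(M)=-2k(k-1)m(n_1+n_2)$ and the stated genus. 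The two computations are equivalent, but the paper's is shorter and more visibly tied to the geometry of the towers being inserted, while yours is more mechanical and would be easier to audit if one distrusted the handle bookkeeping. On the other points you are aligned with the paper: closedness, smoothness, and embeddedness are (as the paper says) immediate from the definitions and the exponential wing decay in \ref{towerdecay} together with the fact that the various pieces occupy disjoint regions of $\Sph^3$ by construction -- you flag this as the ``genuine obstacle'' but the paper treats it as clear, and indeed your own discussion (via \ref{phullback}, the choice of $s_C$, and $a>\max(R_k,R_2)$) shows why it is not delicate. Your argument for (iii) and (iv), using that $\xkm$ fixes $\Ck$ pointwise, that $\Phi$ maps the horizontal lines to great circles of the scaffolding, and that the intertwining \ref{phintertwine} transports $\Gsym(\Sk)$-symmetries to $\Gsym(\Wcal_k)$-symmetries, is exactly the paper's argument. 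One small imprecision worth noting: in the informal attribution ``each contribute the genus count $k(k-1)mn_i$'' you suppress the $-(k-1)$ correction from the boundary (the genus of the tower piece around $C_i$ is $(k-1)(kmn_i-1)$, not $k(k-1)mn_i$), but this washes out against the boundary terms in the full Euler characteristic sum, and you do say that sum is how you would actually compute.
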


\begin{proof}
The closedness, smoothness, and embeddedness of the initial surfaces are clear from the definition and preceding discussion. For (i), the components of $\Wcal_k \backslash (C_1 \cup C_2)$ may be grouped into pairs of consecutive (in the sense of rotations about either circle) components, and the two members of each pair may then be glued to each other along $C_1$ and $C_2$ to form $k$ new (topological) tori. We get the connected sum of these tori, a surface of genus $k$, at the cost of one fundamental period of a tower along $C_1$ (or $C_2$). Each additional fundamental period, of the towers along $C_1$ and $C_2$, then contributes $k-1$ handles to the resulting surface.

The genus of $N$ is similarly calculated. Each portion of $\T'$ between two consecutive circles of intersection is glued to two portions of tori, both from alternately the $C_1$ or the $C_2$ side, orthogonally intersecting it, where these latter two are themselves glued along the circle where they intersect (so either $C_1$ or $C_2$). In this way we obtain $2k$ topological tori, whose connected sum we take at the cost of one fundamental period for all but one tower on $\T'$. Each additional period of each of these towers contributes one handle to the resulting surface, while each period of each of the remaining two towers contributes $k-1$ handles.

The great circles in the scaffoldings pass uninterrupted through the desingularized circles of intersection by virtue of the positioning and scaling in \ref{initsdef} and the fact that $\Phi$ maps the horizontal lines on the Euclidean towers to great circles. That reflections through these geodesics belong to the stabilizers of the initial surfaces follows from the intertwining \ref{phintertwine} by $\Phi$
of symmetries of $\Sk$ (\ref{tower})
with symmetries of the configurations (\ref{configsym}) to be desingularized.
\end{proof}

\begin{definition}
\label{unitnormal}
Since each initial surface is embedded in $\Sph^3$,
it is also orientable and therefore possesses
a unique global unit normal, henceforth denoted $\nu$, which points in the positive direction (meaning toward $(i,0)$) along $C_1$ at $(1,0) \in \C^2 \subset \Sph^3$.
We will write $g$ for the metric on each initial surface
induced by its defining embedding in $(\Sph^3,\gsph)$,
$A$ for its $\nu$-directed scalar-valued second fundamental form,
and $H$ for its $\nu$-directed scalar-valued mean curvature.
\end{definition}

\subsection*{Remarks on additional symmetries and the surfaces of Choe and Soret}
Before proceeding with the construction, we pause to elaborate briefly
on the symmetry groups, that is the full stabilizers in $O(4)$, of the initial surfaces,
in one particular class of highly symmetric cases. 
The groups presented in the above proposition
are the minimum symmetry groups enforced throughout the construction,
but in general each such group will be properly contained in the symmetry
group of a given initial surface (consistent with that group) as well as of the 
corresponding final minimal surface. To illustrate, consider the initial
surfaces $M(k,m,1,1,\sigma)$ of type $M$ with $n_1=n_2=1$.
Whatever the values of $k$, $m$, and $\sigma$, the full symmetry group
here will always contain reflection through not just 
$C_{\frac{j\pi}{km},\frac{(j+\ell m)\pi}{km}}$
for all $j,\ell \in \Z$ (already represented in $\Grp_{k,m}$)
but also $C_{\frac{j\pi}{2km},\frac{(j+\ell m)\pi}{2km}}$
for $j, \ell$ odd.

When $m$ is odd, $M(k,m,1,1,0)$ and $M(k,m,1,1,1)$ are equivalent under
ambient isometries
(as discussed in the Alignment subsection immediately preceding \ref{initsdef}),
so we may assume $\sigma=0$. The full symmetry
group then admits $\rot_{C'_1}^\pi$, excluded from $\Grp_{k,m}$,
and therefore also $\rot_{C'_j}^\pi$ for every odd $j$,
because (see \ref{symcomp}) for every integer $j$
the product
$\rot_{C'_{j+2}}^\pi\rot_{C'_j}^\pi
=\rot_{C_1}^{2\pi/k}\rot_{C_2}^{-2\pi/k} \in \Gsym(M)$.
(When $\sigma=1$, one has instead reflection through $C'_j$ for each even $j$,
because
$\rot_{C'_2}^\pi=
\left(\rot_{C_1}^{\pi/k}\rot_{C_2}^{\pi/k}\right)\rot_{C_2}^{2\pi/k}\rot_{C'_1}^\pi$
and by assumption $\pi/k$ is an odd multiple of $\pi/kmn$).
In this case there are no other circles of reflection on $\T'$ that are
parallel to $C_1$ and $C_2$ through tori right-handed along them.
Indeed the only such circles through which reflection preserves $\Wcal_k$
are the $C_j$ for $j \in \frac{1}{2}\Z$.
That even values of $j$ are inadmissible follows from the last
parenthetical remark.
That half-integer values are inadmissible follows also from \ref{symcomp}
since $\rot_{C'_{3/2}}^\pi\rot_{C'_1}^\pi=\rot_{C_1}^{\pi/2k}\rot_{C_2}^{\pi/2k}\rot_{C_2}^{-\pi/k}$
takes for example $(1,0) \in M$ to $(e^{-i\pi/2k},0) \not \in M$,
since $\pi/2k$ is a nonintegral multiple of the half-period $\pi/km$ when $m$ is odd.
There are, however, also circles of reflection orthogonal to the $C'_j$:
since
$C_{0,0}$ and $C'_1$ are circles of reflection of $M(k,m,1,1,0)$ (for $m$ odd still),
by \ref{symcomp} $C''_0$ is also a circle reflection,
so by \ref{symcomp} again $C''_{2j\pi/km}$ is too for every $j \in \Z$.

When $m$ is even, $M(k,m,1,1,0)$ and $M(k,m,1,1,1)$ are inequivalent.
The first ($\sigma=0$) surface has reflectional symmetry through
$C'_j$ for every integer $j$
(since $\rot_{C'_{j+1}}^\pi\rot_{C'_j}^\pi=\rot_{C_1}^{\pi/k}\rot_{C_2}^{\pi/k} \rot_{C_2}^{-2\pi/k}$,
$m$ is even, and $n=1$).
Again one also has $C''_{2j\pi/km}$ as a circle of reflection
for every $j \in \Z$.
Because $\rot_{C'_{3/2}}^\pi\rot_{C'_1}^\pi=\rot_{C_1}^{\pi/2k}\rot_{C_2}^{\pi/2k}\rot_{C_2}^{-\pi/k}$
preserves the initial surface if and only if $\pi/2k$ is an odd number of half-periods,
one has as well reflectional symmetry through $C'_j$ for every half-integer $j$
precisely when $m$ is not divisible by $4$.

The second ($\sigma=1$) surface
has symmetry group including 
$\rot_{C'_j}^{\pi}\rot_{C_1}^{\frac{\pi}{km}}\rot_{C_2}^{\frac{\pi}{km}}$
and excluding $\rot_{C'_j}^\pi$
for every integer $j$.
Thus $C''_0$ is not a circle of reflection,
but instead $C''_{j\pi/km}$ is for every $j \in 2\Z+1$.
When $m$ is divisible by $4$ the symmetry group also includes
reflection through $C'_j$ for every half-integer $j$ (but for no integer $j$),
but when $m$ is not divisible by $4$ there are no circles of reflection on $\T'$
parallel to $C_1$ and $C_2$ through tori right-handed along them.

Note that this description of the initial surfaces $M(k,m,1,1,\sigma)$
with $m$ even is consistent
with the properties of the surfaces constructed by Choe and Soret in \cite{CS};
specifically the genus and symmetries of the $\sigma=0$ and $\sigma=1$ surfaces match those 
of, respectively, the \emph{odd} and \emph{even} surfaces in \cite{CS}.

\begin{remark}
\label{r:unique} 
Note that to prove that the surfaces we construct in \ref{mainthm} 
of type $M$ with $m$ even and $n_1=n_2=1$ are the same
surfaces found in \cite{CS}, 
it is enough to prove the uniqueness of the solutions to the Plateau problems in \cite{CS}. 
Although this seems very likely to be true, 
we do not have a proof at the moment. 
Note also that in \cite{Lawson} Lawson claims uniqueness for the solution to his Plateau problem. 
\end{remark}

\section{The extended standard regions}
Every initial surface is covered by certain open sets,
which we call \emph{extended standard regions}, of two types.
Regions of the first type are indexed by the circles of intersection.
Given a particular initial surface $\Sigma$,
for any circle $C$ of intersection in the corresponding initial configuration we define
$\mC$
(suppressing dependence on the given initial surface)
to be the number of fundamental periods of the tower in $\Sigma$ wrapped around $C$,
so that (recall \ref{initsdef})
$\mC$ is the product of $m$
with the appropriate factor involving $k$ or $2$
as well as $n$, $n_1$, or $n_2$, depending on $\Sigma$ and $C$:
  \begin{equation}
    \mC
    :=
    \begin{cases}
      kmn_j \mbox{ if $\Sigma=M(k,m,n_1,n_2,\sigma)$ and $C=C_j$} \\
      2kmn \mbox{ if $\Sigma=N(k,m,n,n'_1,n'_{-1},\sigma'_1,\sigma'_{-1})$ and $C=C_1$ or $C=C_2$} \\
      2kmn'_{(-1)^j} \mbox{ if $\Sigma=N(k,m,n,n'_1,n'_{-1},\sigma'_1,\sigma'_{-1})$
                              and $C=C'_j$.}
    \end{cases}
  \end{equation}
We similarly define
  \begin{equation}
    \kC
    :=
    \begin{cases}
      k \mbox{ if $C=C_1$ or $C=C_2$} \\
      2 \mbox{ otherwise}.
    \end{cases}
  \end{equation}

Then, recalling \ref{adef}, we let
\begin{equation}
S[C]:=\left\{p \in \Sigma: d_{\gsph}(p,C) < \frac{a}{\mC}\right\},
\end{equation}
where $d_{\gsph}(p,C)$ denotes the distance in $\Sph^3$ between $p$ and $C$.
This region is naturally identified
with a truncated Karcher-Scherk tower
$\SkC(a):=\SkC \cap \{\sqrt{\xx^2+\yy^2}<a\}$ via the map
\begin{align}
&\phi_{_{C,m}}:  \R^3 \to \Sph^3, \text{ defined by} \\
&\phi_{_{C,m}}(\xx,\yy,\zz) := \rot[C] \Phi \left(\frac{(\xx,\yy,\zz)}{\mC}\right),
\end{align}
where $\rot[C]$ is an element of $SO(4)$ chosen so that 
$S[C] = \varphi_{C,m} \left( \SkC(a) \right)$.
Sometimes we may suppress the dependence on $m$, writing simply $\phiC$.
In turn we define $\xC: \SkC(a) \to S[C]$ by
  \begin{equation}
    \xC
    :=
    \left. \phiC \circ \iota_{_{\SkC}}\right|_{\SkC(a)}
    =\left. \phiC \circ \xkCm \right|_{\SkC(a)}.
  \end{equation}

A new constant $b>\max(R_2,R_k)$, to be determined later, dictates the extent of the second type of region. It will be chosen independently of $m$ but large enough so that each such region closely approximates a Clifford torus.
Regions of the second type in an initial surface $\Sigma$
are indexed by the connected components
of the complement of the circles of intersection
in the initial configuration that $\Sigma$ desingularizes.
Given such a component $T$, with boundary $\partial T = C \cup D$,
we may assume---if necessary
by redefining $\phi_C$ by precomposition with a symmetry of $\SkC$
and $\phi_D$ by precomposition with a symmetry of $\mathcal{S}_{k_{_D}}$---that
$T$ has inward unit conormals $\mC d\phiC \partial_{\xx}$
and $m_{_D}d\phi_{_D} \partial_{\xx}$ along $C$ and $D$.
Then, recalling \ref{tower} and for any integer $j \geq 2$ setting
\begin{equation}
  H_j := \{ (\xx,\cutoff{a+1}{a}W_j(\xx,\zz),\zz) 
    \in \R^3 : \xx \in (b,a+1], \, \zz \in \R \},
\end{equation}
we define
\begin{equation}
\label{STdef}
  S[T] := \phiC \left(H_{\kC}\right) \cup 
    \phi_{_D}\left(H_{k_{_D}}\right)
    \cup (\Sigma \cap T \cap \{d_{\gsph}(\cdot,C)>b/\mC\} \cap d_{\gsph}(\cdot,D)>b/m_{_D}\}).
\end{equation}
This region is naturally identified with
\begin{equation}
T_b
:=
\left\{p \in T : d_{\gsph}(p,C)>\frac{b}{\mC}
\text{ and } d_{\gsph}(p,D)>\frac{b}{m_{_D}}\right\}
\end{equation}
via the map
\begin{align}
&\varpi_{_T}=\varpi_{_{T,m}}: S[T] \to T_b, \text{ defined by} \\
&\varpi_{_{T,m}}(p) = \begin{cases}
\phiC \circ \pi_{\xx\zz} \circ \phiC^{-1}(p)
\text{ if } p \in \phiC \left(H_{\kC}\right)  \\
\phi_{_D} \circ \pi_{\xx\zz} \circ \phi_{_D}^{-1}(p)
\text{ if } p \in \phi_{_D} \left(H_{k_{_D}}\right)  \\
p \text{ otherwise},
\end{cases}
\end{align}
where $\pi_{\xx\zz}: \R^3 \to \R^3$ is Euclidean orthogonal projection onto the $\xx\zz$-plane. It is immediate that $\varpi_{_{T,m}}$ is well-defined and smooth.

Given an initial surface $\Sigma$, we write $\intcirc(\Sigma)$ for the collection of circles of intersection in the corresponding initial configuration and 
$\comptor(\Sigma)$ for the collection of components of the complement of 
$\bigcup_{C \in \intcirc(\Sigma)} C$ in the initial configuration. Then 
$\Sigma =\bigcup_{C \in \intcirc(\Sigma)} S[C] 
  \cup \bigcup_{T \in \comptor(\Sigma)} S[T]$,
the members of $\{S[C]: C \in \intcirc(X)\}$ are pairwise disjoint, the members of 
$\{S[T]: T \in \comptor(\Sigma)\}$ are pairwise disjoint, and 
$S[C] \cap S[T] = \emptyset$ unless $C \subset \partial T$.

\begin{figure}
    \includegraphics[width=7in]{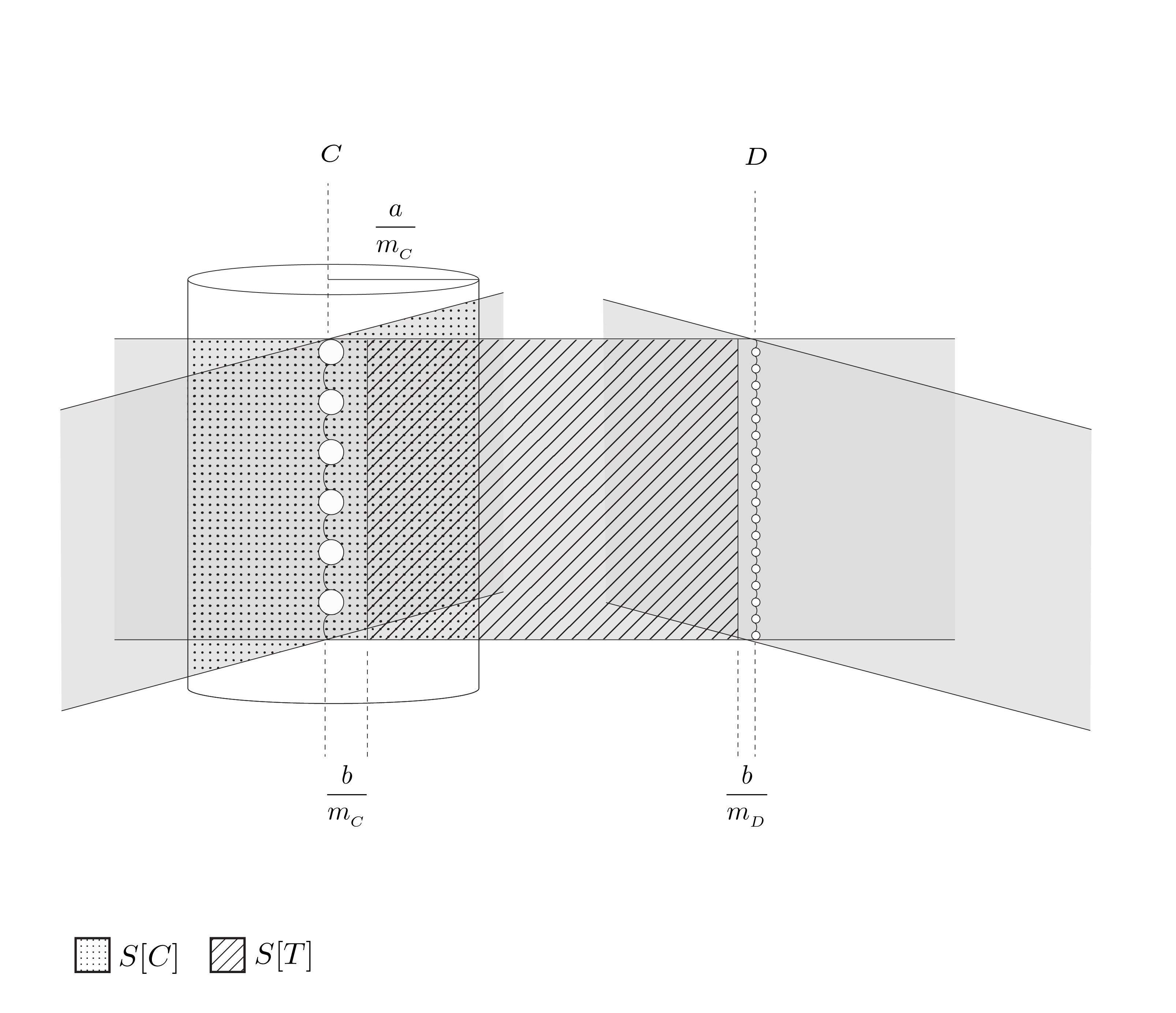}
      \caption{Extended standard regions}
  \end{figure}

Using the diffeomorphisms just defined, the next two propositions compare
the extended standard regions, as embeddings in $(\Sph^3,m^2\gsph)$,
to standard Karcher-Scherk towers and planes in Euclidean space.

\begin{prop}
\label{stdtor}
Let $\Sigma$ be an initial surface
and $T \in \comptor(\Sigma)$ a toral component.
Write $g_{_T}$ for the flat metric on $T$,
and for each point $p \in T_b$
let $d_{\gsph}(\partial T_b,p)$
denote the distance from $p$ to the boundary circles of $T_b$.
Then for every nonnegative integer $\ell$ there exists a constant
$C(\ell)$---independent of $m$---such that
  \begin{enumerate}[(i)]
    \item $\norm{m^2(g-\varpi_T^*g_{_T}): C^\ell \left(
      T^*S[T]^{\otimes 2},\; m^2\varpi_T^*g_{_T},\; 
      e^{-m\varpi_T^*d_{\gsph}(\partial T_b,\cdot)}
      \right)} \leq C(\ell)$;

    \item $\norm{m^{-2}(\abs{A}^2-2): C^\ell \left(
      S[T],\; m^2\varpi_T^*g_{_T},\;
      e^{-m\varpi_T^*d_{\gsph}(\partial T_b,\cdot)}
      \right)} \leq C(\ell)m^{-1}$; and

    \item $\norm{m^{-2}H: C^\ell \left(
      S[T],\; m^2\varpi_T^*g_{_T},\;
      e^{-m\varpi_T^*d_{\gsph}(\partial T_b,\cdot)}
      \right)} \leq C(\ell)m^{-1}$.

  \end{enumerate}
\end{prop}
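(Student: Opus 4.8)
The plan is to read the three estimates off the explicit description \ref{STdef} of $S[T]$. That region is the union of two ``wing collars'' $\phiC(H_{\kC})$, $\phi_{_D}(H_{k_{_D}})$ and a ``flat core'' on which $\Sigma$ lies \emph{exactly} on the Clifford torus $T$ --- once $\xx$ is large enough the straightened wing has become an honest half-plane, whose $\Phi$-image lies on $T$. On the core $\varpi_T$ is the identity and $g-\varpi_T^*g_{_T}$, $\abs{A}^2-2$, and $H$ vanish identically, so the three estimates are trivial there with no help from the weight, and the whole content lies in the two collars; by symmetry it suffices to treat the one near $C$. On it the surface is, by construction, the image under $\varphi_{_{C,m}}=\rot[C]\circ\Phi\circ(\mC^{-1}\,\cdot\,)$ of the graph $G(\xx,\zz):=\bigl(\xx,\ \cutoff{a+1}{a}(\xx)\,W_{\kC}(\xx,\zz),\ \zz\bigr)$ over the $\xx\zz$-plane, $\xx\in(b,a+1]$, while $\varpi_T$ is $\varphi_{_{C,m}}$ composed with the projection $(\xx,\zz)\mapsto(\xx,0,\zz)$; I would carry out the entire computation in this single chart.

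Three ingredients drive everything. First, \ref{tower}(viii) together with the boundedness of $\Psi$ and its derivatives gives $\bigl|\partial_\xx^j\partial_\zz^\ell\bigl(\cutoff{a+1}{a}W_{\kC}\bigr)\bigr|\le C(\kC,j+\ell)\,e^{-\xx}$ for $\xx\ge b$. Second, \ref{phullback} shows $\bigl(\Phi\circ(\mC^{-1}\,\cdot\,)\bigr)^*\gsph$ equals $\mC^{-2}\geuc$ plus error terms proportional to $d\theta$ --- hence vanishing along the Clifford torus $\{\yy=0\}$ --- with coefficients bounded because $\rr/\mC\le(a+1)/\mC<\pi/2$ on the collar. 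Third, $\mC$ is a fixed positive-integer multiple of $m$ (the ratio $\mC/m$ is $kn_1$, $kn_2$, $2kn$, or $2kn'_{\pm1}$ according to the data), so all quotients of powers of $\mC$ and $m$ that arise are bounded. Because of the second ingredient, the induced metric, second fundamental form, and mean curvature of the collar differ from those of $T$ only through terms carrying at least one factor of $\cutoff{a+1}{a}W_{\kC}$ or one of its derivatives; viewing the collar as a normal graph of size $O(e^{-\xx}/\mC)$ over $T$, and using that $T$ is minimal in $\Sph^3$ with $\abs{A_{_T}}^2=2$ and Jacobi operator $\Delta_{_T}+4$, one expands $g-\varpi_T^*g_{_T}$, $\abs{A}^2-2$, and $H$ as first- and second-order expressions in these exponentially small quantities with bounded coefficients, plus quadratic remainders of the same type.

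It remains to match the resulting bounds to the weight. Under the identifications, $\varpi_T^*d_{\gsph}(\partial T_b,\cdot)$ equals $(\xx-b)/\mC$ in the chart --- the foot $\varphi_{_{C,m}}(\xx,0,\zz)$ on $T$ lies at $\gsph$-distance $\xx/\mC$ from $C$ by \ref{phullback} --- so the weight is $e^{-(m/\mC)(\xx-b)}$ with $0<m/\mC\le1$. Since $m/\mC<2$, one has $\xx^N e^{-2\xx}\le C_N\,e^{-(m/\mC)(\xx-b)}$ for $\xx\ge b$, and likewise with $e^{-\xx}$ in place of $e^{-2\xx}$, so every term in the expansions is absorbed by the weight; multiplying by the appropriate power of $m$, and noting that passing to the reference metric $m^2\varpi_T^*g_{_T}$ --- a constant rescaling --- converts the prefactor $m^2$ in (i) into a cancellation of that rescaling, leaves the $C^0$-size of the scalars in (ii) and (iii) unchanged, and improves higher covariant derivatives by powers of $m^{-1}$, yields the three bounds with $C(\ell)$ depending on $\ell$, $k$, $b$, $R_k$, $R_2$, and the ratios $\mC/m$, $m_{_D}/m$. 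The main obstacle is exactly this last bookkeeping: one must check that every term produced in expanding the collar's geometry --- in particular the genuinely nonlinear remainders, whose coefficients involve the Euclidean wing and the non-Euclidean part of $\Phi^*\gsph$ nonlinearly --- decays in $\xx$ at worst like a fixed power of $\xx$ times $e^{-\xx}$, so as to be dominated by $e^{-(m/\mC)(\xx-b)}$, and that the surviving powers of $\mC$ and $m$ combine to the claimed orders, notably the gain of $m^{-1}$ in (ii) and (iii). Organizing everything in the chart $(\xx,\zz)$ and peeling off the Clifford-torus contribution first keeps this manageable, and the gluing subinterval $[a,a+1]$ needs no special treatment, since there $\cutoff{a+1}{a}W_{\kC}=O(e^{-a})=O(e^{-m\pi/4})$ is negligible.
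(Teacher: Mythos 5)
Your proposal is correct and takes essentially the same approach as the paper: fix the chart $\rot[C]\Phi$ covering $T$, realize $S[T]$ as a graph $\yy=f(\xx,\zz)$ over $T$ whose graph function inherits the exponential decay \ref{towerdecay} of $W_{\kC}$ (together with boundedness of the cutoff), exploit the boundedness of $\Phi^*\gsph$ and its derivatives on the tubular neighborhood of $C$ from \ref{phullback}, and match the resulting exponential decay in $\xx$ against the weight $e^{-m\varpi_T^*d_{\gsph}(\partial T_b,\cdot)}$ using $m/\mC\le 1$. The only cosmetic deviation is that you organize the estimates for $|A|^2-2$ and $H$ through the linearization (Jacobi operator $\Delta_T+4$) of a normal graph over $T$, whereas the paper writes out $g_{ij}$ and $A_{ij}$ explicitly in terms of $f$, the metric components of $\gsph$, and its Christoffel symbols; both routes depend on the same decay and bounded-geometry inputs and yield the same bookkeeping.
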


\begin{proof}
We select a boundary circle $C$ of $T$ and a rotation $\rot[C] \in SO(4)$
so that $T \subset \rot[C] \Phi(\{\yy=0,\;x > 0\})$. It suffices to establish
the estimates within 
$\mathcal{U}:=\{p \in \Sph^3 \; : \; d_{\Sph^3}(p,C) < \frac{a+1}{\mC}\}$.
Using \ref{phullback} we find
  \begin{equation}
  \label{gsph}
    \begin{aligned}
      \Phi^*\rot[C]^*\gsph = &d\xx^2 + d\yy^2 + d\zz^2
      + \frac{\sin^2 \rr - \rr^2}{\rr^4} 
      \left(
        \yy^2\, d\xx^2 + \xx^2\,d\yy^2 - 2\xx\yy\,d\xx\,d\yy
      \right) \\
      &+ 2\frac{\sin^2 \rr}{\rr^2}
      \left(
        \xx\,d\yy\,d\zz - \yy\,d\xx\,d\zz
      \right),
    \end{aligned}
  \end{equation}
whose components and whose inverse's components have (coordinate) derivatives of
all orders bounded on
$\Phi^{-1}\rot[C]^{-1}\mathcal{U}
\subset \{\sqrt{\xx^2+\yy^2}<\pi/4\} \subset \R^3$.
Moreover $\varpi_T$ identifies
$S[T]$ as a graph over $T$ so that
  \begin{equation}
    \begin{aligned}
      &S[T] \cap \mathcal{U} =
        \rot[C]\Phi\{(\xx,f(\xx,\zz),\zz) \; : \; 
        \rot[C]\Phi(\xx,0,\zz) \in \varpi_T^{-1}\left(S[T] \cap \mathcal{U}\right)\},
        \mbox{ where} \\
      &f(\xx,\zz) = \mC^{-1}W_{\kC}(\mC\xx,\mC\zz)
        \cutoff{a+1}{a}(\mC\xx),
    \end{aligned}
  \end{equation}
recalling $W_{\kC}$ from \ref{tower}. From \ref{towerdecay} we have for any nonnegative
integers $j$ and $\ell$ the existence of a constant $C(j,\ell)$ ensuring the estimate
  \begin{equation}
  \label{fest}
    \abs{\partial_{\xx}^j\partial_{\zz}^{\ell}f(\xx,\zz)}
    \leq C(j,\ell)m^{j+\ell-1}e^{-m\xx}
  \end{equation}
for any $(\xx,\zz) \in \left[\frac{b}{\mC},\frac{a+1}{\mC}\right]$.

Now, via $\rot[C]\Phi$,
the coordinates $(\xx,\yy,\zz)$ on $\R^3$ transfer to
$\mathcal{U} \subset \Sph^3$
and the functions $(\xx,\zz)$ restrict to coordinates on $T$, so
that $g_{_T} = d\xx^2 + d\zz^2$ and
  \begin{equation}
    \begin{aligned}
      \left[\left(\varpi_T^{-1}\right)^*g\right]_{ij}(\xx,\zz) =
      &\left[\gsph\right]_{ij}(\xx,f(\xx,\zz),\zz)
      + f_{,i}(\xx,\zz)\left[\gsph\right]_{i\yy}(\xx,f(\xx,\zz),\zz) \\
      &+ f_{,j}(\xx,\zz)\left[\gsph\right]_{j\yy}(\xx,f(\xx,\zz),\zz)
      + f_{,i}(\xx,\zz)f_{,j}(\xx,\zz)\left[\gsph\right]_{\yy\yy}(\xx,f(\xx,\zz),\zz),
    \end{aligned}
  \end{equation}
whence follows the estimate (i) for the metric, in light of \ref{fest} 
and the boundedness of all (coordinate) derivatives of all components of $\gsph$
and its inverse with respect to the $(\xx,\yy,\zz)$ coordinate system as established
in \ref{gsph}.

Assuming the normal $\nu$ on $S[T]$ has positive inner product with $\partial_{\yy}$,
we calculate also
  \begin{equation}
    \begin{aligned}
      A_{ij}(\xx,\zz) = 
      &\left(
      \left[\gsph\right]^{\yy \yy}(\xx,f(\xx,\zz),\zz)
      - f_{,k}(\xx,\zz)\left[\gsph\right]^{k \yy}(\xx,f(\xx,\zz),\zz)
      + f_{,k}(\xx,\zz)f_{,\ell}(\xx,\zz)\left[\gsph\right]^{k \ell}(\xx,f(\xx,\zz),\zz)
      \right)^{-\frac{1}{2}} \cdot \\
      &[
      \Gamma_{ij}^{\yy}(\xx,f(\xx,\zz),\zz)
      + f_{,ij}(\xx,\zz)
      + f_{,j}(\xx,\zz)\Gamma_{i\yy}^{\yy}(\xx,f(\xx,\zz),\zz) \\
      &+ f_{,i}(\xx,\zz)\Gamma_{j\yy}^{\yy}(\xx,f(\xx,\zz),\zz)
      + f_{,i}(\xx,\zz)f_{,j}(\xx,\zz)\Gamma_{\yy \yy}^{\yy}(\xx,f(\xx,\zz),\zz) \\
      &- f_{,k}(\xx,\zz)\Gamma_{ij}^k(\xx,f(\xx,\zz),\zz)
      - f_{,k}(\xx,\zz)f_{,j}(\xx,\zz)\Gamma_{i\yy}^k(\xx,f(\xx,\zz),\zz) \\
      &- f_{,k}(\xx,\zz)f_{,i}(\xx,\zz)\Gamma_{j\yy}^k(\xx,f(\xx,\zz),\zz)
      - f_{,k}(\xx,\zz)f_{,i}(\xx,\zz)f_{,j}(\xx,\zz)
        \Gamma_{\yy \yy}^k(\xx,f(\xx,\zz),\zz)
      ],
    \end{aligned}
  \end{equation}
where $i,j,k,\ell \in \{\xx,\zz\}$ and each instance of $\Gamma$ is a Christoffel
symbol of $\gsph$ in the $(\xx,\yy,\zz)$ coordinate system. Noting that the squared
norm of the second fundamental form of $T$ is simply $2$ and that $T$ is minimal, 
we obtain (ii) and (iii), using again
the estimates \ref{fest} and the boundedness exhibited by \ref{gsph}.
\end{proof}

\begin{prop}
\label{stdtow}
Let $\Sigma$ be an initial surface
and $C \in \intcirc(\Sigma)$ an intersection circle.
Write $\Ahat_{_C}$ for the second fundamental form
of the inclusion $\iota_{_{\SkC}}: \SkC \to \R^3$
relative to the rescaled Euclidean metric 
$\frac{m^2}{\mC^2}\geuc$ and
the unit normal whose pushforward by $\phi_{_{C,m}}$ has positive $\gsph$ inner product
with $\nu$, and set $\ghat_{_C}=\frac{m^2}{\mC^2}\iota_{_{\SkC}}^*\geuc$.
Then for each nonnegative integer $\ell$ there exists a constant 
$C(\ell)$---independent of $m$---such that
  \begin{enumerate}[(i)]
     \item $\norm{m^2g - {\xC^{-1}}^*\ghat_{_C} :
      C^{\ell}\left(T^*S[C]^{\otimes 2},\; m^2g\right)}
      \leq C(\ell)m^{-1/2}$;

    \item $\norm{m^{-2}\abs{A}^2
      -{\xC^{-1}}^*\abs{\Ahat_{_C}}^2: 
      C^\ell(S[C], \; m^2g)}
      \leq C(\ell)m^{-1/2}$; and

    \item $\norm{m^{-2}H:
      C^\ell(\Sigma, \; m^2g)}
      \leq C(\ell)m^{-3/2}$.
  \end{enumerate}
\end{prop}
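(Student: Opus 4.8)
The plan is to rerun the coordinate computation of Proposition \ref{stdtor}, now on the tower side, via the identification $\xC=\phiC\circ\iota_{_{\SkC}}|_{\SkC(a)}$ (which is a $\mC$-to-one local diffeomorphism onto $S[C]$, so sups and norms transfer). Rescaling \ref{gsph} by the factor $\mC^{-1}$ built into $\phiC$ gives $\phiC^*\gsph=\mC^{-2}\geuc+E$, where $E$ consists of the two terms displayed in \ref{gsph} with each of $\xx,\yy,\zz$ replaced by $\xx/\mC,\yy/\mC,\zz/\mC$; in particular its scalar coefficients are smooth functions of $\bar\rr^2:=\mC^{-2}(\xx^2+\yy^2)$, and on $\SkC(a)$ (and on a fixed-size $\R^3$-neighbourhood of it) one has $\bar\rr<a/\mC+O(\mC^{-1})\le\pi/(4\kC)+o(1)$, a compact subset of $[0,\pi/2)$, so those coefficients and all their derivatives are bounded there. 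Since $\sqrt{\xx^2+\yy^2}<a$ on $\SkC(a)$, the wing-straightening cutoff of \ref{xkm} is identically $1$ on $S[C]$, so $\xkCm=\iota_{_{\SkC}}$ there and no straightening enters (i)--(ii).

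For (i), pulling back by $\xC$ gives $\xC^*(m^2g)-\ghat_{_C}=m^2\iota_{_{\SkC}}^*E$, so one must bound $\iota_{_{\SkC}}^*E$ relative to $\ghat_{_C}=(m^2/\mC^2)\iota_{_{\SkC}}^*\geuc$. The key point is that although $E$ is not small as an \emph{ambient} tensor---the factor $\xx^2$ accompanying $d\yy^2$ reaches size $\sim a^2\sim m^2$ out on the wings---its pullback to $\SkC$ is small: on each wing $\SkC$ is the graph $\yy=W_{\kC}(\xx,\zz)$, so $(\xx\,d\yy-\yy\,d\xx)|_{\SkC}=(\xx W_{\kC,\xx}-W_{\kC})\,d\xx+\xx W_{\kC,\zz}\,d\zz$ has norm $\le C(1+\xx)e^{-\xx}=O(1)$ by \ref{towerdecay}, while on the core $\{\sqrt{\xx^2+\yy^2}<R_{\kC}\}$ all the relevant one-forms are $O(1)$ for free. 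With the boundedness of the scalar coefficients this gives $|\iota_{_{\SkC}}^*E|_{\geuc}=O(\mC^{-3})$ (the $d\zz$ cross-term dominates), hence $\|m^2g-{\xC^{-1}}^*\ghat_{_C}\|_{C^0(\cdot,m^2g)}=O(\mC^{-1})=O(m^{-1})$; the $C^\ell$ versions follow the same way, each coordinate derivative costing only a bounded factor in these units ($W_{\kC}$-derivatives still decay like $e^{-\xx}$, and $\partial_{\xx,\yy}\bar\rr=O(\mC^{-1})$). This is comfortably within the asserted $C(\ell)m^{-1/2}$.

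For (ii) I would use that under $g\mapsto\lambda^2g$ the scalar second fundamental form scales by $\lambda^{-1}$ and $|A|^2$ by $\lambda^{-2}$, reducing the claim to comparing the second fundamental form of $\SkC$ in $\mC^{-2}\geuc+E$ with the one in $\mC^{-2}\geuc$, the latter being exactly ${\xC^{-1}}^*\Ahat_{_C}$ up to the uniform factor $\mC/m$. The main obstacle---the one feature genuinely not present in \ref{stdtor}---is that $E$ really is of size $O(1)$ in the \emph{ambient} normal direction along the far wings, so it cannot be treated there as a small ambient perturbation when extracting $A$. The remedy: write $\mC^{-2}\geuc+E=\mC^{-2}(\geuc+\mC^2E)$ and note that near a wing $\mC^2E$ is, to leading order, of the special form $c(\xx)\,d\yy^2$ with $c=O(\xx^2\mC^{-2})$, $c'=O(\xx\mC^{-2})$, for which the plane $\{\yy=0\}$ remains totally geodesic; since there $\SkC$ is the nearly-flat graph $\yy=W_{\kC}$ with $W_{\kC}$ and all its derivatives $O(e^{-\xx})$, the second fundamental form of $\SkC$ in $\geuc+\mC^2E$ differs from the one in $\geuc$ only through terms of the schematic shape $c\,\nabla^2W_{\kC}$, $c'\nabla W_{\kC}$, $\nabla(\mC^2E)\cdot\nabla W_{\kC}$, each $O(\mC^{-2})$; on the core $\mC^2E=O(\mC^{-1})$ outright. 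Hence $|A|^2_g=\mC^2\,{\xC^{-1}}^*|\Ahat_{_C}|^2+O(\mC)$, and dividing by $m^2$ yields the asserted bound (with the $C^\ell$ version by the same bookkeeping).

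For (iii), which is over all of $\Sigma$, I would split into three pieces. On $S[C]$ and the un-straightened wing parts of any $S[T]$, $\SkC$ is exactly minimal in $\mC^{-2}\geuc$, so $H$ is only the first-variation response to $E$; because the unperturbed mean curvature vanishes, this response is one power of $\mC$ better than a crude estimate---schematically $H=O(\mathrm{tr}\,\nabla E)=O(\mC^{-1})$, the totally-geodesic structure of $c(\xx)\,d\yy^2$ again removing the dangerous wing contribution (what survives there is $O(\xx e^{-\xx}\mC^{-2})$). On the straightening annulus $\xx\in(a,a+1)$ of $S[T]$ the $\R^3$-mean curvature comes only from $\partial^2\bigl(\cutoff{a+1}{a}(\xx)W_{\kC}\bigr)$, which is $O(e^{-a})=O(e^{-m\pi/4})$ and stays exponentially small after applying $\phiC$; and the rest of $S[T]$ lies on an honest (minimal) Clifford torus, so $H\equiv0$ there, with \ref{stdtor}(iii) already controlling a neighbourhood of $\partial T_b$. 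Thus $\sup_\Sigma|H|=O(m^{-1})$ and $\|m^{-2}H\|_{C^\ell(\Sigma,m^2g)}=O(m^{-3})\le C(\ell)m^{-3/2}$. The only step demanding real care is the far-wing second fundamental form analysis in (ii); everything else is the coordinate computation already rehearsed for \ref{stdtor}.
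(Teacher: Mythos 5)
Your proposal takes a genuinely different route from the paper's. The paper's proof is a two-region argument: it first establishes the ambient estimate
$\norm{\phi_C^*\mC^2\gsph - \geuc : C^\ell(\{\sqrt{\xx^2+\yy^2}\le a\},\geuc,(\sqrt{\xx^2+\yy^2}+1)/\mC)}\le C(\ell)$,
transfers this to a weighted bound on $m^2g-{\xC^{-1}}^*\ghat_{_C}$, $m^{-2}\abs{A}^2-{\xC^{-1}}^*\abs{\Ahat_{_C}}^2$, and $m^{-1}H$ with weight $m^{-1}+d_{\gsph}(C,\cdot)$, and then uses the weight only inside the $\gsph$-radius-$m^{-1/2}$ tube around $C$; outside that tube, it invokes Proposition~\ref{stdtor} and the tower's asymptotic flatness. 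You instead exploit the graph structure of the wings over $\{\yy=0\}$ to show the \emph{pullback} $\iota^*(\mC^2\phiC^*\gsph-\geuc)$ decays exponentially on the wings (while being $O(\mC^{-1})$ on the core), avoiding the inner/outer split entirely and never invoking Proposition~\ref{stdtor}. For part (i) this is correct, cleaner, and even yields the stronger bound $O(m^{-1})$.

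There is, however, a genuine gap in your treatments of (ii) and (iii), both of which rest on the claim that ``near a wing $\mC^2E$ is, to leading order, of the special form $c(\xx)\,d\yy^2$,'' so that $\{\yy=0\}$ stays totally geodesic. Compute from \ref{phullback}: in cylindrical coordinates,
$\mC^2\phiC^*\gsph - \geuc = \bigl(\mC^2\sin^2(\rr/\mC)-\rr^2\bigr)d\theta^2 + 2\mC\sin^2(\rr/\mC)\,d\theta\,d\zz,$
and along $\{\yy=0\}$ (where $d\theta=d\yy/\xx$) this reads
$\bigl(\mC^2\sin^2(\xx/\mC)-\xx^2\bigr)\xx^{-2}\,d\yy^2 + 2\mC\sin^2(\xx/\mC)\,\xx^{-1}\,d\yy\,d\zz.$
The first coefficient is $O(\xx^2/\mC^2)$ as you say, but the second is $O(\xx/\mC)$, which for $\xx<\mC$ \emph{dominates}, and it is precisely this twist term that gives the Clifford torus its nonzero second fundamental form: the plane $\{\yy=0\}$ is minimal in $\geuc+\mC^2E$ (it is a Clifford torus) but not totally geodesic — its second fundamental form has magnitude $O(\mC^{-1})$, hence $\abs{A_{\{\yy=0\}}}^2=2/\mC^2$, consistent with $\abs{A_{\T}}^2_{\gsph}=2$. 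Your list of terms ``$c\nabla^2W_{\kC},\ c'\nabla W_{\kC},\ \nabla(\mC^2E)\cdot\nabla W_{\kC}$, each $O(\mC^{-2})$'' therefore omits the leading $O(\mC^{-1})$ contribution; the comparison of second fundamental forms is off by a factor of $\mC$ at the level stated. Likewise for (iii), the claim $\sup_\Sigma\abs{H}=O(m^{-1})$ cannot be right: a correct accounting (using that $\{\yy=0\}$ is minimal, not totally geodesic) shows $H_{\geuc+\mC^2E}(\SkC)=O(\mC^{-1})$, hence $\abs{H_{\gsph}}=\mC\cdot O(\mC^{-1})=O(1)$, off from your claim by a factor of $m$. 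These final orders of magnitude still lie within the asserted bounds $m^{-1/2}$ and $m^{-3/2}$ — the weakness of the stated bounds masks the error — but the argument as written establishes neither the claimed intermediate estimates nor the claimed stronger conclusions for (ii) and (iii). The structural fix is to compare the wings to the Clifford torus (exploiting its minimality in $\gsph$, not total geodesicity in $\geuc+\mC^2E$) rather than forcing the whole of $S[C]$ into a comparison with the flat plane; that is in effect what the paper does by switching to the regime of Proposition~\ref{stdtor} outside the inner tube.
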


\begin{proof}
From \ref{gsph}
  \begin{equation}
    \norm{
    \phi_{_C}^* \mC^2\gsph - \geuc :
    C^\ell\left(\{\sqrt{\xx^2+\yy^2} \leq a \} 
    \subset \R^3, \, \geuc, \,
    \frac{\sqrt{\xx^2+\yy^2}+1}{\mC} \right)
    }
    \leq C(\ell).
  \end{equation}
Since the second fundamental form of $\Sk$ is bounded, as is each of its
covariant derivatives, we obtain
  \begin{equation}
    \begin{aligned}
      &\norm{
        m^2g - {\xC^{-1}}^*\ghat_{_C}:
        C^\ell(S[C], \, m^2g, \, m^{-1}+d_{\gsph}(C,\cdot))
        } \leq C(\ell), \\
      &\norm{
        m^{-2}\abs{A}^2 - {\xC^{-1}}^*\abs{\Ahat_{_C}}^2:
        C^\ell(S[C], \, m^2g, \, m^{-1}+d_{\gsph}(C,\cdot))
      } \leq C(\ell), \mbox{ and}  \\
      &\norm{
        m^{-1}H : C^\ell(S[C], \, m^2g, \, m^{-1}+d_{\gsph}(C,\cdot))
        } \leq C(\ell),
    \end{aligned}
  \end{equation}
establishing the estimates of the proposition on the subset of $S[C]$
within the tubular neighborhood of center $C$ and $\gsph$-radius $m^{-1/2}$.
By assuming $\sqrt{m}>b$, we ensure that the complement of this subset
falls under the regime of the preceding proposition,
which in conjunction with the asymptotic geometry of $\SkC$ itself
completes the proof,
under the further assumption that $e^{\sqrt{m}}>\sqrt{m}$.
\end{proof}

\section{The linearized equation}
\label{linear}

Given an initial surface $\Sigma$, embedded in $\Sph^3$ by 
$X: \Sigma \to \Sph^3$, along with 
a function $u \in C^2(\Sigma)$ with sufficiently small $C^0$ norm,
and recalling (\ref{unitnormal}) the choice $\nu: \Sigma \to T\Sph^3$ of global unit normal,
we define the immersion
  \begin{equation}
    \begin{aligned}
      &X_u: \Sigma \to \Sph^3 \\
      &p \mapsto \exp_{_{X(p)}} u(p)\nu(p),
    \end{aligned}
  \end{equation}
where $\exp: T\Sph^3 \to \Sph^3$ is the exponential map on
$(\Sph^3,\gsph)$. 
Write $\nu_u$ for the global unit normal on $X_u$
which has nonnegative inner product with the velocity field for the geodesics generated by $\nu$
and write $\Hcal[u]$ for the scalar mean curvature of $X_u$ relative to $\nu_u$.
Write $\Grp$ for (recalling \ref{symm}) $\Grp_{k,m}$ when $\Sigma$ is type $M$
and for $\Grp'_{k,m}$ when $\Sigma$ is type $N$.
The main theorem will be proven by selecting
a solution
$u \in C_{\Grp}^\infty(\Sigma)$
(the space of smooth $\Grp$-odd functions on $\Sigma$---recall \ref{invfun})
to $\Hcal[u]=0$, small enough that $X_u$ is an embedding.

To that end we next study the linearization $\Lcal$ at $0$ of $\Hcal$, given by
  \begin{equation}
    \Lcal u = \left.\frac{d}{dt}\right|_{t=0}\Hcal[tu] 
    = \left(\Delta + \abs{A}^2 + 2\right)u,
  \end{equation}
where the constant term $2$ arises as the Ricci curvature of $\gsph$ contracted twice with $\nu$.
Actually, to secure bounds uniform in $m$ and to facilitate the comparison of this operator
on the extended standard regions to certain limit operators, we focus on
  \begin{equation}
    m^{-2}\Lcal: C^{2,\beta}_{\Grp}(\Sigma,m^2g)
    \to C^{0,\beta}_{\Grp}(\Sigma,m^2g).
  \end{equation}
In view of the estimates of the second fundamental form contained in \ref{stdtor}
and \ref{stdtow}, this operator is bounded for any $\beta \in (0,1)$.
The present section is
devoted to obtaining a bounded inverse by first analyzing the operator
``semilocally''---meaning when restricted to spaces of functions defined on each of the various extended standard regions---and by afterward applying an iteration scheme to piece together a global solution.

\subsection*{Approximate solutions on towers}
We first solve the Jacobi equation for the inclusion map
$\iota_{_{\Sk}}: \Sk \to \R^3$
of the exact Karcher-Scherk towers of standard size,
given data with sufficiently small support.
To avoid the introduction of substitute kernel needed in more complicated gluing
constructions, we impose the symmetries $\Grp$ induces on the limit tower.
Specifically, set $\ghat=\iota_{_{\Sk}}^*\geuc$, write $\Ahat$ for the second fundamental
form of $\iota_{_{\Sk}}: \Sk \to \R^3$, and for each positive integer $n$ let
  \begin{equation}
    \Grphat:=\Grphat_{k,n}:=\Grefl(n\Ck)
    =
    \left\langle
      \rote_{\xx \text{-axis}}^\pi, \, \rote_{\zz \text{-axis}}^{2\pi/k}, \,
      \transe_{\zz \text{-axis}}^{2n\pi}
    \right\rangle
    \subsetneq \Gsym(\Sk).
  \end{equation} 
Then the Jacobi operator
  \begin{equation}
    \Lhat = \Delta_{\ghat} + \abs{\Ahat}^2
  \end{equation}
defines a bounded linear map 
$\Lhat: C_{\Grphat}^{2,\beta}\left(\Sk,\ghat\right)
  \to C_{\Grphat}^{0,\beta}\left(\Sk,\ghat\right)$
for any $\beta \in (0,1)$ and integers $k \geq 2$ and $n \geq 1$.
Given $b>0$ we recall that $\Sk(b)=\Sk \cap \{\sqrt{\xx^2+\yy^2}<b\}$
and we set
  \begin{equation}
    C_{c,\; \Grphat}^{0,\beta}\left(\Sk(b),\ghat \right)
    := \left\{ u \in C_{\Grphat}^{0,\beta}\left(\Sk,\ghat \right) \mbox{
       having support compactly contained in } \Sk(b) \right\}.
  \end{equation}

\begin{prop}
\label{towsol}
Fix $\beta \in (0,1)$, $b>0$, and integers $k \geq 2$ and $n \geq 1$. 
Then there exists a linear map
  \begin{equation}
    \Rtowkn : C_{c, \; \Grphat}^{0,\beta}\left(\Sk(b),\ghat\right)
    \to
    C_{\Grphat}^{2,\beta}\left(\Sk,\ghat\right)
  \end{equation}
and there exists a constant $C$---depending on just $\beta$, $b$, $k$, and
$n$---such that for any $f$ in the domain of $\Rtowkn$ we have
$\Lhat \Rtowkn f = f$ and
  \begin{equation}
    \norm{\Rtowkn f : C^{2,\beta}\left(\Sk,\ghat\right)}
    \leq
    C \norm{f: C^{0,\beta}\left(\Sk,\ghat \right)}.
  \end{equation}
\end{prop}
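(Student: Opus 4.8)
The plan is to reduce the proposition to the single assertion that $\Lhat$ has trivial kernel on $C^{2,\beta}_{\Grphat}(\Sk,\ghat)$, i.e.\ that the only bounded $\Grphat$-equivariant Jacobi field on $\Sk$ is zero. Granting this, the operator $\Rtowkn$ is automatically well defined, linear, and bounded: for $f$ in the stated domain one produces \emph{some} solution $u\in C^{2,\beta}_{\Grphat}(\Sk,\ghat)$ of $\Lhat u=f$ with $\norm{u:C^{2,\beta}(\Sk,\ghat)}\le C(\beta,b,k,n)\norm{f:C^{0,\beta}(\Sk,\ghat)}$, and injectivity of $\Lhat$ forces this $u$ to be unique, hence $f\mapsto u$ linear, with the estimate supplying the bound. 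So the real content is the triviality of the kernel together with the existence of one solution carrying a uniform estimate; the former I expect to be the main obstacle.

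I would first dispatch the analysis on the wings. By \ref{tower}.viii, outside a large cylinder $\Sk$ is a disjoint union of $2k$ graphs of the exponentially decaying function $W_k$; after passing to the quotient by $\transe_{\zz\text{-axis}}^{2n\pi}$ each becomes a half-cylinder $[R,\infty)\times(\R/2n\pi\Z)$ on which, by \ref{towerdecay}, $\ghat$ is exponentially close to the flat product metric and $\abs{\Ahat}^2$ decays exponentially, so $\Lhat$ is an exponentially small perturbation of $\Delta$. The crucial point is that $\Grphat$ contains the reflection $\rote_{\xx\text{-axis}}^\pi$ in the straight line lying in a given wing at height $\zz=0$ as well as $\transe_{\zz\text{-axis}}^{2n\pi}\rote_{\xx\text{-axis}}^\pi$, the reflection in the line at height $\zz=n\pi$, both odd symmetries preserving that wing; hence a $\Grphat$-equivariant function restricted to any cross-sectional circle is odd about both $\zz=0$ and $\zz=n\pi$, and in particular has vanishing mean. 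Consequently a bounded $\Grphat$-equivariant solution of $\Lhat u=0$ on a wing has no zeroth Fourier mode, so it and its derivatives decay exponentially and are controlled there by the $C^0$ norm of $u$ on the inner boundary circle; the same reasoning on the full cylinder $\R\times(\R/2n\pi\Z)$ shows the only bounded equivariant $\Delta$-harmonic function there is $0$.

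Next, the triviality of the kernel, which I would establish by the Montiel--Ros method \cite{MR} as in the case $k=2$ treated in \cite{KapJDG}. Let $u\in C^{2,\beta}_{\Grphat}(\Sk,\ghat)$ with $\Lhat u=0$. Being $\Grphat$-odd, $u$ vanishes on every line of $n\Ck\subset\Sk$, which cut $\Sk$, modulo $\transe_{\zz\text{-axis}}^{2n\pi}$, into finitely many fundamental pieces, and on each of these $u$ solves $\left(\Delta+\abs{\Ahat}^2\right)u=0$ with zero Dirichlet data on the cuts. Since $\Sk$ is minimal in $\R^3$ one has $\abs{\Ahat}^2=\abs{dN}^2$ for the conformal Gauss map $N\colon\Sk\to\Sph^2$, so $\Lhat$ is a Schr\"odinger operator of the type analyzed by Montiel and Ros; combining the bound their method gives for the nullity of such an operator on a fundamental piece with the vanishing Dirichlet data and the residual symmetries then forces $u\equiv0$ on each piece, hence on all of $\Sk$. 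Carrying out this count---identifying the conformal type of $\Sk$, the image and branching of $N$, and a workable choice of fundamental pieces so that the nullity estimate closes---is, I expect, the technical heart of the whole section and the place where the high symmetry is indispensable.

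Finally, to produce a solution with a uniform estimate I would use an exhaustion argument. For $b'$ large one solves the $\Grphat$-equivariant Dirichlet problem $\Lhat u_{b'}=f$ on $\Sk(b')$ with $u_{b'}=0$ on $\partial\Sk(b')$: solvability holds once $b'$ is large, because a nonzero equivariant kernel element for arbitrarily large $b'$ would, after recentering and passing to a limit, yield either a nonzero bounded equivariant Jacobi field on $\Sk$ or a nonzero bounded equivariant $\Delta$-harmonic function on the full cylinder, both already excluded. The same contradiction argument applied to $\{u_{b'}\}$, using interior Schauder estimates in the interior and the exponential wing decay to prevent loss of mass at infinity, gives $\norm{u_{b'}:C^{2,\beta}(\Sk,\ghat)}\le C(\beta,b,k,n)\norm{f:C^{0,\beta}(\Sk,\ghat)}$ uniformly in $b'$; a further application of interior estimates lets $b'\to\infty$ along a subsequence to a limit $u=\Rtowkn f$ with $\Lhat u=f$, the asserted estimate, and (by injectivity of $\Lhat$) linear dependence on $f$.
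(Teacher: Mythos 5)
Your reduction of the proposition to triviality of the $\Grphat$-equivariant kernel is sound, your treatment of the wings via the vanishing zeroth Fourier mode is correct, and your exhaustion argument for producing the solution with a uniform bound would work (the paper instead argues directly via a Hilbert-space/spectral argument, but that is a matter of taste). The genuine gap is in the step you yourself single out as the technical heart: the Montiel--Ros nullity count.

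You propose to run Montiel--Ros by regarding $\Lhat=\Delta_{\ghat}+\abs{\Ahat}^2$ as the Schr\"odinger operator associated to the Gauss map $N\colon\Sk\to\Sph^2$, i.e.\ to work in the conformal metric $h=\tfrac12\abs{\Ahat}^2\ghat=N^*\gsph$. This is exactly what \cite{KapJDG} does for the classical case $k=2$, but for $k\geq3$ the tower $\Sk$ has umbilic points (where the Gauss map is branched and $\abs{\Ahat}^2=0$), so $h$ degenerates there and the fundamental pieces you want are no longer smooth round hemispheres but branched covers with conical singularities; the clean eigenvalue count does not close without further work. The paper's key new idea, which you do not reproduce, is to replace $N$ by a degree-one map $\Pi\colon\Sk\to\Sph^2\setminus\{\omega_1,\dots,\omega_{2k}\}$ built from the inverse of the Enneper--Weierstrass parametrization composed with inverse stereographic projection (see the construction of $\Pi$ and the conformal factor $e^{2\phi}$ in \ref{eta}): $\eta:=\Pi^*\gsph$ has bounded conformal factor with bounded inverse away from the ends, the potential $e^{-2\phi}\abs{\Ahat}^2$ is globally bounded (\ref{potential}), and crucially each slab $\Omega_j=\Sk\cap\{(2j-1)\pi/2<\zz<(2j+1)\pi/2\}$ becomes an exact open round hemisphere in $(\Sk,\eta)$, which is what makes the Dirichlet/Neumann eigenvalue comparison in Lemma \ref{hemisphere} explicit and lets the Montiel--Ros argument (Proposition \ref{nokernel}) close. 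A second, related mismatch: you propose cutting along the horizontal lines of $n\Ck$ (at heights $\zz\in n\pi\Z$) and using vanishing Dirichlet data there, but the paper instead decomposes along the horizontal planes of symmetry at half-integer multiples of $\pi$ (where the curved, not straight, lines of symmetry lie) and compares Dirichlet and Neumann counts under the rotation-only subgroup $\Hhat\subsetneq\Grphat$, using the reflections only at the end to rule out the one remaining candidate kernel element (the vertical component of the Gauss map). Without the hemisphere structure and this particular decomposition, the nullity estimate you invoke is not something the Montiel--Ros framework hands you directly.
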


\begin{proof}
The proof will be completed by (i) introducing a conformal metric $\eta=e^{2\phi}\ghat$,
(ii) establishing that the Schr\"{o}dinger operator
$\Lhat_\eta:=e^{-2\phi}\Lhat$ acting between Sobolev spaces
defined with respect to this metric has discrete spectrum omitting $0$,
(iii) extracting a $C^0$ bound for solutions, and finally (iv) applying Schauder estimates.
For $k>2$ the tower $\Sk$ has umbilic points where the Gauss map takes vertical values, 
so the pullback $h=\frac{1}{2}\abs{\Ahat}^2\ghat$ of the round
spherical metric $\gsph$ by the Gauss map,
as applied in \cite{KapJDG} and other constructions, degenerates there.

Instead we
will pull back the spherical metric by a different map.
Recall that above we studied the Enneper-Weierstrass representation \ref{WeiMap}
on the unit disc, which, after a similarity transformation,
parametrized a half-period of $\Sk$.
In fact we observe that \ref{WeiMap} extends to a diffeomorphism
  \begin{equation}
      \xi: (\C \cup \{\infty\}) \backslash \{\omega_1, \cdots, \omega_{2k}\}
      \to
      \frac{1}{k}\rote_{\zz\text{-axis}}^{\pi/2k}\Sk /
        \left\langle \transe_{\zz\text{-axis}}^{2\pi/k} \right\rangle
  \end{equation}
from the extended complex plane $\C \cup \{\infty\}$ punctured at the $2k$ roots of $-1$
to the corresponding tower modulo vertical translation by $2\pi/k$.
Moreover, the inverse extends to a covering map
  \begin{equation}
    \xi^{-1}: \frac{1}{k}\rote_{\zz\text{-axis}}^{-\pi/2k}\Sk
    \to
    (\C \cup \{\infty\}) \backslash \{\omega_1, \cdots, \omega_{2k}\}
  \end{equation}
of the punctured extended plane by the full tower.
By composing with the appropriate rotation, scaling, and
the inverse of stereographic projection $\varpi: \Sph^2 \to \C \cup \{\infty\}$,
we obtain a smooth covering
  \begin{equation}
    \Pi:=\varpi^{-1} \circ \xi^{-1} \circ \frac{1}{k} \circ \rote_{\zz\text{-axis}}^{-\pi/2k}:
      \Sk \to \Sph^2 \backslash \bigcup_{j=1}^{2k} \{\omega_j\}.
  \end{equation} 
(The Gauss map $\hat{\nu}: \Sk \to \Sph^2$ is then just
$\hat{\nu}(p)
=\rote_{\zz\text{-axis}}^{\pi/2k}\varpi^{-1}\left(\varpi \circ \Pi(p)\right)^{k-1}$.)

Referring further to the Enneper-Weierstrass data \ref{WeiData} we deduce
(see for example \cite{KarT} or any standard reference for the classical theory of minimal surfaces)
  \begin{equation}
    \xi^*\ghat(z)=\left(\frac{\abs{z}^{2k-2}+1}{\abs{z^{2k}+1}}\right)^2\abs{dz}^2,
  \end{equation}
so since
  \begin{equation}
    {\varpi^{-1}}^*\gsph(z)=\frac{4}{\left(\abs{z}^2+1\right)^2}\abs{dz}^2,
  \end{equation}
we find
  \begin{equation}
  \label{eta}
    \eta:=\Pi^*\gsph = 
      \frac{4\abs{(\varpi \circ \Pi)^{2k} + 1}^2}
      {k^2\left(\abs{\varpi \circ \Pi}^2 + 1\right)^2
      \left(\abs{\varpi \circ \Pi}^{2k-2} + 1\right)^2} \ghat=:e^{2\phi}\ghat,
  \end{equation}
so that the conformal factor $e^{2\phi}$
in front of $\ghat$ in \ref{eta}
and its reciprocal $e^{-2\phi}$ are bounded
on the inverse image under $\Pi$ of every compact subset of
$\Sph^2 \backslash \bigcup_{j=1}^{2k} \{\omega_j\}$.
Also from \ref{WeiData} we find
  \begin{equation}
    \left(\frac{1}{k} \circ \rote_{\zz\text{-axis}}^{-\pi/2k} \circ \xi\right)^*\abs{\Ahat}^2(z)
    =
    8\left(\frac{k-1}{k}\right)^2\frac{\abs{z}^{2k-4}\abs{z^{2k}+1}^2}{\left(\abs{z}^{2k-2}+1\right)^4},
  \end{equation}
whence
  \begin{equation}
  \label{potential}
    e^{-2\phi}\abs{\Ahat}^2
    =
    2(k-1)^2\abs{\varpi \circ \Pi}^{2k-4}
      \left(
        \frac{\abs{\varpi \circ \Pi}^2+1}{\abs{\varpi \circ \Pi}^{2k-2}+1}
      \right)^2,
  \end{equation}
so that the potential term of $\Lhat_\eta:=e^{-2\phi}\Lhat$ is a smooth function with
absolute value bounded on all $\Sk$.
Note that pullback $(\Pi^{-1} \circ \omega^{-1})^*e^{2\phi}$
of the conformal factor in \ref{eta}
is even under reflection through every line
through opposite $2k$\textsuperscript{th} roots of $-1$;
therefore (recall the discussion of the symmetries in the proof of \ref{tower})
$e^{2\phi}$ itself is in particular $\Grphat$-even,
and so $\Lhat_\eta$, like $\Lhat$,
takes $\Grphat$-odd functions to $\Grphat$-odd functions.

For each nonnegative integer $\ell$
write $H_{\Grphat}^\ell(\Sk,\eta)$
for the Sobolev space consisting of all $\Grphat$-odd
(in the distributional sense)
measurable functions whose weak
covariant derivatives up to order $\ell$,
with respect to $\eta$, have squared norms with finite integrals
on the quotient $(\Sk/\Grphat,\eta)$; define 
the $H_{\Grphat}^\ell(\Sk,\eta)$
norm of such a function to be the square root of the sum, 
from order $0$ to order $\ell$,
of these integrals.

Although $\Sk$ is not compact
(nor is the quotient $\Sk/\Grphat$),
$(\Sk,\eta)$ is nevertheless a union of closed round hemispheres punctured on their equators:
  \begin{equation}
    \Sk = \bigcup_{j \in \Z} \overline{\Omega_j},
  \end{equation}
where the overline indicates topological closure in $\Sk$ and
  \begin{equation}
  \label{Omegadef}
    \Omega_j:=\Sk \cap \left\{ \frac{2j-1}{2}\pi < \zz < \frac{2j+1}{2}\pi \right\}
  \end{equation}
is the open region on $\Sk$ between two consecutive horizontal planes of symmetry;
each $(\Omega_j,\eta)$ is isometric to an open round hemisphere of radius $1$.

Thus, from a sequence bounded in $H^1_{\Grphat}(\Sk,\eta)$, by
applying the Rellich-Kondrashov lemma successively to $2n$ contiguous such
hemispheres and bearing in mind the $\Grphat$ equivariance, we can
extract a subsequence converging in $L^2_{\Grphat}(\Sk,\eta)=H^0_{\Grphat}(\Sk,\eta)$.
Using also the boundedness of $e^{-2\phi}\abs{\Ahat}^2$
and a standard application of
the Riesz representation theorem for Hilbert spaces,
we conclude that $\Lhat_\eta$ has discrete spectrum.
In Proposition \ref{nokernel} below we show moreover that $\Lhat_\eta$ has trivial kernel.

Now, given $f \in C^{0,\beta}_{c, \; \Grphat}(\Sk(b),\eta)$,
we have $e^{-2\phi}f \in H^0_{\Grphat}(\Sk,\eta)$ with
$\norm{e^{-2\phi}f}_{L^2(\Sk,\eta)} \leq C(b)\norm{f}_{C^{0,\beta}(\Sk,\eta)}$,
so by Proposition \ref{nokernel} there exists $u \in H^1_{\Grphat}(\Sk,\eta)$ weakly solving
$\Lhat u = f$ and satisfying the estimate 
$\norm{u}_{H^1(\Sk,\eta)} \leq C(b)\norm{f}_{C^{0,\beta}(\Sk,\eta)}$.
By standard elliptic regularity theory and the bounded geometry
of $(\Sk,\ghat)$ in fact $u \in C^{2,\beta}_{loc}(\Sk)$ with
  \begin{equation}
  \label{Schauder}
    \norm{u}_{C^{2,\beta}(B_1(p),\ghat)} \leq C \left(\norm{u}_{C^0(B_2(p))}
     + \norm{f}_{C^{0,\beta}(\Sk,\ghat)}\right)
  \end{equation}
for each $p \in \Sk$, where $B_r(p)$ is the open ball with
center $p$ and $\ghat$ radius $r$.

Next, from the Bochner formula
together with the divergence theorem, the recognition
that the compactly supported smooth functions are dense in $H^1_{\Grphat}(\Sk,\eta)$,
and the equality
$\Delta u = e^{-2\phi}f - e^{-2\phi}\abs{\Ahat}^2u$,
one secures the further estimate
  \begin{equation}
  \label{H2est}
    \norm{u}_{H^2(\Sk,\eta)} \leq C\norm{f}_{C^{0,\beta}(\Sk,\eta)}.
  \end{equation}
Now, given $v \in C^\infty(\Sk)$ and $p \in \Sk$,
there is a sector $S \subset \Sk$ of a spherical cap
(relative to $\eta$),
with center $p$, vertex angle $\pi/4$, and radius $1/4$,
entirely contained in $\Sk$
(so missing the roots of unity at the equator).
Then, writing $\gamma_\theta(s)$
for the $\eta$ geodesic through $p$,
parametrized by arc length $s$ from $p$,
and with initial angle $\theta$
measured from one edge of $S$ to the other,
we have
$v(p)
=-\int_0^{1/4} \frac{d}{ds}(\cutoff{1/4}{1/8}(\gamma_\theta(s))v(\gamma_\theta(s))) \, ds
=\int_0^{1/4} s\frac{d^2}{ds^2}(\cutoff{1/4}{1/8}(\gamma_\theta(s))v(\gamma_\theta(s))) \, ds,
$
using the fundamental theorem of calculus and integrating by parts.
Integrating in $\theta$ from $0$ to $\pi/4$ and applying the Cauchy-Schwarz
inequality yields the
simple Morrey-Sobolev inequality
  \begin{equation}
    \norm{u}_{C^0(\Sk)} \leq C\norm{u}_{H^2(\Sk,\eta)}.
  \end{equation}
This estimate, in conjunction with \ref{H2est} and \ref{Schauder},
completes the proof.
\end{proof}

\begin{prop}
\label{nokernel}
The operator $\Lhat_\eta$ acting on $H_{\Grphat}^1(\Sk,\eta)$,  
as defined in the proof of Proposition \ref{towsol}, has trivial kernel.
\end{prop}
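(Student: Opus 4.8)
The plan is to run a Montiel--Ros type argument, subdividing $\Sk$ into spherical pieces on which the relevant Schr\"odinger operator is positive. Suppose $u\in H^1_{\Grphat}(\Sk,\eta)$ lies in the kernel of $\Lhat_\eta$. By the elliptic regularity and Rellich arguments already used in the proof of \ref{towsol}, together with the compactness of $\Sk/\Grphat$, the function $u$ is smooth and $u$ together with all its covariant derivatives is bounded on $\Sk$. Since a rotation by $\pi$ about a straight line lying on a minimal surface reverses that surface's sides, it is an odd symmetry, so $\Grphat$-oddness forces $u$ to vanish on the union $n\Ck$ of scaffold lines.

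To subdivide finely enough I would enlarge this vanishing locus using the extra reflection $\refle_{\zz=\pi/2}\in\Gsym(\Sk)$ (recall \ref{tower}.vi), which is an isometry of both $\ghat$ and $\eta$ preserving $\abs{\Ahat}^2$, hence commutes with $\Lhat_\eta$ and preserves $H^1_{\Grphat}(\Sk,\eta)$. Replacing $u$ by its symmetric or antisymmetric part under $\refle_{\zz=\pi/2}$---each again a $\Grphat$-odd element of the kernel---I may assume $u$ transforms by a fixed sign $\epsilon$ under $\refle_{\zz=\pi/2}$, hence by the same sign under the conjugate reflection $\refle_{\zz=-\pi/2}$, hence is invariant under $\refle_{\zz=\pi/2}\refle_{\zz=-\pi/2}=\transe_{\zz\text{-axis}}^{2\pi}$; thus $u$ is $2\pi$-periodic in $\zz$. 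Combining this with the identities $\transe_{\zz\text{-axis}}^{2j\pi}\rote_{\xx\text{-axis}}^\pi=\rote^\pi_{\ell_j}$, reflection through the straight line $\ell_j=\{\yy=0,\ \zz=j\pi\}$, and $\rote_{\zz\text{-axis}}^{2\pi/k}\rote_{\xx\text{-axis}}^\pi=\rote^\pi_{\ell'}$ (recall \ref{tower}.v), the $\Grphat$-oddness then forces $u$ to vanish on every straight line of $\Sk$, i.e.\ on all of $\Ck$, not merely on $n\Ck$. Now $\Ck$ meets each hemisphere $\Omega_j$ (recall \ref{Omegadef} and the proof of \ref{towsol}) in $k$ great-circle arcs through the pole---the image of the saddle point---cutting $\Omega_j$ into $2k$ congruent lune-sectors of opening angle $\pi/k$. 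If $\epsilon=-1$ then $u$ also vanishes on the equatorial curves $\{\zz=\tfrac\pi2+j\pi\}\cap\Sk$, so each lune-sector is a Dirichlet domain for $\Lhat_\eta u=0$; if $\epsilon=+1$ I instead double each lune-sector across the equatorial curve, using that $u$ extends smoothly across the symmetry plane $\{\zz=\pi/2\}$, obtaining a piece which in the metric $\eta$ is a round spherical lune of opening angle $\pi/k$ on whose meridian boundary $u$ vanishes.

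It then remains to show that $\Lhat_\eta$ has no kernel in $H^1_0$ of such a spherical lune or lune-sector $\omega$. Here I would pass to the Gauss map metric $h=\tfrac12\abs{\Ahat}^2\ghat$; equivalently, replace $u$ on $\omega$ by $v:=u\circ\bigl(\varpi^{-1}\circ P_{k-1}\circ\varpi\circ\Pi\bigr)^{-1}$, where $P_{k-1}(\zeta)=\zeta^{k-1}$. Because $\omega$ contains at most one branch point of this map, at its vertex, and $\zeta\mapsto\zeta^{k-1}$ is injective on any azimuthal sector of angular width less than $2\pi/(k-1)$, the substitution is well defined; since $\Lhat=(h/\ghat)(\Delta_h+2)$ and $(\omega,h)$ is isometric via this map to a genuine round spherical lune (respectively lune-sector) of opening angle $(k-1)\pi/k$, the equation $\Lhat_\eta u=0\Leftrightarrow\Lhat u=0$ becomes $(\Delta_{\gsph}+2)v=0$ on that lune, with $v\in H^1_0$. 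Finally, a spherical lune of opening angle $\beta$ has first Dirichlet eigenvalue $\tfrac{\pi}{\beta}\bigl(\tfrac{\pi}{\beta}+1\bigr)$, strictly greater than $2$ when $\beta<\pi$ (and a lune-sector has a still larger first Dirichlet eigenvalue, as one sees by reflecting across its equatorial edge and observing that the ground state of the lune is even there); since $0<(k-1)\pi/k<\pi$ for every $k\ge2$, the operator $-\Delta_{\gsph}-2$ is positive definite with Dirichlet conditions on $\omega$, so $v\equiv0$, hence $u\equiv0$ on each piece and therefore on all of $\Sk$.

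I expect the main obstacle to be the geometric bookkeeping of the second and third steps: verifying in each parity case, and for each hemisphere $\Omega_j$, that $\Ck$ (and, when $\epsilon=-1$, the equatorial curves) really do cut $\Sk$ into lune-sectors of opening angle at most $\pi/k$; keeping careful track of which edges carry the Dirichlet condition under the doubling; and confirming that $v$ lands in $H^1_0$ of the spherical piece across the cone point produced by $P_{k-1}$. Once the pieces are correctly identified the argument closes, because the potential $e^{-2\phi}\abs{\Ahat}^2$, which in the $\eta$-picture exceeds $2$ near the Scherk ends, is normalized exactly to the constant $2$ by passing to the Gauss metric via the substitution $v=u\circ(\,\cdot\,)^{-1}$.
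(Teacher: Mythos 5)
The claim near the start of your argument that $\refle_{\zz=\pi/2}$ ``preserves $H^1_{\Grphat}(\Sk,\eta)$'' is false when $n>1$, and this is where the proof breaks down. Recall $\Grphat=\Grphat_{k,n}=\Grefl(n\Ck)=\langle\rote_{\xx\text{-axis}}^\pi,\rote_{\zz\text{-axis}}^{2\pi/k},\transe_{\zz\text{-axis}}^{2n\pi}\rangle$. Conjugating $\rote_{\xx\text{-axis}}^\pi$ by $\refle_{\zz=\pi/2}$ gives $\rote^\pi_{\{\yy=0,\ \zz=\pi\}}=\transe_{\zz\text{-axis}}^{2\pi}\rote_{\xx\text{-axis}}^\pi$, which belongs to $\Grphat_{k,n}$ if and only if $\transe_{\zz\text{-axis}}^{2\pi}$ does, that is, if and only if $n=1$. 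So for $n\ge2$, $\refle_{\zz=\pi/2}$ does not normalize $\Grphat_{k,n}$, the pullback $u\circ\refle_{\zz=\pi/2}$ of a $\Grphat$-odd $u$ is not $\Grphat$-odd (it need not vanish on $n\Ck$: it vanishes instead on $\refle_{\zz=\pi/2}(n\Ck)\ne n\Ck$), and the symmetric and antisymmetric parts $u_\pm$ are not elements of $H^1_{\Grphat}(\Sk,\eta)$. Consequently the conclusion that the kernel element vanishes on all of $\Ck$ does not follow from $\Grphat_{k,n}$-oddness for $n\ge2$, and the lune decomposition that the rest of your argument rests on is unavailable. (You cannot substitute $n\Ck$ for $\Ck$ here either: $n\Ck$ meets the hemisphere $\Omega_j$ only when $j\in n\Z$, so for $n\ge2$ it does not cut each $\Omega_j$ into lune-sectors.) The same non-normality also blocks a na\"{i}ve character decomposition under $\transe_{\zz\text{-axis}}^{2\pi}$.

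This is not an incidental technical slip; it is precisely the difficulty the paper is engineered to avoid. The paper does not try to produce a large vanishing locus inside the surface. Instead it enlarges the function space (to $\Hhat$-equivariant functions, where $\Hhat$ drops the line-reflections from $\Grphat$) and runs a Montiel--Ros eigenvalue count: Lemma~\ref{hemisphere} supplies exact Dirichlet and Neumann nullity and index data on each round hemisphere $\Omega_j$, and the counting argument over $2n$ contiguous hemispheres bounds the $\Hhat$-nullity by one, after which the vertical component of the Gauss map is exhibited as the one obstruction and excluded by $\Grphat$-oddness. That counting argument is insensitive to $n$ in exactly the way that your vanishing-locus argument is not. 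Your Gauss-map/lune idea is genuinely elegant and would give a considerably shorter proof for $n=1$ (and you correctly anticipated the umbilic/branch-point subtleties that made the authors prefer the $\eta$-metric over the $h$-metric inside Lemma~\ref{hemisphere}), but as written it does not prove the stated proposition, which is for all $n\ge1$ and is used with $n>1$ in the main construction.
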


\begin{proof}
To show that $\Lhat_\eta$ has trivial $H_{\Grphat}^1(\Sk,\eta)$ kernel
we will first count its nullity on the somewhat larger domain $H_{\Hhat}^1(\Sk,\eta)$,
where
  \begin{equation}
    \Hhat
    :=
    \Hhat_{k,n}
    :=
    \left\langle
      \rote_{\zz \text{-axis}}^{2\pi/k}, \, \transe_{\zz \text{-axis}}^{2n\pi}
    \right\rangle
    \subsetneq
    \Grphat
    \subsetneq
    \Gsym(\Sk)
  \end{equation}
is the subgroup of $\Grphat$ having the same generators save the
reflections through lines, which are excluded.
This count is performed by adapting the variational
proofs given by Montiel and Ros for Lemma 12 and Lemma 13 in \cite{MR}.
There they calculate the multiplicity of the eigenvalues of the Laplacian
on the round sphere as eigenvalues of the Laplacian on certain coverings
of the sphere. Here we are interested in the multiplicity of $0$ only, for
the more complicated operator $\Lhat_\eta$.

Recalling \ref{Omegadef} we see that
$\Hhat \left(\bigcup_{j=1}^{2n} \overline{\Omega_j}\right) = \Sk$.
Write $H^1_{\Hhat}(\Omega_j,\eta)$ for the space of restrictions to
$\Omega_j$ of elements of $H^1_{\Hhat}(\Sk,\eta)$ and
write $H^1_{0,\Hhat}(\Omega_j,\eta)$ for the closure in $H^1_{\Hhat}(\Sk,\eta)$
of the space of smooth functions compactly supported in $\Omega_j$ and invariant under
the (rotational) symmetries of $\Hhat$ that preserve $\Omega_j$.
We define the bilinear form $B_{\Lhat_\eta,\Omega_j}$
on $H^1_{\Hhat}(\Omega_j) \times H^1_{\Hhat}(\Omega_j)$
  \begin{equation}
    B_{\Lhat_\eta,\Omega_j}[u,v]
    :=
    -(du,dv)_{L^2(\eta)}+\left(e^{-2\phi}\abs{\Ahat}^2u,v\right)_{L^2(\eta)}.
  \end{equation}
Then mimicking \cite{MR}, we define the subspace $V$ of $H^1_{\Hhat}(\Sk,\eta)$ by
  \begin{equation}
    \begin{aligned}
      V:= &\left\langle \left. u \in H^1_{0,\Hhat}(\Omega_1,\eta) \; \right| \;
          \exists \lambda>0 \;\; \forall v \in H^1_{0,\Hhat}(\Omega_1,\eta) \;\;
          B_{\Lhat_\eta,\Omega_1}[u,v]
          = \lambda (u,v)_{L^2(\eta)}
          \right\rangle \\
          &\,\,\, \oplus \,
          \bigoplus_{j=2}^{2n} \left\langle \left. u \in H^1_{0, \Hhat}(\Omega_j,\eta) \; \right| \;
          \exists \lambda \geq 0 \;\; \forall v \in H^1_{0,\Hhat}(\Omega_j,\eta) \;\;
          B_{\Lhat_\eta,\Omega_j}[u,v]
          = \lambda (u,v)_{L^2(\eta)}
          \right\rangle,
    \end{aligned}
  \end{equation}
and the vector space $W$ by
  \begin{equation}
    \begin{aligned}
    W:= &\left\langle \left. u \in H^1_{\Hhat}(\Omega_1,\eta) \; \right| \;
          \exists \lambda \geq 0 \;\; \forall v \in H^1_{\Hhat}(\Omega_1,\eta) \;\;
          B_{\Lhat_\eta,\Omega_1}[u,v]
          = \lambda (u,v)_{L^2(\eta)}
        \right\rangle \\
        &\,\,\, \oplus \,
        \bigoplus_{j=2}^{2n} \left\langle \left. u \in H^1_{\Hhat}(\Omega_j,\eta) \; \right| \;
          \exists \lambda>0 \;\; \forall v \in H^1_{\Hhat}(\Omega_j,\eta) \;\;
          B_{\Lhat_\eta,\Omega_j}[u,v]
          = \lambda (u,v)_{L^2(\eta)}
        \right\rangle,
    \end{aligned}
  \end{equation}
where in the first equation we take the direct sum within $H^1_{\Hhat}(\Sk,\eta)$,
while in the second we use the abstract direct sum,
and in both equations angled brackets indicate the linear span in $H^1_{\Hhat}(\Sk,\eta)$.

Using the variational characterization of
eigenvalues and the unique-continuation principle 
we find as in \cite{MR} that
(i) orthogonal projection in $H^1_{\Hhat}(\Sk,\eta)$ onto the subspace
spanned by the $-\Lhat_\eta$ eigenfunctions with strictly negative eigenvalues
has injective restriction to $V$ and
(ii) orthogonal projection
in $\bigoplus_{j=1}^{2N} H^1_{\Hhat}(\Omega_j,\eta)$ onto $W$
has injective restriction (after precomposition with the obvious inclusion)
to the subspace of $H^1_{\Hhat}(\Sk,\eta)$
spanned by $-\Lhat_\eta$ eigenfunctions with nonpositive eigenvalues.

According to Lemma \ref{hemisphere} below
$-\Lhat_\eta$ on $H^1_{0, \Hhat}(\Omega_j,\eta)$
(with Dirichlet condition)
has one-dimensional kernel and no strictly negative eigenvalues,
so from (i) in the preceding paragraph we deduce that
$-\Lhat_\eta$ on $H^1_{\Hhat}(\Sk,\eta)$
has at least $2n-1$ strictly negative eigenvalues,
counted with multiplicity.
On the other hand Lemma \ref{hemisphere}
also states that $-\Lhat_\eta$ on $H^1_{\Hhat}(\Omega_j,\eta)$
with Neumann condition
has trivial kernel and precisely one simple negative eigenvalue,
so from (ii) in the preceding paragraph we deduce that
$-\Lhat_\eta$ on $H^1_{\Hhat}(\Sk,\eta)$
has no more than $2n$ nonnegative eigenvalues, counted with multiplicity.
Thus $\Lhat_\eta$ has nullity at most one on $H^1_{\Hhat}(\Sk,\eta)$.
Since the vertical component of the Gauss map
is a Jacobi field for $\Sk$ and
is $\Hhat$ but not $\Grphat$ equivariant,
we see that the $H^1_{\Grphat}(\Sk,\eta)$ kernel of $\Lhat_\eta$ is indeed trivial.
\end{proof}

\begin{lemma}
\label{hemisphere}
For each $j \in \Z$, with notation as in \ref{Omegadef} and the proof of Proposition \ref{nokernel},
  \begin{enumerate}[(i)]
    \item $-\Lhat_\eta$ on $H^1_{0,\Hhat}(\Omega_j,\eta)$
      (with Dirichlet boundary condition)
      has $1$-dimensional kernel and no strictly negative eignevalues, and

    \item $-\Lhat_\eta$ on $H^1_{\Hhat}(\Omega_j,\eta)$
      with Neumann boundary condition
      has trivial kernel and exactly one simple negative eigenvalue.
  \end{enumerate}
\end{lemma}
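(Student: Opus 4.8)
The plan is to work with $-\Lhat_\eta$ and the identifications from the proof of Proposition~\ref{towsol}. Recall that $\Lhat_\eta=e^{-2\phi}\Lhat=\Delta_\eta+V$ with $V:=e^{-2\phi}\abs{\Ahat}^2\ge 0$ as in \ref{potential}, that $(\Omega_j,\eta)$ is an open round hemisphere of radius $1$, and that in the stereographic coordinate $w=\varpi\circ\Pi$, in which $\eta=4(\abs{w}^2+1)^{-2}\abs{dw}^2$, one has $V=2(k-1)^2\abs{w}^{2k-4}\bigl(\frac{\abs{w}^2+1}{\abs{w}^{2k-2}+1}\bigr)^2$; in particular $V$ is radial and invariant under the equatorial inversion $w\mapsto 1/\overline w$. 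Under $\Pi$ the subgroup $\langle\rote_{\zz\text{-axis}}^{2\pi/k}\rangle\cong\Z/k$ of $\Hhat$ that preserves $\Omega_j$ acts as rotation of $\Sph^2$ about the polar axis by $2\pi/k$ (visible from \ref{WeiData}, since $\zeta\mapsto e^{2\pi i/k}\zeta$ rotates the Gauss map $\zeta^{k-1}$), so $\Hhat$-equivariance on $\Omega_j$ just means invariance under this rotation. The key input is that the three coordinate functions $N_1,N_2,N_3$ of the Gauss map of $\Sk$ are Jacobi fields, by the classical identity $\Delta_{\ghat}N_i+\abs{\Ahat}^2N_i=0$; since the Gauss map factors through $\Pi$ and $\Pi$ is a Riemannian covering for $\eta$ and the round metric, the $N_i$ descend to smooth functions on $\Sph^2$ lying in $\ker\bigl(-\Delta_{\Sph^2}-V\bigr)$, with $N_3=:\phi_0=\frac{\abs{w}^{2k-2}-1}{\abs{w}^{2k-2}+1}$ radial and $N_1+iN_2$ of pure azimuthal Fourier mode $k-1$, with radial amplitude $\psi:=2\abs{w}^{k-1}(\abs{w}^{2k-2}+1)^{-1}>0$ away from the poles.

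For (i): $\phi_0$ is strictly monotone in $\abs{w}$ and vanishes exactly on $\{\abs{w}=1\}$, hence (after possibly replacing it by $-\phi_0$) is positive on the open hemisphere $\Omega_j$ and zero on $\partial\Omega_j$, and, being radial, lies in $H^1_{0,\Hhat}(\Omega_j,\eta)$. Thus $\phi_0$ is a positive Dirichlet solution of $\Lhat_\eta u=0$, so by Barta's theorem (equivalently, uniqueness of the positive ground state) $0$ is the lowest Dirichlet eigenvalue of $-\Lhat_\eta$ on $\Omega_j$ and it is simple, with eigenfunction $\phi_0$. Passing to the $\Hhat$-invariant subspace cannot decrease the lowest eigenvalue, while $0$ is still attained by $\phi_0$, so the lowest $\Hhat$-invariant Dirichlet eigenvalue is $0$ and still simple. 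This proves (i).

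For (ii): the constant function lies in $H^1_{\Hhat}(\Omega_j,\eta)$, and the Neumann Rayleigh quotient of $-\Lhat_\eta$ there equals $-B_{\Lhat_\eta,\Omega_j}[1,1]/\mathrm{vol}(\Omega_j)=-\int_{\Omega_j}V\big/\mathrm{vol}(\Omega_j)<0$, so $-\Lhat_\eta$ with Neumann data has a negative eigenvalue in the $\Hhat$-invariant subspace, and its lowest eigenvalue, being a ground state, is simple. It remains to show the next Neumann eigenvalue is strictly positive, and for this I would double $\Omega_j$ across its equator to the round sphere $\Sph^2$: since $V$ is equator-symmetric it extends smoothly, the Neumann (resp. Dirichlet) eigenfunctions of $-\Lhat_\eta$ on $\Omega_j$ correspond exactly to the eigenfunctions of $-\Delta_{\Sph^2}-V$ on $\Sph^2$ that are even (resp. odd) under the equatorial reflection, and $\Hhat$-invariance survives as invariance under the $2\pi/k$ polar rotation. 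Thus it suffices to prove the master claim: \emph{the operator $-\Delta_{\Sph^2}-V$ restricted to $\Z/k$-invariant functions on $\Sph^2$ has exactly one negative eigenvalue and one-dimensional kernel, spanned by $\phi_0$.} Granting it, since $\phi_0$ is \emph{odd} under the equatorial reflection while the negative eigenfunction, being a positive ground state, is \emph{even}, the even (Neumann) part has exactly one, simple, negative eigenvalue and trivial kernel, which is (ii) (and the odd part re-derives (i)).

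To prove the master claim I would decompose into azimuthal Fourier modes $u=\sum_m f_m(\rho)e^{im\theta}$; radiality of $V$ makes $-\Delta_{\Sph^2}-V$ preserve each mode, and $\Z/k$-invariance retains only $m\in k\Z$. In mode $m$ this yields the singular Sturm--Liouville operator $\mathcal{L}_m:=-\frac{1}{\sin\rho}\partial_\rho(\sin\rho\,\partial_\rho)+\frac{m^2}{\sin^2\rho}-V$ on $(0,\pi)$ with the natural finite-energy conditions at the poles. In mode $0$, $\phi_0$ is an eigenfunction of eigenvalue $0$ with exactly one interior zero (it is monotone in $\rho$), so by Sturm oscillation it is the \emph{second} $m=0$ eigenfunction; hence $\mathcal{L}_0$ has exactly one negative eigenvalue, while $0$ is a simple $m=0$ eigenvalue. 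For the modes $m\in k\Z\setminus\{0\}$ I would use that $\psi>0$ solves $\mathcal{L}_{k-1}\psi=0$, so Barta (in one dimension) gives that the lowest eigenvalue of $\mathcal{L}_{k-1}$ is $0$; then for $\abs{m}\ge k>k-1$ we have $\mathcal{L}_m=\mathcal{L}_{k-1}+\frac{m^2-(k-1)^2}{\sin^2\rho}$, a strictly positive perturbation of a nonnegative operator (the added term being pointwise positive and, being a piece of the $H^1$-energy in mode $m$, contributing a finite positive amount), so $\mathcal{L}_m$ has no nonpositive eigenvalue. Assembling the modes proves the master claim. The main obstacle is precisely this last positivity: a naive pointwise comparison $\frac{m^2}{\sin^2\rho}\ge V$ holds for $k\le 3$ but fails for $k\ge 4$, and it is to bypass it that one invokes the explicit positive solution $\psi$ built from the horizontal components of the Gauss map. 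The remaining points --- removability of the $2k$ equatorial punctures (zero $H^1$-capacity), regularity at the poles, and the validity of Sturm oscillation theory for $\mathcal{L}_0$ with its Legendre-type endpoints --- are routine, and the overall architecture parallels the variational arguments of Montiel and Ros in \cite{MR}, the Gauss-map Jacobi fields playing the role of their explicit spherical harmonics.
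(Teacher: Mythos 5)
Your argument is correct in all essentials, but it takes a genuinely different route from the paper's. The paper transforms the hemisphere conformally to a flat half-cylinder, factors the resulting operator as $\Lhat_{cyl}=A_-A_++(k-1)^2+\partial_\theta^2$ with $A_\pm=r\partial_r\pm(k-1)\tfrac{r^{2k-2}-1}{r^{2k-2}+1}$, and uses the Darboux intertwining $\Lhat_{cyl}A_-=A_-\Delta_{cyl}$ to generate the complete explicit basis of radial solutions $u_\lambda=A_-r^\lambda$; the kernel is read off by imposing boundary conditions directly, and the negative-eigenvalue count is obtained by deleting an $\epsilon$-cap at the pole and matching a transcendental $\tanh$/$\coth$ condition at $r=\epsilon$. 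You instead identify the Gauss-map components as the relevant Jacobi fields (these are exactly $u_0$ and $u_{k-1}$ up to scale in the paper's notation), apply Barta on the hemisphere for (i), and for (ii) double across the equator to $\Sph^2$, Fourier-decompose, count the mode-$0$ eigenvalues by Sturm oscillation, and eliminate the higher modes $\abs{m}\ge k$ by ground-state comparison with the operator at mode $k-1$. The paper's method is a self-contained ODE computation that explicitly produces every separated solution but requires the delicate $\epsilon$-cap matching; yours is softer and more variational, replacing the matching by comparison/oscillation theory, and it hews closer to the Montiel--Ros blueprint by making the Gauss-map Jacobi fields the central explicit input. Your comparison argument for $\abs{m}\ge k$ --- using the mode-$(k-1)$ Jacobi amplitude $\psi$ as a one-dimensional Barta barrier --- is a nice way around the failure of the naive pointwise bound $m^2/\sin^2\rho\ge V$ for $k\ge 4$, which you correctly flag. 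The points you label routine (removability of the $2k$ punctures, behavior at the Legendre-type endpoints, validity of singular Sturm oscillation) are indeed standard, but they would need to be spelled out in a publishable version; the paper's explicit-solution route sidesteps some of that bookkeeping at the cost of a longer calculation.
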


\begin{proof}
The hemisphere $(\Omega_j,\eta)$ with its pole deleted is conformal,
via stereographic projection (from the antipodal pole)
and a logarithm, to the standard half-cylinder with flat metric.
Concretely, using polar coordinates on the unit disc pulled back
by the same stereographic projection to $\Omega_j$, we have
  \begin{equation}
    \eta = \frac{4r^2}{(1+r^2)^2}\left(r^{-2}dr^2 + d\theta^2\right).
  \end{equation}
We find (recall \ref{potential}) that for the corresponding operator
  \begin{equation}
    \Lhat_{cyl} := \frac{4r^2}{(1+r^2)^2}\Lhat_\eta
    = (r\partial_r)^2 + 8(k-1)^2 \frac{r^{2k-2}}{(r^{2k-2}+1)^2} + \partial_\theta^2
  \end{equation}
we have
  \begin{equation}
    \begin{aligned}
      &\Lhat_{cyl}
      =
      A_-A_+ + (k-1)^2 + \partial_\theta^2 \mbox{ and} \\
      &\Delta_{cyl}:=(r\partial_r)^2 + \partial_\theta^2
      =
      A_+A_- + (k-1)^2 + \partial_\theta^2, \mbox{ where} \\
      &A_\pm
      =
      r\partial_r \pm (k-1)\frac{r^{2k-2}-1}{r^{2k-2}+1}.    
    \end{aligned}
  \end{equation}

Evidently $\Lhat_{cyl}A_-=A_-\Delta_{cyl}$,
so whenever $w$ is an eigenfunction of the flat Laplacian
$\Delta_{cyl}$,
$A_-w$ is an eigenfunction (when nonzero) of $\Lhat_{cyl}$ with the same eigenvalue.
Moreover $A_-r^\lambda$ and $A_-r^{-\lambda}$ are linearly independent
for every real $\lambda \geq 0$ except $\lambda=k-1$ and obviously $\lambda=0$.
Corresponding to the latter exception we have for $A_-A_+ + (k-1)^2$ the linearly independent eigenfunctions
$A_-1$ and $A_-\ln r$ with eigenvalue $0$.
As for the former exception, 
noting that we have already accounted for the eigenfunction
$A_-r^{k-1}=-A_-r^{1-k}$ of $A_-A_+ + (k-1)^2$ with eigenvalue $(k-1)^2$
and that $\Delta_{cyl}A_+=A_+\Lhat_{cyl}$,
we find that nonzero $w$ solving $A_+w=r^{k-1}+r^{1-k}$ is another, independent such eigenfunction.

Thus we deduce that an eigenfunction for $-\Lhat_{cyl}$
of the form $v_\lambda(r)e^{i\ell\theta}$
with eigenvalue $\ell^2-\lambda^2$
has radial factor $v_\lambda(r)$ a linear combination of $u_{\abs{\lambda}}(r)$
and $u_{-\abs{\lambda}}(r)$ given by 
  \begin{equation}
      u_\lambda(r) 
      :=
      \left(r\partial_r - (k-1)\frac{r^{2k-2}-1}{r^{2k-2}+1}\right)
        r^\lambda
      = \left(\lambda - (k-1)\frac{r^{2k-2}-1}{r^{2k-2}+1}\right)r^\lambda
  \end{equation}
or a linear combination of $u_0(r)$ and
  \begin{equation}
    u_{0'}(r) := 1 - (k-1)\frac{r^{2k-2}-1}{r^{2k-2}+1} \ln r
  \end{equation}
in case $\lambda = 0$
or a linear combination of $u_{k-1}(r)=-u_{1-k}(r)$ and
  \begin{equation}
    u_{(k-1)'}(r) := r^{k-1}\frac{r^{2k-2}-r^{2-2k}+4(k-1)\ln r}{r^{2k-2}+1}
  \end{equation}
in case $\abs{\lambda} = k-1$.

Separating variables, we need only consider eigenfunctions of the above form,
and, because of the rotational symmetries imposed,
$\ell$ must take values in $k\Z$.
Now suppose $\Lhat_\eta v_\lambda(r)e^{i\ell\theta} = 0$ on $\Omega_j$.
Then $\Lhat_{cyl} v_\lambda(r)e^{i\ell \theta}=0$ as well,
on $\Omega_j$ punctured at its pole $\{0\}$,
so $\lambda = \abs{\ell}$ and $v_\lambda(r)=c_1u_\lambda(r)+c_2u_{-\lambda}(r)$
for some constants $c_1$ and $c_2$,
unless $\ell=0$, in which case $v_\lambda(r)=c_1u_0(r)+c_2u_{0'}(r)$.
(The other exceptional case of $\lambda=k-1$ is excluded by the symmetries.)

If we impose Dirichlet conditions,
we find $c_1=c_2$ when $\lambda>0$,
but then $v_\lambda$ is singular at $r=0$ unless $c_1=c_2=0$,
which must therefore hold, since $v_\lambda(r)e^{i\ell\theta}$
is an eigenfunction on all $\Omega_j$.
On the other hand $u_0(1)=0$ while $u_{0'}(1)=1$,
so $c_2=0$ when $\lambda=0$.
Thus Dirichlet $\Lhat_\eta$ has kernel spanned by
$u_0(r)=\frac{r^{2k-2}-1}{r^{2k-2}+1}$,
confirming the nullity asserted in (i).
If we instead impose Neumann conditions,
since $\partial_ru_{\lambda}(1)=\lambda^2-(k-1)^2$
(again $\lambda=k-1$ is excluded by the symmetries),
we find $c_1=-c_2$ when $\lambda>0$,
so again we need $c_1=c_2=0$ to avoid a singularity at $r=0$.
On the other hand $\partial_ru_{0'}(1)=0$
while $\partial_ru_{0}(1)=-(k-1)^2$,
so in this case $c_1=0$, but $u_{0'}$ is also singular at $r=0$.
Thus Neumann $\Lhat_\eta$ indeed has trivial kernel,
as claimed in (ii).

For $\lambda \neq 0$ one cannot expect agreement of 
solutions to $\Lhat_\eta u=\lambda u$
and to $\Lhat_{cyl} u=\lambda u$,
but the variational characterization of eigenvalues
reveals that the number (counting multiplicity)
of strictly negative (Dirichlet or Neumann) eigenvalues
will agree for the two operators,
at least on compact subsets of $\Omega_j \backslash \{0\}$,
where the conformal factor is bounded with bounded inverse.
Furthermore, again using the variational characterization of eigenvalues
one sees that for $\epsilon>0$ sufficiently small,
the number (counted with multiplicity)
of strictly negative eigenvalues for $\Lhat_\eta$
on $\Omega_j$ with Dirichlet condition on the equator
is the same as the number of strictly negative eigenvalues of $\Lhat_\eta$
on $\Omega_j$ less a spherical cap $B_\epsilon(0)$ with radius $\epsilon$ and
center the pole $0$, imposing Dirichlet conditions on both the equator
and boundary of the cap.
(To show the former number is at least the latter
extend test functions vanishing on $\partial B_\epsilon(0)$
to test functions vanishing on $B_\epsilon(0)$;
for the reverse inequality use a logarithmic cut-off,
identically $0$ on $B_\epsilon(0)$
and identically $1$ on $B_{\sqrt{\epsilon}}(0)$.)
Likewise, assuming $\epsilon$ small enough,
the number of strictly negative eigenvalues of $\Lhat_\eta$
on $\Omega_j$ with Neumann condition on the equator
is the same as the number of strictly negative eigenvalues
of $\Lhat_\eta$ on $\Omega_j \backslash B_\epsilon(0)$
with Neumann condition on the equator and Dirichlet condition on $\partial B_\epsilon(0)$.

Now suppose $v_\lambda(r)e^{i\ell \theta}$ is an eigenfunction of $\Lhat_\eta$
on $\Omega_j \backslash B_\epsilon(0)$
with strictly negative eigenvalue,
imposing either of the above boundary conditions.
Then $\abs{\lambda}>\abs{\ell}$,
so in particular $\lambda \neq 0$.
If $\abs{\lambda}=k-1$,
then $v_\lambda=c_1u_{k-1}+c_2u_{(k-1)'}$.
If we impose the Dirichlet condition on the equatorial circle $r=1$,
then, since $u_{k-1}(1)=k-1$ 
and $u_{(k-1)'}(1)=0$,
we must have $c_1=0$,
but $\lim_{r \to 0} u_{(k-1)'}(r)=-\infty$,
so the Dirichlet condition on $r=\epsilon$ means $c_2=0$ as well.
Thus there are no such Dirichlet eigenfunctions.
If instead we impose the Neumann condition on $r=1$,
then, since $\partial_r u_{k-1}(1)=0$ 
and $\partial_r u_{(k-1)'}(1)=4(k-1)$,
we need $c_2=0$,
but $u_{k-1}(r)$ vanishes only at $r=0$,
so the Dirichlet condition on $r=\epsilon$ means $c_1=0$ too.
Thus there are no such Neumann eigenfunctions either.

Now assume $\abs{\lambda} \neq k-1$.
Then $v_\lambda=c_1u_{\abs{\lambda}}+c_2u_{-\abs{\lambda}}$.
As when studying the kernel above, imposition of the Dirichlet condition $r=1$
forces $c_1=c_2$,
but a quick calculation shows that imposition of the Dirichlet condition on $r=\epsilon$
then requires $\lambda \tanh [\lambda \ln \epsilon] = (k-1) \tanh [(k-1) \ln \epsilon]$.
Since for every real $c \neq 0$ the function $x \tanh cx$ is even
and on $[0,\infty)$ strictly monotonic,
this last condition implies $\lambda=k-1$,
contradicting the initial assumption of the paragraph
and completing the proof of (i).
Imposing instead the Neumann condition on $r=1$ forces $c_1=-c_2$,
so the Dirichlet condition on $r=\epsilon$ now demands
$\lambda \coth [\lambda \ln \epsilon] = (k-1) \tanh [(k-1) \ln \epsilon]$.
For any $\epsilon>0$ the function on the left is even in $\lambda$,
strictly monotonic in $\lambda$ on $[0,\infty)$,
and has limit $(\ln \epsilon)^{-1}$ as $\lambda$ goes to $0$; 
moreover, as $\epsilon$ tends to $0$, the right-hand side goes to $1-k$.
Thus this equation has exactly one solution, completing the proof of (ii).
\end{proof}

\begin{remark}
\label{r:nk} 
A simpler proof of 
\ref{hemisphere} is also possible using the $h$ metric instead of the $\eta$ metric and without reference to the $A_\pm$ 
operators. 
\ref{towsol} can also be proved without using the $\eta$ metric. 
\end{remark} 

\subsection*{Approximate solutions on tori}

Now we state some estimates
for solutions to the Poisson equation $\Delta_{\geuc}u=f$
on the Euclidean strip
  \begin{equation}
  \label{TXdef}
    T_X:=(0,X\pi) \times \R
  \end{equation}
of given width $X>0$,
with $u$ subject to Dirichlet data
and $f$ odd under reflection through
the horizontal line $y=jY\pi$ for given $Y>0$ and every $j \in \Z$.
We set
  \begin{equation}
  \label{Kgrphatdef}
    \Kgrphat_Y:=\left\langle \refle_{y=0}^\pi, \, \refle_{y=Y\pi}^\pi \right\rangle.
  \end{equation}

In the applications to follow, $X$ will tend to infinity with $m$, while $Y$
will be bounded independently of $m$, so it is important that the estimates
here do not depend on $X$.
The additional decay estimate included in the proposition will be necessary
to guarantee convergence of the iterative scheme used to construct global solutions
on the initial surfaces.

\begin{prop}
\label{torsol}
With notation as in the preceding paragraph,
given $X>Y>0$ and $\beta \in (0,1)$, there exists a linear map
  \begin{equation}
    \Rtor_{X,Y}: 
    C^{0,\beta}_{\Kgrphat_Y}(T_X ,\geuc)
    \to 
    C^{2,\beta}_{\Kgrphat_Y}(T_X,\geuc)
  \end{equation}
and there exists a constant $C>0$---depending on $\beta$ and $Y$
but not on $X$---such that if $f \in C^{0,\beta}(T_X,\geuc)$ and $u=\Rtor_{X,Y}f$,
then $\Delta_{\geuc}u = f$, $u$ vanishes on $\partial T_X$, and
  \begin{equation}
  \label{torschauder}
    \norm{u : C^{2,\beta}(T_X,g_E)} 
    \leq
    C\norm{f: C^{0,\beta}(T_X,g_E)}.
  \end{equation}
Moreover, if $f$ vanishes outside $[A,B] \times \R$ for $1 < A < B < X\pi-1$, then
  \begin{equation}
    \begin{aligned}
      &\norm{u:
        C^{2,\beta}\left((0,A-1) \times \R, \, \geuc, \, e^{(x-A)/Y}\right)}
      \leq C \norm{f: C^{0,\beta}(T_X,\geuc)} \mbox{ and } \\
      &\norm{u:
        C^{2,\beta}\left((B+1,X\pi) \times \R, \, \geuc, \, e^{(B-x)/Y}\right)}
      \leq C \norm{f: C^{0,\beta}(T_X,\geuc)},
    \end{aligned}
  \end{equation}
where $x$ is the coordinate on the $[0,X\pi]$ factor of $T_X$.
\end{prop}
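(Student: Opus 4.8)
The plan is to solve the Poisson equation $\Delta_{\geuc} u = f$ on the strip $T_X$ with Dirichlet boundary conditions by separation of variables, exploiting the imposed $\Kgrphat_Y$-oddness to handle the unbounded $y$-direction, and to extract the $X$-independent Schauder bound \ref{torschauder} together with the exponential decay estimates. First I would reduce to the case where $f$ is smooth and compactly supported (in $x$, say $\text{supp}\, f \subset [A,B]\times\R$ with $1<A<B<X\pi-1$), since the general estimate then follows by approximation and the linearity/continuity of the solution operator. For such $f$, expand in the $x$-direction using the Dirichlet eigenfunctions $\sin(nx/X)$ of $\partial_x^2$ on $(0,X\pi)$, writing $f(x,y) = \sum_{n\ge1} f_n(y)\sin(nx/X)$ and $u(x,y) = \sum_{n\ge1} u_n(y)\sin(nx/X)$; then each $u_n$ solves the ODE $u_n'' - (n/X)^2 u_n = f_n$ on $\R$. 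The $\Kgrphat_Y$-oddness forces $f_n$ (hence $u_n$) to be odd about every $y = jY\pi$, so one solves this ODE on $[0,Y\pi]$ with Dirichlet conditions at both ends and extends by odd reflection; the resulting Green's function on $[0,Y\pi]$ has operator norm bounded uniformly in $n$ and $X$ because the relevant length scale is $Y\pi$, not $X$. This is the crux of the $X$-independence.

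Next I would assemble the $C^{2,\beta}$ bound. Rather than tracking the Fourier series quantitatively (which is awkward for Hölder norms), the cleaner route is: define $u := \Rtor_{X,Y} f$ by the above construction, observe it is a genuine classical solution with $u|_{\partial T_X} = 0$ and $u \in C^{2,\beta}_{\Kgrphat_Y}$, then prove the estimate by a barrier/maximum-principle argument for the $C^0$ bound followed by interior and boundary Schauder estimates for the derivative bounds. For the $C^0$ bound one uses that $f$ is odd about $y=0$ and $y=Y\pi$, so $u$ is as well, hence $u$ is determined by its restriction to the rectangle $(0,X\pi)\times(0,Y\pi)$ where it solves $\Delta u = f$ with zero Dirichlet data on all four sides; a standard comparison function of the form $c\,(\text{dist to }\partial)\cdot\|f\|_{C^0}$ adapted to the geometry of this rectangle (using only the $Y\pi$ dimension, since the solution cannot ``see'' farther than the nearest horizontal walls) gives $\|u\|_{C^0} \le C(Y)\|f\|_{C^0}$ independent of $X$. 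Then, because $(T_X,\geuc)$ has bounded geometry with a uniform size ball around every point, standard interior and boundary Schauder estimates upgrade this to $\|u\|_{C^{2,\beta}(T_X,\geuc)} \le C(\beta,Y)\big(\|u\|_{C^0} + \|f\|_{C^{0,\beta}}\big) \le C(\beta,Y)\|f\|_{C^{0,\beta}}$.

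For the decay estimates, suppose $f$ vanishes outside $[A,B]\times\R$. On the sub-strip $(0,A-1)\times\R$ the function $u$ is harmonic, vanishes on $x=0$, and is $\Kgrphat_Y$-odd, so again it is governed by the box $(0,A-1)\times(0,Y\pi)$ with homogeneous Dirichlet data on three sides and boundary values on $x=A-1$ controlled by $\|f\|_{C^{0,\beta}}$ via the already-established global bound. Separating variables on this box, the smallest decay rate of a harmonic function odd about $y=0$ and $y=Y\pi$ is realized by $\sinh((x-A+1)/Y)\sin(y/Y)$-type modes—more precisely the slowest mode decays like $e^{x/Y}$ as $x$ decreases—so one obtains $|u(x,y)| \le C e^{(x-A)/Y}\|f\|_{C^{0,\beta}}$, and the $C^{2,\beta}$ version follows from interior Schauder estimates applied on balls of fixed radius (shrinking the exponent constant harmlessly, or absorbing the loss into $C$). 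The estimate on $(B+1,X\pi)\times\R$ is identical after reflecting $x \mapsto X\pi - x$. The main obstacle is making the $X$-independence of all constants transparent; the key observation that dissolves it is that $\Kgrphat_Y$-oddness confines the relevant elliptic estimates to a fixed-width slab of height $Y\pi$, so no constant can depend on the (large) width $X$, and the exponential decay rate $1/Y$ is exactly the first Dirichlet eigenvalue's square root on an interval of length $Y\pi$.
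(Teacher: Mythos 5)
Your proposal is correct, but the route to the key $C^0$ estimate differs from the paper's. The paper expands $f$ and $u$ in a Fourier \emph{sine} series in the periodic $y$-direction (not in $x$), obtaining a family of ODEs $\ddot u_n - (n/Y)^2 u_n = f_n$ on $[0,X\pi]$, writes out the explicit Dirichlet Green's function $G_n$, bounds $\int_0^{X\pi}|G_n(x,x')|\,dx'$ by $Y^2/n^2$ (uniformly in $X$), and sums the series to get $\norm{u}_{C^0}\lesssim Y^2\norm{f}_{C^0}$; the explicit form $u_n|_{[0,A]}(x)=u_n(A)\sinh(nx/Y)/\sinh(nA/Y)$ then yields the decay estimates from the same computation. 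You instead prove the $C^0$ bound by a barrier $v(y)=\tfrac12\norm{f}_{C^0}\,y(Y\pi-y)$ on the fundamental box $(0,X\pi)\times(0,Y\pi)$ together with the maximum principle, which is more elementary and makes the $X$-independence transparent without any series-summation step; for the decay you in effect revert to the $y$-Fourier argument, as the paper does. Both approaches are valid, with the same Schauder upgrade. One small caveat: your opening attempt to expand in the $x$-direction using $\sin(nx/X)$ would not by itself deliver a $C^0$ bound, since the resulting Green's-function norms are only uniformly bounded (not summable) in $n$; you correctly abandon this in favor of the barrier, but the paper's $y$-expansion avoids the issue altogether by getting the $1/n^2$ factor from the potential term $(n/Y)^2$ in the ODE. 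You also leave somewhat implicit the passage from the single box $(0,X\pi)\times(0,Y\pi)$ to the full strip via $\Kgrphat_Y$-odd extension, and the fact that the weighted $C^{2,\beta}$ decay estimates require the weight to vary by a bounded factor over unit balls; both are routine but would need spelling out in a full write-up.
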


\begin{proof}
Define $\Rtor_{X,Y} f$ to be the Dirichlet solution $u$ to the Poisson equation
$\Delta_{g_E}u = f$.
Then $u \in C^{2,\beta}_{\Kgrphat_Y}(T_X,\geuc)$ and
for each $p \in T_X$
  \begin{equation}
    \norm{u}_{C^{2,\beta}(B_1(p),\geuc)} \leq C \left(
      \norm{u}_{C^0(B_2(p))} + \norm{f}_{C^{0,\beta}(B_2(p),\geuc)}\right),
  \end{equation}
where $B_r(p)$ is the intersection with $T_X$ of the Euclidean disc with
center $p$ and radius $r$. Defining
  \begin{equation}
    \begin{aligned}
      &f_n(x) := \sqrt{\frac{2}{Y\pi}} \int_0^{Y\pi} f(x,y) \sin \frac{n y}{Y} \, dy \mbox{ and} \\
      &u_n(x) := \sqrt{\frac{2}{Y\pi}} \int_0^{Y\pi} u(x,y) \sin \frac{n y}{Y} \, dy,
    \end{aligned}
  \end{equation}
for each positive integer $n$
we have
  \begin{equation}
    \ddot{u}_n(x) - \frac{n^2}{Y^2}u_n(x) = f_n(x),
  \end{equation}
for which equation one finds Dirichlet Green's function
  \begin{equation}
    G_n(x,x') = \frac{-Y}{n \sinh \frac{nX\pi}{Y}}
    \begin{cases}
      \sinh \frac{n(X\pi-x')}{Y} \sinh \frac{nx}{Y} \mbox{ if } x \leq x' \\
      \sinh \frac{nx'}{Y} \sinh \frac{n(X\pi-x)}{Y} \mbox{ if } x \geq x'.
    \end{cases}
  \end{equation}

Since
  \begin{equation}
    \begin{aligned}
      \frac{n^2}{Y^2} \sinh \frac{nX\pi}{Y} \int_0^{X\pi} \abs{G(x,x')} \, dx'
      &\leq 
        \sinh \frac{n(X\pi-x)}{Y} \cosh \frac{nx}{Y} - \sinh \frac{n(X\pi-x)}{Y} \\
        &\;\;\;\; + \sinh \frac{nx}{Y} \cosh \frac{n(X\pi-x)}{Y} - \sinh \frac{nx}{Y} \\
      &\leq
        \sinh \frac{nX\pi}{Y},
    \end{aligned}
  \end{equation}
we have
  \begin{equation}
  \label{modest}
    \norm{u_n}_{C^0([0,X\pi])} \leq \frac{Y^2}{n^2}\norm{f_n}_{C^0([0,X\pi])}
    \leq \frac{\sqrt{2Y^5\pi}}{n^2}\norm{f}_{C^0(T_X}),
  \end{equation}
and so
  \begin{equation}
    \norm{u}_{C^0(T_X)}
    \leq 
    \sum_{n=1}^\infty \sqrt{\frac{2}{Y\pi}}\abs{u_n}
    \lesssim
    Y^2\norm{f}_{C^0(T_X)},
  \end{equation}
which upgrades the local Schauder estimates above to the first inequality
asserted in the proposition.
If moreover $f$ vanishes outside $[A,B] \times \R$, then
  \begin{equation}
    \begin{aligned}
      &u_n|_{[0,A]}(x) = \frac{u_n(A) \sinh \frac{nx}{Y}}{\sinh \frac{nA}{Y}} \mbox{ and} \\
      &u_n|_{[B,X\pi]}(x) = 
        \frac{u_n(B) \sinh \frac{n(X\pi-x)}{Y}}{\sinh \frac{n(X\pi-B)}{Y}},
    \end{aligned}
  \end{equation}
establishing in conjunction with \ref{modest} the decay estimates.

\end{proof}

\subsection*{Global solutions}

The final task of this section is to apply
Propositions \ref{towsol} and \ref{torsol}
iteratively
on the extended standard regions 
to prove existence and obtain estimates
of global solutions to the equation $\Lcal u=f$ on each initial surface.

\begin{prop}
\label{Rcal}
Fix $\beta \in (0,1)$ and
data $(k,n_1,n_2,\sigma)$
or $(k,n,n'_1,n'_{-1},\sigma'_1,\sigma'_{-1})$ for an initial surface.
There is a positive integer $m_0$ such that for every $m \geq m_0$
and for every initial surface $\Sigma$ defined by
the corresponding data
there exists a linear map
  \begin{equation}
    \Rcal: C^{0,\beta}_{\Grp}(\Sigma,m^2g) \to C^{2,\beta}_{\Grp}(\Sigma,m^2g)
  \end{equation}
and there exists a constant $C>0$---independent of $m$---such that if
$f \in C^{0,\beta}_{\Grp}(\Sigma,m^2g)$, then $\Lcal \Rcal f = m^2f$ and
  \begin{equation}
    \norm{\Rcal f : C^{2,\beta}_{\Grp}(\Sigma, m^2g)}
    \leq
    C \norm{f : C^{0,\beta}_{\Grp}(\Sigma, m^2g)}.
  \end{equation}
\end{prop}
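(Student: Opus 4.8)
The plan is to construct $\Rcal$ by a standard patching-and-iteration argument, using the semilocal solution operators $\Rtowkn$ from Proposition~\ref{towsol} on the tower regions $S[C]$ and $\Rtor_{X,Y}$ from Proposition~\ref{torsol} on the toral regions $S[T]$, together with the comparison estimates of Propositions~\ref{stdtor} and~\ref{stdtow}. First I would fix, once and for all, a partition of unity subordinate to the cover $\Sigma=\bigcup_{C}S[C]\cup\bigcup_{T}S[T]$, built from the cutoff function $\Psi$ of \ref{Epsiab} in the coordinates $\xx$ adapted to each region (so that the cutoffs transition in a fixed band, say between $\xx=b$ and $\xx=a$, independently of $m$); one must take care to build these cutoffs $\Grp$-equivariantly, which is possible because the transition annuli are $\Grp$-invariant and the relevant coordinates intertwine the symmetries (recall \ref{phintertwine} and \ref{tower}). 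Given $f\in C^{0,\beta}_{\Grp}(\Sigma,m^2g)$, I would decompose $f=\sum f_C+\sum f_T$ using this partition of unity, transplant each piece to the model tower $\SkC$ or model strip $T_X$ via the identifications $\xC$, $\varpi_T$, solve there with $\Rtowkn$ or $\Rtor_{X,Y}$, cut the resulting solution off back into $S[C]$ or $S[T]$ with a slightly wider cutoff, and sum to obtain a first approximate global solution $\Rcal_0 f$. By Propositions~\ref{stdtor}--\ref{stdtow} the operators $m^{-2}\Lcal$ and the flat model operators $\Lhat$, $\Delta_{\geuc}$ agree up to an error of size $O(m^{-1/2})$ in the relevant weighted $C^{0,\beta}$ norms, and the commutator of $\Lcal$ with the cutoffs produces terms supported in the fixed transition bands, where the exponential decay built into Propositions~\ref{towsol} and~\ref{torsol} (the weight $e^{-m\,d}$ on the tori, the weight $e^{(x-A)/Y}$ on the strips) makes these commutator terms small—specifically $O(e^{-cm})$ plus $O(m^{-1/2})$—relative to $f$.

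Thus $m^{-2}\Lcal\,\Rcal_0 f=f+\Ecal f$ with $\norm{\Ecal f:C^{0,\beta}_{\Grp}(\Sigma,m^2g)}\le\frac12\norm{f:C^{0,\beta}_{\Grp}(\Sigma,m^2g)}$ once $m\ge m_0$ for $m_0$ large enough, the key point being that the bounds in Propositions~\ref{towsol}, \ref{torsol}, \ref{stdtor}, \ref{stdtow} are all uniform in $m$ (and, for the strips, in $X$, which is exactly why the $X$-independence in Proposition~\ref{torsol} was emphasized). One then sets $\Rcal f:=m^2\,\Rcal_0\sum_{j\ge0}(-\Ecal)^j f=m^2\,\Rcal_0(\mathrm{id}+\Ecal)^{-1}f$; the Neumann series converges in $C^{0,\beta}_{\Grp}(\Sigma,m^2g)$ by the contraction estimate, $\Rcal_0$ is bounded $C^{0,\beta}_{\Grp}\to C^{2,\beta}_{\Grp}$ with $m$-independent norm (again by the semilocal Schauder estimates), and a direct computation gives $\Lcal\,\Rcal f=m^2 f$ and $\norm{\Rcal f:C^{2,\beta}_{\Grp}(\Sigma,m^2g)}\le C\norm{f:C^{0,\beta}_{\Grp}(\Sigma,m^2g)}$ with $C$ independent of $m$. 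Equivariance is preserved throughout because the partition of unity, the model domains with their imposed symmetry groups $\Grphat$, $\Kgrphat_Y$, and the identifications all respect $\Grp$; the matching of $\Grp$ restricted to each region with the symmetry groups used in Propositions~\ref{towsol} and~\ref{torsol} is precisely what Assumption~\ref{sa} and the scaffolding choices were arranged to guarantee.

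The main obstacle—and the place where genuine care is needed rather than routine bookkeeping—is the interface between the tower regions and the toral regions: one must verify that the cutoff bands can be placed so that where a tower solution is being damped the error it generates is controlled by the $e^{-m\,d}$ weight of Proposition~\ref{stdtor}, while where a torus solution is being damped it is controlled by the $e^{(x-A)/Y}$ decay of Proposition~\ref{torsol}, and that these two weighted estimates are compatible on the overlap (the annulus $b<\xx<a+1$ where $S[C]$ and $S[T]$ meet). Concretely this requires choosing the truncation radii $s_C$ in \ref{SigmaC} and the constant $b$ so that each model solution produced on one region still lives, with room to spare, inside the next region, and checking that the half-plane graphs $H_{\kC}$ glue the tower asymptotics to the torus without introducing errors larger than $O(m^{-1/2})$; the decay rates from \ref{towerdecay} and the graph estimate \ref{fest} are exactly what make this work, but assembling them into a single clean contraction estimate uniform in $m$ is the technical heart of the proof.
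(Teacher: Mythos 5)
Your overall architecture (semilocal solution operators, cutoffs, iteration) matches the paper's, but the claim that the one-pass error operator $\Ecal$ satisfies $\norm{\Ecal}\le\frac12$ is false, and this is where the paper's proof has genuine content that your write-up misses. The problem is the commutator error coming from cutting off the tower solutions: you write that the ``exponential decay built into Propositions~\ref{towsol} and~\ref{torsol}'' controls the cutoff errors, but Proposition~\ref{towsol} provides \emph{no} decay estimate for $\Rtowkn$, only a global Schauder bound. (The exponential weight $e^{-m\,d}$ in Proposition~\ref{stdtor} measures how fast the initial surface's geometry approaches that of the flat torus; it says nothing about decay of solutions produced by $\Rtowkn$.) Consequently, the term $[\psi_{_C},m^{-2}\Lcal]\Rcal_C f_C$ supported in the cutoff band of $\psi_{_C}$ is merely $O(\norm{f})$—not $O(m^{-1/2})$ or exponentially small—so $\Ecal$ is not a contraction and the Neumann series as you set it up does not converge.

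The paper avoids this by a three-stage construction whose ordering is the key idea. One first solves on the toral regions $S[T]$ and collects both the leftover data and the $\psi_{_T}$-commutator error into $f_1$, which lives in the tower regions $S[C]$; one then solves $f_1$ there and collects the $\psi_{_C}$-commutator error $f_2$, which is supported \emph{deep inside} $S[T]$ (near $\rr\in[a-1,a]$, far from $\partial S[T]$); one then solves $f+f_2$ on $S[T]$ in place of $f$ alone. Because $f_2$ is supported far from $\partial S[T]$, the decay estimate of Proposition~\ref{torsol}—the \emph{only} place in either semilocal proposition where decay is actually available—applies, and the one uncancelled commutator $[\psi_{_T},m^{-2}\Lcal]\Rcal_T f_2$ is exponentially small. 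The resulting $\widetilde{\Rcal}$ then satisfies $\norm{f-m^{-2}\Lcal\widetilde{\Rcal}f}\le\bigl(C(b)m^{-1/2}+Ce^{-b}+C(b)e^{-a}\bigr)\norm{f}$, and one inverts $m^{-2}\Lcal\widetilde{\Rcal}$. In effect the paper is exhibiting the smallness of (a grouping of) $\Ecal^2$ rather than of $\Ecal$; your Neumann series could be salvaged by observing $\norm{\Ecal^2}\le\frac12$ rather than $\norm{\Ecal}\le\frac12$, but as written the contraction claim is wrong and the proof does not close without the asymmetric, decay-aware ordering that the paper makes explicit.
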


\begin{proof}
For $T \in \comptor(\Sigma)$ with boundary circles $C, D \in \intcirc(\Sigma)$,
set
  \begin{equation}
    \begin{aligned}
      &A_T
      :=
      \frac{m}{\pi}\left(d_{\gsph}(C,D)-\frac{b}{\mC}-\frac{b}{m_{_D}}\right) \mbox{ and} \\
      &B_T
      :=
      \begin{cases}
        1/k \mbox{ for $\Sigma$ of type $M$} \\
        1/2k \mbox{ for $\Sigma$ of type $N$}
      \end{cases}
    \end{aligned}
  \end{equation}
and define the
diffeomorphism (recalling definitions \ref{STdef} and \ref{TXdef})
  \begin{equation}
    X_{_T}: T_X / \left\langle \transe_{y\text{-axis}}^{2m\pi} \right\rangle \to S[T],
  \end{equation}

the cutoff function $\psi_{_T} \in C^\infty(S[T])$,
and the linear map
  \begin{equation}
    \Rcal_T: C_{\Grp}^{0,\beta}(S[T],m^2g) \to C_{\Grp}^{2,\beta}(S[T],m^2g)
  \end{equation}
by
  \begin{equation}
    \begin{aligned}
      &\Rcal_Tf := {X_{_T}^*}^{-1}\Rtor_{A_T,B_T} X_{_T}^* f, \\
      & \psi_{_T} :=
        \frac{1}{2}{X_{_T}^*}^{-1}\left(\cutoff{0}{1} \circ x
        + \cutoff{A_T}{A_T-1} \circ x \right), \mbox{ and } \\
      &X_{_T} := \varpi_T^{-1} \circ \kappa,
    \end{aligned}  
  \end{equation}
where
$\kappa: \left(T_X / \left\langle \transe_{y\text{-axis}}^{2m\pi} \right\rangle,\geuc\right)
\to (T_b, m^2\gsph)$
is any isometry
mapping $y=0$ to a scaffold circle on the torus containing $T_b$.

For $C \in \intcirc(\Sigma)$ recall the diffeomorphism
  \begin{equation}
    \xC: \SkC(a) \to S[C]
  \end{equation}
and define
the cutoff function $\psi_{_C} \in C^\infty(S[C])$
and the linear map
  \begin{equation}
    \Rcal_C: C_{\Grp,c}^{0,\beta}(\phiC(\SkC(b+1)),m^2g)
    \to C_{\Grp}^{2,\beta}(S[C],m^2g)
  \end{equation}
by
  \begin{equation}
    \begin{aligned}
      &\Rcal_C := \frac{m^2}{\mC^2} {\xC^*}^{-1}
        \RtowkmC \xC^* f \mbox{ and} \\
      &\psi_{_C}:={\xC^*}^{-1}
        \left(\cutoff{a}{a-1} \circ \rr \right).
    \end{aligned}
  \end{equation}

Next, given $f \in C^{0,\beta}_{\Grp}(\Sigma, m^2g)$, let
  \begin{equation}
    \begin{aligned}
      &f_1 := \left(1-\sum_{T \in \comptor(\Sigma)} \psi_{_T}\right)f
        + \sum_{T \in \comptor(\Sigma)} 
        [\psi_{_T},m^{-2}\Lcal]\Rcal_Tf|_{S[T]}, \\
      &f_2 := \sum_{C \in \intcirc(\Sigma)} [\psi_{_C},m^{-2}\Lcal]
        \Rcal_Cf_1|_{S[C]}, \mbox{ and} \\
      &\widetilde{\Rcal} f := \sum_{T \in \comptor(\Sigma)} \psi_T\Rcal_T (f+f_2)|_{S[T]}
        + \sum_{C \in \intcirc(\Sigma)} \psi_C\Rcal_C f_1|_{S[C]}.
    \end{aligned}
  \end{equation}

The idea behind the definition of $\widetilde{\Rcal}f$ is as follows.
First we construct approximate solutions on each toral region
(the $\Rcal_T f|_{S[T]}$ terms) and cut them off smoothly.
These solutions are only approximate since we have obtained them by applying
the solution operator for the model problem on the Euclidean strip,
and the resulting error is controlled by the deviation of the initial surface's
geometry from the model geometry.
Additional error, supported in the tower regions,
is created by cutting off the approximate solution with $\psi_{_T}$.
We know only that its size is controlled by that of the original $f$,
and we account for it in $f_1$
along with the restriction of the original $f$ to the tower regions,
where we next construct and cut off approximate solutions in a similar fashion.
Again there is error controlled by the geometry and also cutoff error,
for which we have no better bound than the norm of $f$
but which is supported inside the toral regions
far from their boundary,
so we can construct an approximate solution to correct for them
and apply the decay estimate in \ref{towsol}.

More precisely we now check that
  \begin{equation}
    \begin{aligned}
      f - m^{-2}\Lcal \widetilde{\Rcal}f =
      &\sum_{C \in \intcirc(\Sigma)}
        \left(
          \frac{\mC^2}{m^2}
          {\xC^*}^{-1}\left(\Delta_{\ghat_{_C}}+\abs{\Ahat_{_C}}^2\right)\xC^*
          -m^{-2}\Lcal
        \right)
        \Rcal_Cf_1|_{S[C]} \\
      &+\sum_{T \in \comptor(\Sigma)}
        \left( {X_{_T}^*}^{-1}\Delta_{\geuc}X_T^*-m^{-2}\Lcal \right)
        \Rcal_T
        \left(f+f_2\right)|_{S[T]} \\
      &+\sum_{T \in \comptor(\Sigma)}
        \left[ \psi_{_T}, m^{-2}\Lcal \right] \Rcal_T
        f_2|_{S[T]},
    \end{aligned}
  \end{equation}
so, noting that $f_2|_{S[T]}$ is supported far away from $\partial S[T]$,
we find from \ref{stdtor}, \ref{stdtow}, \ref{towsol}, and \ref{torsol}
  \begin{equation}
    \norm{f - m^{-2} \Lcal \widetilde{\Rcal} f}_{C^{0,\beta}(\Sigma,m^2g)}
    \leq 
    \left(
      C(b)m^{-1/2} + Ce^{-b} + C(b)e^{-a} 
    \right)
    \norm{f}_{C^{0,\beta}(\Sigma,m^2g)},
  \end{equation}
where $C(b)$ is a constant depending on $b$ but not on $m$
and where $C$ is a constant depending on neither $b$ nor $m$.
Thus we may at this stage fix $b$ (finally determining the extent of the
toral regions) sufficiently large in terms of $C$ and then take $m$ sufficiently large
in terms of $C(b)$ so as to ensure that $m^{-2}\Lcal \widetilde{\Rcal}$ is invertible.
The proof is then concluded by taking
$\Rcal = \widetilde{\Rcal}\left(m^{-2}\Lcal \widetilde{\Rcal}\right)^{-1}$.
\end{proof}

\section{The main theorem}
\label{nonlinear}

Recall that given an initial surface $\Sigma$ with defining embedding 
$X: \Sigma \to \Sph^3$ and a function $u: \Sigma \to \R$,
we have defined the map $X_u: \Sigma \to \Sph^3$
by $X_u(p) = \exp_{X(p)} u(p)\nu(p)$, $\exp$ being the exponential map
on $\Sph^3$ and $\nu$ a global unit normal for the initial surface.
For $u \in C^2_{loc}(\Sigma)$ sufficiently small $X_u$ is an immersion
with well-defined mean curvature $\Hcal[u]$ relative to the global unit normal
$\nu_u$ having positive inner product with the parallel translates
of $\nu$ along the geodesics
it generates. We now prove the main theorem by solving $\Hcal[u]=0$.

\begin{theorem}
\label{mainthm}
Given data (a) $(k,n_1,n_2,\sigma)$
or (b) $(k,n,n'_1,n'_{-1},\sigma'_1,\sigma'_{-1})$
for an initial surface (recalling \ref{initsdef}),
there exist $m_0>0$ and $C>0$ such that 
whenever $m>m_0$,
the initial embedding
$X: \Sigma \to \Sph^3$
corresponding to the data 
can be perturbed to a minimal embedding
$X_u: \Sigma \to \Sph^3$
by a function 
$u \in C_{\Grp}^\infty(\Sigma)$
(depending on $m$)
that satisfies the estimate $\norm{u: C^2(\Sigma,m^2g)} \leq Cm^{-3/2}$.
Here $\Grp$ is either (a) $\Grp_{k,m}$ or (b) $\Grp'_{k,m}$ (recalling \ref{symm}).
In particular $X_u(\Sigma)$ has the same genus as $\Sigma$ (see \ref{initsprop}),
is invariant under $\Grp$,
and contains the scaffolding (a) $\Ckm$ or (b) $\Ckmp$ (recalling \ref{scaff}).
Moreover, in the complement in $\Sph^3$ of any tubular neighborhood of the circles of intersection
of the initial configuration (a) $\Wcal_k$ or (b) $\Wcal'_k$, for $m$ sufficiently large $X_u(\Sigma)$
is the graph over some subset of the initial configuration
of a smooth function converging smoothly to $0$ as $m \to \infty$.
\end{theorem}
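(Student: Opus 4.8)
The plan is to solve $\Hcal[u]=0$ by a fixed-point argument built on the bounded right inverse $\Rcal$ of Proposition \ref{Rcal}. Writing $\Hcal[u]=\Hcal[0]+\Lcal u+\Qcal[u]$, where $\Qcal[u]:=\Hcal[u]-\Hcal[0]-\Lcal u$ collects the quadratic and higher-order terms, and using that $\Lcal\Rcal f=m^2f$, the equation $\Hcal[u]=0$ is equivalent to the fixed-point equation $u=\mathcal J u:=-\Rcal\bigl(m^{-2}\Hcal[0]+m^{-2}\Qcal[u]\bigr)$ on the Banach space $C^{2,\beta}_{\Grp}(\Sigma,m^2g)$ for a fixed $\beta\in(0,1)$; the map $\mathcal J$ sends this space to itself since $\Rcal$ does. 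First I would record the size of the error term: Propositions \ref{stdtor}(iii) and \ref{stdtow}(iii) give $\norm{m^{-2}\Hcal[0]:C^{0,\beta}_{\Grp}(\Sigma,m^2g)}\le Cm^{-3/2}$ with $C$ independent of $m$, hence $\norm{\Rcal\bigl(m^{-2}\Hcal[0]\bigr):C^{2,\beta}_{\Grp}(\Sigma,m^2g)}\le Cm^{-3/2}$.

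Next I would establish the nonlinear estimate. By Propositions \ref{stdtor} and \ref{stdtow}, in the rescaled metric $m^2g$ each initial surface is, on every extended standard region, uniformly $C^\ell$-close (with bounds independent of $m$, for every $\ell$) to a fixed Euclidean model, namely a standard Karcher--Scherk tower or a plane; consequently the mean-curvature operator $\Hcal$, expressed in these coordinates, has first, second, and third derivatives at small argument bounded uniformly in $m$. From this one obtains $\delta>0$ and $C>0$, both independent of $m$, such that for $\norm{u_i:C^{2,\beta}(\Sigma,m^2g)}\le\delta$ one has
\[
  \norm{m^{-2}\bigl(\Qcal[u_1]-\Qcal[u_2]\bigr):C^{0,\beta}(\Sigma,m^2g)}
  \le C\bigl(\norm{u_1}+\norm{u_2}\bigr)\,\norm{u_1-u_2},
\]
all norms on the right being $C^{2,\beta}(\Sigma,m^2g)$-norms (so in particular $\norm{m^{-2}\Qcal[u]}\le C\norm{u}^2$, as $\Qcal[0]=0$). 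Combining the two bounds, for $m$ large $\mathcal J$ maps the closed ball of radius $2Cm^{-3/2}$ in $C^{2,\beta}_{\Grp}(\Sigma,m^2g)$ into itself and is a contraction there, so it has a unique fixed point $u$ in that ball; this $u$ solves $\Hcal[u]=0$ and satisfies $\norm{u:C^{2,\beta}(\Sigma,m^2g)}\le Cm^{-3/2}$, hence $\norm{u:C^2(\Sigma,m^2g)}\le Cm^{-3/2}$. Elliptic regularity applied to the nonlinear equation $\Hcal[u]=0$ upgrades $u$ to $C^\infty(\Sigma)$, and $u\in C^\infty_{\Grp}(\Sigma)$ because every Picard iterate, hence the limit, lies in the $\Grp$-equivariant subspace.

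Finally I would read off the geometric conclusions. The $C^2(\Sigma,m^2g)$-smallness of $u$, together with the uniform (in $m$) two-sided bounds on the injectivity radius of $(\Sigma,m^2g)$ and on the normal exponential map supplied by Propositions \ref{stdtor} and \ref{stdtow}, shows that $X_u$ is an embedding for $m$ large; then $X_u(\Sigma)$ is diffeomorphic to $\Sigma$, so has the genus computed in Proposition \ref{initsprop}, and it is $\Grp$-invariant since $u$ is $\Grp$-equivariant. Each scaffold circle lies in $\Sigma$ (Proposition \ref{initsprop}), and the reflection through it lies in $\Grp$ by definition of $\Grp_{k,m}$ (resp.\ $\Grp'_{k,m}$) and is an odd symmetry of $\Sigma$, because rotation by $\pi$ about a geodesic lying on a minimal surface reverses orientation of the surface while preserving that of $\Sph^3$ and so reverses $\nu$; since $u$ is $\Grp$-odd it therefore vanishes identically on that circle, so $X_u$ fixes the scaffolding pointwise and $X_u(\Sigma)$ contains $\Ckm$ in case (a) and $\Ckmp$ in case (b). For the graphical claim, fix a tubular neighborhood of the intersection circles; on its complement Proposition \ref{stdtor} exhibits $\Sigma$ itself as a normal graph over a subset of $\Wcal_k$ (resp.\ $\Wcal'_k$) of a function converging to $0$ in $C^\infty$ as $m\to\infty$, while on this fixed-size region $\Hcal[u]=0$ is uniformly elliptic with coefficients converging to those of the minimal-surface equation on the configuration, so from $\norm{u:C^0}=O(m^{-3/2})\to0$ interior Schauder estimates yield $\norm{u:C^\ell}\to0$ on slightly smaller regions for every $\ell$; adding $u$ to the graph of $\Sigma$ then keeps $X_u(\Sigma)$ a graph over the configuration of a function tending smoothly to $0$.

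I expect the main obstacle to be the second step, the $m$-uniform quadratic estimate for $\Qcal$, where one must genuinely exploit the uniform control of the rescaled ambient geometry and of the second fundamental form provided by Propositions \ref{stdtor} and \ref{stdtow}; by contrast the essential analytic difficulty of the whole construction, namely triviality of the Jacobi kernel on the towers under the imposed symmetries and the resulting $m$-uniform right inverse, has already been handled in Propositions \ref{nokernel}, \ref{hemisphere}, and \ref{Rcal}.
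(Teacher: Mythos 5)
Your overall architecture is the same as the paper's: estimate $\Hcal[0]$ via Propositions \ref{stdtor} and \ref{stdtow}, apply the right inverse $\Rcal$ from Proposition \ref{Rcal} to get the first-order correction, then handle the nonlinear remainder $\Qcal$ by a fixed-point argument. The conclusions about symmetry, the scaffolding, and graphicality are also read off in essentially the paper's way.

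There is, however, one real difference in route and one genuine error in your estimates. The paper uses the \emph{Schauder} fixed-point theorem, not the Banach contraction mapping: it takes the ball $B$ in $C^{2,2\beta}$ with $\beta\in(0,1/2)$, shows $F(v)=-m^{-2}\Rcal\Qcal[u_0+v]$ maps $B$ into itself, and gets a fixed point from compactness of the embedding $C^{2,2\beta}\hookrightarrow C^{2,\beta}$ together with continuity of $F$. This deliberately avoids proving a Lipschitz bound for $\Qcal$ — only a one-sided size bound is needed. Your contraction approach requires the quadratic Lipschitz estimate, which is more work but also yields uniqueness in the ball; it is a valid alternative.

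The error is in the scaling of your quadratic estimate. You assert
\[
  \norm{m^{-2}\bigl(\Qcal[u_1]-\Qcal[u_2]\bigr):C^{0,\beta}(\Sigma,m^2g)}
  \le C\bigl(\norm{u_1}+\norm{u_2}\bigr)\norm{u_1-u_2},
\]
valid for $\norm{u_i:C^{2,\beta}(\Sigma,m^2g)}\le\delta$ with $C,\delta$ independent of $m$. This is wrong by a factor of $m$. The operator whose Fr\'echet derivatives are uniformly controlled in $(\Sigma,m^2g)\subset(\Sph^3,m^2\gsph)$ is $\Hcal_{m^2\gsph}$, and the relevant displacement there is $mu$ (since $\nu$ has length $m$ in $m^2\gsph$). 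Using $\Hcal[u]=m\,\Hcal_{m^2\gsph}[mu]$, which gives $m^{-2}\Qcal[u]=m^{-1}\Qcal_{m^2\gsph}[mu]$, the correct bound is
\[
  \norm{m^{-2}\bigl(\Qcal[u_1]-\Qcal[u_2]\bigr)}
  \le C\,m\bigl(\norm{u_1}+\norm{u_2}\bigr)\norm{u_1-u_2}
  \quad\text{for}\quad m\norm{u_i}\le\delta,
\]
so the domain of uniform validity shrinks like $\delta/m$, not $\delta$. As it happens this does not break your argument: your ball has radius $O(m^{-3/2})\ll\delta/m$, and the Lipschitz constant becomes $O(m\cdot m^{-3/2})=O(m^{-1/2})\to0$, so $\mathcal J$ is still a contraction for $m$ large. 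But you should correct the exponent; as written the claim does not follow from the cited propositions, and in a less favorable construction (e.g.\ with $\norm{u_0}$ only $O(m^{-1})$) the missing factor of $m$ would be fatal.
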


\begin{proof}
Fix $\beta \in (0,1/2)$.
By \ref{stdtor} and \ref{stdtow} the initial mean curvature satisfies
\begin{equation}
  m^{-2}\norm{\Hcal[0] : C^{2,2\beta}(\Sigma,m^2g)} \leq Cm^{-3/2}
\end{equation}
for a constant $C$ independent of $m$. Setting
  \begin{equation}
    u_0 = -\Rcal m^{-2}\Hcal[0],
  \end{equation}
then \ref{Rcal} implies that
  \begin{equation}
    \norm{u_0 : C^{2,2\beta}(\Sigma,m^2g)} \leq Cm^{-3/2}
  \end{equation}
for a (possibly different) constant $C$ independent of $m$.
The function $u_0$ represents the first-order correction to the initial surface.
To complete the perturbation we need to estimate the nonlinear part of $\Hcal$ near $0$,
defined by
  \begin{equation}
    \Qcal[u] := \Hcal[u] - \Hcal[0] - \Lcal u.
  \end{equation}

To proceed efficiently we consider the blown-up metric $m^2\gsph$ on $\Sph^3$.
Given $u: \Sigma \to \R$ we can define
$X_{u,m^2\gsph}: \Sigma \to \Sph^3$ by
$X_{u,m^2\gsph}(p) := \exp_{X(p)}^{m^2\gsph} u(p)\nu_{m^2\gsph}(p)$,
where $\exp^{m^2\gsph}$ is the exponential map on $(\Sph^3,m^2\gsph)$
and $\nu_{m^2\gsph}$ is the $m^2\gsph$ unit normal for $\Sigma$
parallel to $\nu$;
of course $\exp^{m^2\gsph}=\exp$, $\nu_{m^2\gsph} = m^{-1}\nu$, and
$X_u = X_{mu,m^2\gsph}$. For $u \in C^2_{loc}$ sufficiently small
we can define also $\Hcal_{m^2\gsph}[u]$ to be the mean curvature
of $X_{u,m^2\gsph}$ relative to $m^2\gsph$ (and $m^{-1}\nu_u$). Obviously
  \begin{equation}
    \Hcal[u] = m\Hcal_{m^2\gsph}[mu],
  \end{equation}
so
  \begin{equation}
    m^{-2}\Qcal[u]
    =
    m^{-2}\int_0^1 \int_0^t \frac{d^2}{ds^2}\Hcal[su] \, ds \, dt
    = m^{-1} \int_0^1 \int_0^t \frac{d^2}{ds^2}\Hcal_{m^2g_S}[smu] \, ds \, dt.
  \end{equation}
Now, if $mu \in C^2_{loc}(\Sigma)$ is sufficiently small in terms of the Riemannian
curvature of $(\Sph^3,m^2\gsph)$ and the second fundamental form of $X$
relative to $m^2\gsph$, then $X_u$ will be an immersion, 
$\Hcal[u]$ will be well-defined, and moreover
  \begin{equation}
    \sup_{s \in [0,1]} \norm{\frac{d^2}{ds^2}\Hcal_{m^2g_S}[smu] :
      C^{0,2\beta}(\Sigma,m^2g)}
    \leq C\norm{mu: C^{2,2\beta}(\Sigma,m^2g)}^2,
  \end{equation}
where $C$ is a constant controlled by finitely many covariant derivatives 
of the Riemannian curvature of the ambient space $(\Sph^3,m^2\gsph)$ 
and finitely many covariant derivatives
of the second fundamental form of $X$ relative to $m^2\gsph$. Of course
the Riemannian curvature of $(\Sph^3,m^2\gsph)$ is bounded uniformly in $m$
(tending to $0$ in fact) and all of its derivatives vanish; while $X$ itself
depends on $m$, each derivative of its second fundamental form, relative to $m^2\gsph$,
is bounded independently of $m$.

Consequently, if $B$ is the closed ball of radius $m^{-7/4}$ in 
$C^{2,2\beta}(\Sigma,m^2g)$ and $v \in B$, we have
  \begin{equation}
    \norm{m^{-2} \Qcal[u_0+v]: C^{2,2\beta}(\Sigma,m^2g)} \leq Cm^{-2}.
  \end{equation}
Evidently then, taking $m$ large enough, $F(v)=-m^{-2}\Rcal \Qcal[u_0+v]$ defines a map
$F: B \to B$ which is continuous with respect to the $C^{2,\beta}(\Sigma,m^2g)$
norm on $B$ as well as the $C^{2,2\beta}(\Sigma,m^2g)$ norm, so by the
Schauder fixed point theorem admits a fixed point $v_0 \in B$.
Accordingly $\Lcal v_0 = -\Qcal[u_0+v_0]$ and
  \begin{equation}
    \Hcal[u_0+v_0] = \Hcal[0] + \Lcal u_0 + \Lcal v_0 + \Qcal[u_0+v_0] = 0.
  \end{equation}
The higher regularity of $u=u_0+v_0$ then follows immediately,
and the $C^0$ decay estimate of $mu$ ensures embeddedness.
\end{proof}

\section{Further results and discussion} 
\label{further}

\subsection*{Highly symmetric constructions with obstructions}

In this subsection we briefly outline a highly symmetric construction where the symmetry imposed is not so great that there are no obstructions. 
The obstruction space is nontrivial but of finite dimension independent of the symmetries and the genus of the surfaces constructed. 
The construction can easily be explained in terms of the earlier presentation:
the initial configuration used is $\Wcal'_k$  and the symmetry group imposed is $\Grp_{k,m}$ (and not $\Grp'_{k,m}$). 
This corresponds to using the scaffolding $\Ckm\subset\Wcal_k$. 
The towers desingularizing $C_1$ and $C_2$ are then symmetric enough
that they carry no kernel. 
The construction in this respect can proceed as the earlier one in \ref{mainthm}. 
On the other hand the towers desingularizing the circles $C'_j\subset\T'\cap\T_j$ 
are classical Scherk singly periodic surfaces and the symmetries imposed fix $\T_j$ but not $\T'$. 
This situation is similar to many recent constructions \cite{Nguyen,KKM,KapLi}  
where there is enough symmetry to simplify the obstruction space in comparison to 
the more general situation in \cite{KapJDG}, 
but not enough to render it trivial as in \ref{mainthm}. 

More precisely we have a
two-dimensional kernel, one dimension for each circle of 
intersection $C'_1$ and $C'_2$. 
(Note that modulo the symmetries these are the only circles of intersection besides $C_1$ and $C_2$). 
There are no circles in the scaffolding contained in $\T'$ and therefore $\T'$ is not held fixed 
by the construction. 
We introduce then two continuous parameters in the construction, $x_1$ and $x_2$. 
$C'_1$ is replaced by a parallel copy on $\T_1$ at (signed) distance $x_1$ and 
similarly $C'_2$ 
is replaced by a parallel copy on $\T_2$ at (signed) distance $x_2$.  
By the symmetries then all $C'_j$ are appropriately replaced also. 
$\T'$ is a union of annuli with boundaries the $C'_j$.
These are replaced then by minimal graphs so that the new annuli span the $C'_j$'s. 
This way $\T'$ is replaced by a new torus with derivative discontinuities along its circles 
of intersection with the $\T_j$'s. 
The construction of the initial surfaces then proceeds as usual by using towers appropriately. 

Note that modulo the symmetries there are four circles which get desingularized: 
$C_1,C_2$ and the (perturbed to new positions) $C'_1,C'_2$. 
Following the same conventions as in Section \ref{Sinit} we denote by $n_1,n_2,n'_1,n'_2$ the 
number of half periods the desingularizing towers will have 
between successive circles of reflection in $\Ckm$ along $C_1,C_2,C'_1,C'_2$ respectively.  
This together with three alignment parameters $(\sigma,\sigma'_1,\sigma'_2)$ 
and the continuous parameters $x_1$ and $x_2$ 
determine the initial surfaces. 
We have the following.

\begin{theorem}
\label{thm-2}
Given data $(k,n_1,n_2,n'_1,n'_2, \sigma,\sigma'_1,\sigma'_2)$ 
for an initial surface as outlined above 
there exists $m_0>0$ such that 
whenever $m>m_0$,
one of the initial surfaces (for some appropriate values of $x_1,x_2$) 
described above
can be perturbed to a minimal surface which contains $\Ckm$, 
is symmetric under the action of $\Grp_{k,m}$, 
and has genus 
$k(k-1)m(n_1+n_2) + k^2m(n'_1+n'_{2}) +  1$. 
Moreover as $m\to\infty$ the minimal surfaces converge as varifolds to $\Wcal'_k$.   
\end{theorem}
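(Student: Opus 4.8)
The plan is to follow the scheme of Theorem \ref{mainthm}, modified in the by-now-standard fashion (compare \cite{KapJDG,Nguyen,KKM,KapLi}) to accommodate a finite-dimensional substitute kernel together with matching parameters. Fix admissible discrete data $(k,n_1,n_2,n'_1,n'_2,\sigma,\sigma'_1,\sigma'_2)$ and write $\Grp:=\Grp_{k,m}$. For each pair $x=(x_1,x_2)$ in a small box $I_m\times I_m$ about the origin---whose side length one may take to tend to $0$ with $m$---I would build an initial surface $\Sigma_x$ depending smoothly on $x$ as follows: replace $C'_1$ by the great circle parallel to it on $\T_1$ at signed distance $x_1$, replace $C'_2$ by the parallel copy on $\T_2$ at signed distance $x_2$, and propagate these replacements by $\Grp$ to all the $C'_j$; solve the minimal surface Dirichlet problem on each of the annuli into which the circles $C'_j(x_j)$ now cut $\T'$ (the flat sub-annuli of $\T'$ are stable and nondegenerate, so for $x$ small these solutions exist, are $\Grp$-equivariant by uniqueness, depend smoothly on $x$, and converge smoothly to the flat annuli as $x\to0$); and finally desingularize $C_1,C_2$ by maximally symmetric order-$k$ Karcher--Scherk towers and each $C'_j(x_j)$ by a classical ($k=2$) Scherk tower, precisely as in Definition \ref{initsdef}, at the scales dictated by $m,k,n_1,n_2,n'_1,n'_2$ and aligned according to $\sigma,\sigma'_1,\sigma'_2$.

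Next I would record the geometry of $\Sigma_x$. Because the $\T_j$ and the perturbed annuli of $\T'$ are all minimal, the mean curvature of $\Sigma_x$ is again supported in the tower regions, and Propositions \ref{stdtor} and \ref{stdtow} go over essentially verbatim---uniformly for $x\in I_m\times I_m$ and with smooth dependence on $x$---giving $m^{-2}\norm{\Hcal[0]:C^{2,2\beta}(\Sigma_x,m^2g)}\le Cm^{-3/2}$. The one genuinely new ingredient is the behaviour, as a function of $x_i$, of the mismatch between the conormal of the minimal annulus along $C'_i(x_i)$ and the orthogonal intersection posited by the classical Scherk model. By the smooth dependence of the minimal annuli on their boundary data this mismatch is a smooth function of $x_i$, vanishing at $x_i=0$ and---crucially---with \emph{nonvanishing} first derivative there: displacing the boundary circle by order $\epsilon$ tilts the annulus' conormal to first order in $\epsilon$. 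This transversality is the key input for the balancing below.

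I would then analyse the linearized operator with a two-dimensional substitute kernel. Adapting Propositions \ref{towsol}, \ref{nokernel} and Lemma \ref{hemisphere} to the classical Scherk tower carrying only the symmetries induced by $\Grp$---which, in contrast to the proof of Theorem \ref{mainthm}, does not include $\rot_{C'_i}^\pi$---one finds that the Jacobi operator of such a tower, on the relevant equivariant space, has a one-dimensional kernel, spanned by the translation-type Jacobi field that displaces the tower along $\T_i$ (this field is $\Grp$-equivariant but is killed by $\rot_{C'_i}^\pi$). Fix a smooth cutoff $w_i\in C^\infty_{\Grp}(\Sigma_x)$, $i=1,2$, of this Jacobi field, supported near the Scherk tower along $C'_i$ and its $\Grp$-images, and set $\mathcal{K}=\langle w_1,w_2\rangle$. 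Running the iteration of Proposition \ref{Rcal} verbatim over the extended standard regions then produces, for all $m$ large and all $x\in I_m\times I_m$, a bounded linear map
  \begin{equation*}
    \Rcal:\ C^{0,\beta}_{\Grp}(\Sigma_x,m^2g)\ \to\ C^{2,\beta}_{\Grp}(\Sigma_x,m^2g)\oplus\mathcal{K},
    \qquad \Rcal f=(u,\,\mu_1w_1+\mu_2w_2),
  \end{equation*}
with $\Lcal u=m^2f+\mu_1w_1+\mu_2w_2$ and $\norm{u}+\abs{\mu}\le C\norm{f}$, the constant $C$ independent of $m$ and of $x$.

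Finally I would run the nonlinear argument and then balance. Exactly as in Theorem \ref{mainthm}, a Schauder fixed-point argument in a ball of radius of order $m^{-7/4}$ in $C^{2,2\beta}_{\Grp}(\Sigma_x,m^2g)$ yields, for each $x$, a function $u=u(x)\in C^\infty_{\Grp}(\Sigma_x)$ with $\norm{u:C^2(\Sigma_x,m^2g)}\le Cm^{-3/2}$ and $\Hcal[u]=\mu_1(x)w_1+\mu_2(x)w_2$ for real numbers $\mu_i(x)$ depending continuously on $x$. It remains to find $x^\ast$ with $\mu_1(x^\ast)=\mu_2(x^\ast)=0$. Pairing $\Hcal[u]=\sum_i\mu_i w_i$ against $w_j$ and invoking the transversality of the second paragraph, one obtains an expansion $\mu_i(x)=a\,x_i+b_i+E_i(x)$ in which $a\ne0$ is independent of $m$ while $b_i$ (the intrinsic imbalance of the model) and $E_i(x)$ (the nonlinear correction) are small compared with the side length of $I_m$; hence $x\mapsto(\mu_1(x),\mu_2(x))$ is a small perturbation of a nonsingular linear map on $I_m\times I_m$ and, by a degree argument (or the intermediate value theorem applied one coordinate at a time), has a zero $x^\ast$. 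The surface $X_{u(x^\ast)}(\Sigma_{x^\ast})$ is then closed, embedded (by the $C^0$ smallness of $mu$), minimal, $\Grp$-invariant, and contains $\Ckm$ by construction; its genus follows from the handle count of Proposition \ref{initsprop}, with $k-1$ handles per additional period of a tower along $C_1$ or $C_2$ and one per additional period of a Scherk tower along a $C'_j$, for a total of $k(k-1)m(n_1+n_2)+k^2m(n'_1+n'_2)+1$; and the varifold convergence to $\Wcal'_k$ as $m\to\infty$ follows since $x^\ast\to0$, the minimal annuli converge smoothly to the flat annuli exhausting $\T'$, the tower regions occupy total area $O(m^{-1})\to0$, and $u(x^\ast)\to0$ smoothly away from the intersection circles. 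The step I expect to be the main obstacle is exactly the non-degeneracy behind this balancing---establishing that a displacement of $C'_i$ induces a conormal mismatch pairing with $w_i$ with a nonzero, $m$-independent coefficient $a$, so that the obstruction map has nonzero degree---together with the precise identification of the one-dimensional Scherk-tower kernel (the adaptation of Lemma \ref{hemisphere}) and the verification that the constants in the analogue of Proposition \ref{Rcal} stay uniform in $x$ as well as in $m$.
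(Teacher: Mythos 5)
The paper's own proof of this theorem is a one-sentence deferral: it states that the argument combines Theorem~\ref{mainthm} with the balancing scheme of \cite{Nguyen,KKM,KapLi}, with details to appear elsewhere, while the parameters $x_1,x_2$, the minimal-graph modification of $\T'$, and the two-dimensional kernel are only motivated informally in the surrounding discussion of Section~\ref{further}. Your proposal carries out precisely that plan in the expected way: a two-parameter family $\Sigma_x$ of initial surfaces built by sliding $C'_1,C'_2$ along $\T_1,\T_2$ and respanning the resulting boundary data by minimal annuli, a two-dimensional substitute kernel $\langle w_1,w_2\rangle$ supported on the Scherk towers over $C'_1,C'_2$ and their $\Grp_{k,m}$-images, a solution operator and fixed-point argument producing $\Hcal[u]=\mu_1 w_1+\mu_2 w_2$, and a degree argument choosing $x^\ast$ with $\mu(x^\ast)=0$, followed by the genus count of Proposition~\ref{initsprop} and the varifold-convergence argument. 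Your identification of the surviving Jacobi field on the Scherk tower at $C'_i$ as the translation within $\T_i$ is correct: it is odd under the reflections through $\T_i$-scaffold circles but even under $\rot_{C'_i}^\pi$ (equivalently under reflection through a $\T'$-scaffold circle such as a $C''_\psi$), and $\Grp_{k,m}$ contains neither of the latter, which is exactly how the kernel arises when passing from $\Grp'_{k,m}$ to $\Grp_{k,m}$. You have also correctly isolated the substantive work that the paper defers — the kernel count on the classical Scherk tower under the reduced symmetry (the analogue of Lemma~\ref{hemisphere} and Proposition~\ref{nokernel}), the uniformity of the solution-operator estimates in $x$ as well as $m$, and above all the $m$-independent nondegeneracy coefficient $a$ in the expansion $\mu_i=a\,x_i+b_i+E_i$ — and I see no gap in your outline beyond that acknowledged work. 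One phrasing should be corrected: the displaced circles $C'_j(x_j)$ lie on $\T_j$, not on $\T'$, so they do not literally ``cut'' $\T'$ into annuli; what is meant, as in the paper's discussion, is that the flat annuli into which the original $C'_j$ divide $\T'$ are replaced by minimal graphs over those annuli spanning the displaced circles.
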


\begin{proof}
The proof combines the arguments for \ref{mainthm} 
with the arguments for constructions like in \cite{Nguyen,KKM,KapLi}.   
Details will be presented elsewhere. 
\end{proof}

\subsection*{Corollaries of a general desingularization theorem} 

In this subsection we discuss corollaries in our setting 
of a general desingularization theorem announced in \cite[Theorem F]{KapClay} and \cite[Theorem 3.1]{KapSchoen}. 
The statement of this theorem is motivated in \cite[section 14]{KapClay}, 
and its proof is outlined in detail in \cite[sections 5-8]{KapSchoen} 
and will be presented in detail in \cite{Kap:compact}. 
We will refer to this theorem in the rest of the discussion as the ``general theorem''. 
The general theorem applies to situations where the intersection curves are transverse and have double points only, 
because the corresponding general construction is understood only when classical Scherk surfaces are used 
to model the desingularizing regions in the vicinity of the intersection curves. 
Therefore we can only consider the cases where the initial configurations in our setting are 
$\Wcal_2$ or $\Wcal_2'$ (recall \ref{E:Wk}) excluding the possibility $k\ge3$. 

Recall that in the first case we have two Clifford tori $\T_1$ and $\T_2$ intersecting orthogonally along 
two totally orthogonal circles $C_1$ and $C_2$. 
In the second case we have three pairwise orthogonal Clifford tori
$\T_1,\T_2,\T'$ with 
six intersection circles $C_1$, $C_2$ , $C'_1$, $C'_2$, 
$C'_3={C'_1}^\perp$, and $C'_4={C'_2}^\perp$, 
where we also have 
$\T_1=\T[C'_2]=\T[C'_4]$, 
$\T_2=\T[C'_3]=\T[C'_1]$, 
$\T'=\T[C_1]=\T[C_2]$, 
$\T_1\cap\T_2=C_1\cup C_2$, 
$\T_1\cap\T'=C'_1\cup C'_3$, 
and 
$\T_2\cap\T'=C'_2\cup C'_4$, 
as follows from \ref{E:Cp}, \ref{E:Cpperp}, and \ref{E:Cpinter} with $k=2$. 
Following the general theorem we define 
$$
\Cunder:=C_1\cup C_2,
\qquad
\Cunder':=C_1\cup C_2 \cup C'_1 \cup C'_2 \cup C'_3 \cup C'_4, 
$$
and 
$\What_2$ or $\What_2'$ (recall \ref{E:Wk}) the abstract surfaces with connected 
components the closures of the connected components of $\Wcal_2\setminus\Cunder$ or 
$\Wcal'_2\setminus\Cunder'$.

Recall now that by the discussion of the Clifford tori in section \ref{initial}, 
any Clifford torus $\T$ is covered isometrically by $\C$ with deck transformations generated by 
$z\to z+\sqrt2 \,\pi$ 
and 
$z\to z+\sqrt2 \,\pi i$.  
The linearized operator for the mean curvature is 
$
\Lcal=\Delta+4
$, 
which clearly has a four dimensional kernel with basis 
$$
\{ \, 
\sin\sqrt2 x \, \sin\sqrt2 y \, , \, 
\sin\sqrt2 x \, \cos\sqrt2 y \, , \, 
\cos\sqrt2 x \, \sin\sqrt2 y \, , \, 
\cos\sqrt2 x \, \cos\sqrt2 y 
\, \},  
$$
where $z=x+iy$ are the standard coordinates on $\C$. 
An alternative basis is given by 
$$
\{ \, 
\sin \sqrt2 (x\pm y) \, , \, 
\cos \sqrt2 (x\pm y) 
\, \}.  
$$

The existence of kernel means that the general theorem cannot be applied unless 
we impose enough symmetry to ensure that the kernel modulo the symmetries becomes trivial. 
To impose these symmetries we consider the scaffolding $\Cmin\subset \Wcal_2$ 
defined by $\Cmin:=C_{0,0} \cup C_{0,\pi/2}$
(recall \ref{E:Cphph})
and the corresponding group $\Grp_{min} := \Grefl(\mathcal{C}_{min}) \subset O(4)$. 
It is easy to calculate then that 
\begin{equation} 
\label{Gmin}
\Grp_{min}= \left\{I_{\Sph^3}, 
      \rot_{C_{0,0}}^\pi, \, 
      \rot_{C_{0,\pi/2}}^\pi, \, 
      \rot_{C_1}^{\pi} \right\}. 
\end{equation} 
Note that for $m\ge1$ we have $\Grp_{min}\subset\Grp_{2,m}$, $\Grp_{min}\subset\Grp'_{2,m}$,  
$\Cmin\subset\mathcal{C}_{2,m}$, and $\Cmin\subset\mathcal{C}'_{2,m}$.  

If we impose $\Grp_{min}$ as the group of symmetries of the construction,
then 
$C_{0,0}\cup C_1\subset\T_1$ has to be contained in the nodal lines of any eigenfunction 
allowed by the symmetries on $\T_1$. 
$\T_1$ this way is subdivided into two flat squares of side length $\pi$. 
The eigenvalues for the Laplacian on each square with Dirichlet boundary data are of the form
$j_1^2+j_2^2$ with $j_1,j_2\in\Z_{>0}$. 
$4$ is not included then. 
Working similarly on $\T_2$
we conclude that there is no kernel modulo the symmetries on $\Wcal_2$. 
We have also to check that there is no kernel on $\What_2$. 
In this case we have to impose an extra Dirichlet condition on $C_2$, 
and then $\T_1$ (or $\T_2$) is subdivided into four flat rectangles of sides 
$\pi$ by $\pi/2$ and the eigenvalues allowed are $j_1^2+ 4j_2^2$
with $j_1,j_2 \in \Z_{>0}$,
and so $4$ is again not included. 

We study now the case of $\Wcal'_2$. 
First we check that there is no kernel on $\T'$. 
Because of the symmetry 
$\rot_{C_1}^{\pi}$ we can assume that we are working on a rectangular (instead of a square) flat torus 
with sides of length $\sqrt2 \pi$ and $\pi/\sqrt2$. 
The eigenvalues of the Laplacian then are $8j_1^2+2j_2^2$ 
with $j_1,j_2 \in \Z_{\ge0}$,
which do not include $4$. 
To check that there is no kernel on $\What'_2$ note first 
that on $\T_1$ Dirichlet conditions are imposed on $C_1$, $C_2$, $C_{0,0}$, $C'_1$, and $C'_3$. 
This subdivides $\T_1$ into eight flat rectangles of sides $\pi/4$ by $\pi$ where the Laplacian 
with Dirichlet conditions on the boundary have eigenvalues 
$16 j_1^2+j_2^2$ with $j_1,j_2\in\Z_{>0}$. 
Similarly for $\T_2$ so it remains only to check $\T'$. 
This has Dirichlet conditions imposed on $C'_1$, $C'_2$, $C'_3$, and  $C'_4$. 
$\T'$ is then subdivided into four flat cylindrical annuli of width $\pi/4$ and so without even 
using the symmetries we have that the smallest eigenvalue is $16$ so that $4$ is again not included. 
Applying then the 
general desingularization theorem announced in \cite[Theorem F]{KapClay} and \cite[Theorem 3.1]{KapSchoen} 
we have the following as a corollary. 

\begin{theorem}
\label{thm-3}
$\Wcal_2$ can be desingularized to produce embedded closed minimal surfaces in $\Sph^3$ symmetric under $\Grp_{min}$
of genus $n_1+n_2+1$, where the towers desingularizing $C_1$ and $C_2$ include $n_1$ and $n_2$ periods respectively, 
provided $n_1$ and $n_2$ are large enough in absolute terms. 
As $n_1,n_2\to\infty$ the minimal surfaces tend to $\Wcal_2$. 

Similarly $\Wcal'_2$ can be desingularized 
to produce embedded closed minimal surfaces in $\Sph^3$ symmetric under $\Grp_{min}$
of genus $n_1+n_2+2n'_1+2n'_2+1$, 
where the towers desingularizing $C_1$ and $C_2$ include $n_1$ and $n_2$ periods respectively, 
the towers desingularizing $C'_1$ and $C'_3$ include $n'_1$ periods,  
and the towers desingularizing $C'_2$ and $C'_4$ include $n'_2$ periods,  
provided $n_1,n_2,n'_1,n'_2$ are large enough in absolute terms. 
As $n_1,n_2,n'_1,n'_2\to\infty$ the minimal surfaces tend to $\Wcal'_2$. 
\end{theorem}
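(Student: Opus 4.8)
The plan is to deduce Theorem~\ref{thm-3} as a direct application of the general desingularization theorem of \cite[Theorem F]{KapClay} and \cite[Theorem 3.1]{KapSchoen}, so essentially all of the work has already been done in the discussion preceding the statement: we need only verify the hypotheses of the general theorem in the two cases at hand. First I would recall precisely what the general theorem demands as input: a configuration $\mathcal{W}$ of minimal surfaces in a Riemannian three-manifold whose intersection set consists of transversely intersecting curves with only double points; a symmetry group $\Grp$; and the condition that the linearized operator (the Jacobi operator $\Delta+|A|^2+\mathrm{Ric}(\nu,\nu)$) have trivial kernel both on the configuration $\mathcal{W}$ itself and on the ``cut-open'' surface $\widehat{\mathcal{W}}$ obtained by severing $\mathcal{W}$ along the intersection curves (with Dirichlet conditions there), all modulo the imposed symmetries. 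For $\Wcal_2$ the intersection curves $C_1,C_2$ are orthogonal great circles meeting only where they do not—in fact they are disjoint—so the double-point transversality hypothesis holds; for $\Wcal'_2$ one checks from \ref{E:Cp}, \ref{E:Cpperp}, and \ref{E:Cpinter} with $k=2$ that the six intersection circles meet pairwise transversely with at worst double points along them, again meeting the hypothesis (this is exactly why $k\ge3$ must be excluded, since then the Karcher-Scherk towers of order $k$ rather than classical Scherk surfaces are required).

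Next I would assemble the no-kernel verification exactly as laid out above. On each Clifford torus, pulled back isometrically from $\C$, the Jacobi operator is $\Lcal=\Delta+4$, with the four-dimensional kernel spanned by $\{\sin\sqrt2(x\pm y),\cos\sqrt2(x\pm y)\}$; imposing $\Grp_{min}$ forces the relevant scaffold circles to lie in the nodal set of any admissible eigenfunction, which subdivides the torus into flat rectangles (or, on $\T'$ in the $\Wcal'_2$ case, cylindrical annuli) whose Dirichlet spectra, computed explicitly, are seen to omit $4$. Concretely: for $\Wcal_2$, $\T_1$ splits into two $\pi\times\pi$ squares with eigenvalues $j_1^2+j_2^2$, $j_i\ge1$; for $\widehat{\mathcal{W}}_2$ the extra Dirichlet condition on $C_2$ yields $\pi\times\pi/2$ rectangles with eigenvalues $j_1^2+4j_2^2$; for $\T'$ in the $\Wcal'_2$ case the symmetry $\rot_{C_1}^\pi$ reduces to a $\sqrt2\pi\times\pi/\sqrt2$ rectangular torus with eigenvalues $8j_1^2+2j_2^2$; and for $\widehat{\mathcal{W}}'_2$ one gets $\pi/4\times\pi$ rectangles (eigenvalues $16j_1^2+j_2^2$) on $\T_1,\T_2$ and four $\pi/4$-wide flat annuli on $\T'$ whose smallest eigenvalue is $16$. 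In every instance $4$ is not attained, so the kernel, modulo $\Grp_{min}$, is trivial. Having verified transversality, the double-point condition, and the triviality of the kernel modulo symmetries, I would then invoke the general theorem to obtain, for all sufficiently large period counts, embedded closed minimal surfaces symmetric under $\Grp_{min}$; the genus formula follows from the same Euler-characteristic bookkeeping used for \ref{initsprop}.(i)--(ii) (each period of a tower along $C_1$ or $C_2$ contributes one handle in the genus-2 case, and similarly for the $C'_j$), and the varifold convergence to $\Wcal_2$ or $\Wcal'_2$ as the period counts tend to infinity is part of the conclusion of the general theorem.

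The only genuine content beyond citing the general theorem is the spectral bookkeeping, and the main obstacle—such as it is—is making sure the scaffolding $\Cmin$ really does cut each torus into pieces whose Dirichlet spectra are controlled, which requires correctly tracking, via the identifications in Section~\ref{initial}, exactly which intersection circles become Dirichlet boundaries on which torus after cutting along $\Cunder$ or $\Cunder'$. This is bookkeeping rather than analysis, but it must be done carefully for all three tori in the $\Wcal'_2$ case; once that is in hand the proof is essentially immediate. Since the detailed proof of the general theorem itself is deferred to \cite{Kap:compact}, the present argument is appropriately brief.
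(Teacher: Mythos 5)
Your proposal is correct and follows essentially the same route as the paper: the paper's ``proof'' of Theorem~\ref{thm-3} is a single line invoking the general desingularization theorem of \cite{KapClay} and \cite{KapSchoen}, with all the substance (the explicit spectral checks on each flat piece obtained by cutting the Clifford tori along the scaffolding and intersection circles, and the transversality/double-point verification that forces $k=2$) carried out in the surrounding discussion, and you reproduce exactly that discussion. The only minor stylistic point is that the double-point hypothesis is not about $C_1$ and $C_2$ being disjoint as curves but about only two sheets of surface meeting along each intersection circle, which you do note correctly when you observe that $k\ge3$ would require Karcher-Scherk towers of higher order.
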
 

Note that the main difference of this result compared with the earlier ones is the small symmetry 
imposed and and that the (still large) number of periods along each circle can be prescribed independently on each circle 
(except for the identifications by the symmetries of $C'_1$ with $C'_3$ and $C'_2$ with $C'_4$),  
as opposed to requiring that all numbers have a large common divisor $m$.

\begin{bibdiv}
\begin{biblist}

\bib{Brendle}{article}{
  title={Embedded minimal tori in and the Lawson conjecture},
  author={Brendle, Simon},
  journal={Acta Mathematica},
  volume={2},
  number={211},
  pages={177--190},
  year={2013}
}

\bib{CS}{article}{
  author={Choe, J.}
  author={Soret, M.}
  title={New minimal surfaces in $S^3$ desingularizing the Clifford tori},
  journal={Mathematische Annalen},
  volume={364},
  number={3-4},
  pages={763--776},
  year={2016},
}

\bib{Fra}{article}{
	title={On the Fundamental Group of a Compact Minimal Submanifold}
	author={Frankel, T.}
	journal={Annals of Mathematics}
	volume={83}
	date={1966}
	pages={68--73}
}

\bib{JS}{article}{
  title={Some variational problems of minimal surface type. II.
         Boundary value problems for the minimal surface equation}
  author={Jenkins, H.}
  author={Serrin, J.}
  journal={Archive for Rational Mechanics and Analysis}
  volume={21}
  date={1966}
  pages={321--342}
}

\bib{kapouleas:annals}{article}{
AUTHOR = {Kapouleas, Nikolaos},
TITLE = {Complete constant mean curvature surfaces in {E}uclidean
three-space},
JOURNAL = {Ann. of Math. (2)},
FJOURNAL = {Annals of Mathematics. Second Series},
VOLUME = {131},
YEAR = {1990},
NUMBER = {2},
PAGES = {239--330},
ISSN = {0003-486X},
CODEN = {ANMAAH},
MRCLASS = {53A10 (49R10 58G30)},
MRNUMBER = {1043269},
MRREVIEWER = {Rob Kusner},
DOI = {10.2307/1971494},
URL = {http://dx.doi.org/10.2307/1971494},
}

\bib{KapWe}{article}{
AUTHOR = {Kapouleas, Nikolaos},
TITLE = {Constant mean curvature surfaces constructed by fusing {W}ente
tori},
JOURNAL = {Invent. Math.},
FJOURNAL = {Inventiones Mathematicae},
VOLUME = {119},
YEAR = {1995},
NUMBER = {3},
PAGES = {443--518},
ISSN = {0020-9910},
CODEN = {INVMBH},
MRCLASS = {53A10},
MRNUMBER = {1317648},
MRREVIEWER = {Nathan Smale},
DOI = {10.1007/BF01245190},
URL = {http://dx.doi.org/10.1007/BF01245190},
}

\bib{KapJDG}{article}{
	title={Complete Embedded Minimal Surfaces of Finite Total Curvature}
AUTHOR = {Kapouleas, Nikolaos},
	journal={Journal of Differential Geometry}
	volume={45}
	date={1997}
	pages={95--169}
}

\bib{KapClay}{article}{
	title={Constructions of minimal surfaces by gluing minimal immersions}
AUTHOR = {Kapouleas, Nikolaos},
	conference={
	  title={Global Theory of Minimal Surfaces}
	}
	book={
	  series={Clay Mathematics Proceedings}
	  volume={2}
	  publisher={American Mathematical Society}
	  address={Providence, RI}
	  date={2005}
	}
	pages={489--524}
	review={\MR{2167274}}
}

\bib{KapSchoen}{incollection}{
AUTHOR = {Kapouleas, Nikolaos},
TITLE = {Doubling and desingularization constructions for minimal
surfaces},
BOOKTITLE = {Surveys in geometric analysis and relativity, Adv. Lect. Math. (ALM), {20}, Int. Press, Somerville, MA},
SERIES = {Adv. Lect. Math. (ALM)},
VOLUME = {20},
PAGES = {281--325},
PUBLISHER = {Int. Press, Somerville, MA},
YEAR = {2011},
MRCLASS = {53A10},
MRNUMBER = {2906930},
MRREVIEWER = {Christine Breiner},
}

\bib{KapI}{article}{
	Author = {Kapouleas, Nikolaos},
JOURNAL = {J. Differential Geom. (to appear); arXiv:1409.0226},
FJOURNAL = {Journal of Differential Geometry},
	Date-Added = {2015-03-04 20:04:48 +0000},
	Date-Modified = {2015-03-04 20:05:48 +0000},
	Title = {{M}inimal {S}urfaces in the {R}ound {T}hree-sphere by {D}oubling the {E}quatorial {T}wo-sphere, {I}},
	Year = {2014}}

\bib{Kap:compact}{article}{
     TITLE = {A general desingularization theorem for minimal surfaces in the compact case},
    AUTHOR = {Kapouleas, Nikolaos},
    note = {In preparation}
}

\bib{KKM}{article}{
	Author = {N. Kapouleas and S. J. Kleene and N. M. Moller}, 
	Note = {arXiv:1106.5454; J. Reine Angew. Math. (to appear)}, 
	Title = {Mean curvature self-shrinkers of high genus: non-compact examples}, 
	Year = {2012}}

\bib{KapLi}{article}{
     TITLE = {Free Boundary Minimal Surfaces in the Unit Three-Ball via Desingularization of the Critical Catenoid and the Equatorial Disk}, 
    AUTHOR = {Kapouleas, Nikolaos}, 
    AUTHOR = {Li, Martin Man-chun}, 
    journal = {In preparation}
}

\bib{KY}{article}{
	title={Minimal surfaces in the three-sphere by doubling the Clifford torus}
	author={Kapouleas, N.}
	author={Yang, S.D.}
	journal={American Journal of Mathematics}
	volume={132}
	date={2010}
	pages={257--295}
}

\bib{Kar}{article}{
	title={Embedded Minimal Surfaces Derived from Scherk's Examples}
	author={Karcher, H.}
	journal={Manuscripta Mathematica}
	volume={62}
	date={1988}
	pages={83--114}
}

\bib{KarT}{article}{
  title={Construction of minimal surfaces}
  author={Karcher, H.}
  pages={1--96}
  book={
    title={Surveys in Geometry}
    date={1989}
    publisher={University of Tokyo}
  }
}

\bib{KPS}{article}{
  title={New minimal surfaces in $S^3$}
  author={Karcher, H.} 
  author={Pinkall, U.}
  author={Sterling, I.}
  journal={Journal of Differential Geometry}
  volume={28}
  date={1988}
  pages={169--185}
}

\bib{Lawson}{article}{
  title={Complete minimal surfaces in $S^3$}
  author={Lawson, H.B., Jr.}
  journal={Annals of Mathematics}
  date={1970}
  volume={92}
  pages={335--374}
 }

\bib{MN}{article}{
  title={Min-max theory and the Willmore conjecture}
  author={Marques, F.C.}
  author={Neves, A.}
  journal={Annals of Mathematics}
  date={2014}
  volume={179}
  number={2}
  pages={683--782}
}

\bib{Nguyen}{article}{
	Author = {Nguyen, Xuan Hien}, 
	Doi = {10.1215/00127094-2795108}, 
	Fjournal = {Duke Mathematical Journal}, 
	Issn = {0012-7094}, 
	Journal = {Duke Math. J.}, 
	Mrclass = {53C44}, 
	Mrnumber = {3263027}, 
	Mrreviewer = {Robert Haslhofer}, 
	Number = {11}, 
	Pages = {2023--2056}, 
	Title = {Construction of complete embedded self-similar surfaces under mean curvature flow, {P}art {III}}, 
	Url = {http://dx.doi.org/10.1215/00127094-2795108}, 
	Volume = {163}, 
	Year = {2014}, 
	Bdsk-Url-1 = {http://dx.doi.org/10.1215/00127094-2795108}} 

\bib{MR}{article}{
  title={Schr\"{o}dinger operators associated to a holomorphic map},
  author={Montiel, S.}
  author={Ros, A.},
  booktitle={Global differential geometry and global analysis},
  pages={147--174},
  year={1991},
  publisher={Springer}
}
	
\bib{perez2007classification}{article}{
  title={The classification of singly periodic minimal surfaces with
genus zero and Scherk-type ends},
  author={P{\'e}rez, Joaqu{\'\i}n},  
  author={Traizet, Martin},
  journal={Transactions of the American Mathematical Society},
  volume={359},
  number={3},
  pages={965--990},
  year={2007}
}

\bib{PR}{article}{
  title={Equivariant minimax and minimal surfaces in geometric three-manifolds}
  author={Pitts, J.T.}
  author={Rubinstein, J.H.}
  journal={Bulletin of the American Mathematical Society}
  volume={19}
  number={1}
  date={1988}
  pages={303--309}  
}

\bib{Scherk}{article}{
  title={Bemerkungen \"{u}ber die kleinste Fl\"{a}che innherhalb
         gegebener Grenzen}
  author={Scherk, H.F.}
  journal={Journal f\"{u}r die reine und angewandte Mathematik}
  volume={13}
  date={1835}
  pages={185--208}
}

\bib{schoen}{article}{
AUTHOR = {Schoen, Richard M.},
TITLE = {The existence of weak solutions with prescribed singular
behavior for a conformally invariant scalar equation},
JOURNAL = {Comm. Pure Appl. Math.},
FJOURNAL = {Communications on Pure and Applied Mathematics},
VOLUME = {41},
YEAR = {1988},
NUMBER = {3},
PAGES = {317--392},
ISSN = {0010-3640},
CODEN = {CPAMA},
MRCLASS = {58G30 (35D05 35J60)},
MRNUMBER = {929283},
MRREVIEWER = {Jean-Pierre Ezin},
DOI = {10.1002/cpa.3160410305},
URL = {http://dx.doi.org/10.1002/cpa.3160410305},
}

\bib{Tr}{article}{
    AUTHOR = {Traizet, Martin},
     TITLE = {Construction de surfaces minimales en recollant des surfaces
              de {S}cherk},
   JOURNAL = {Ann. Inst. Fourier (Grenoble)},
  FJOURNAL = {Universit\'e de Grenoble. Annales de l'Institut Fourier},
    VOLUME = {46},
      YEAR = {1996},
    NUMBER = {5},
     PAGES = {1385--1442},
      ISSN = {0373-0956},
     CODEN = {AIFUA7},
   MRCLASS = {53A10},
  MRNUMBER = {1427131 (98g:53012)},
MRREVIEWER = {M. Elisa G. G. de Oliveira},
       URL = {http://www.numdam.org/item?id=AIF_1996__46_5_1385_0},
}

\bib{Wiy}{article}{
  title={Minimal surfaces in the 3-sphere by stacking Clifford tori}
  author={Wiygul, D.}
  journal={preprint}
  eprint={arXiv:1502.07420}
}

\end{biblist}
\end{bibdiv}

\end{document}